\documentclass[onefignum,onetabnum]{siamart251216}

\usepackage{lipsum}
\usepackage{amsfonts}
\usepackage{graphicx}
\usepackage{epstopdf}
\usepackage{algorithmic}
\usepackage{enumitem}
\usepackage{dsfont}

\ifpdf
  \DeclareGraphicsExtensions{.eps,.pdf,.png,.jpg}
\else
  \DeclareGraphicsExtensions{.eps}
\fi

\newsiamremark{remark}{Remark}
\newsiamremark{hypothesis}{Hypothesis}
\crefname{hypothesis}{Hypothesis}{Hypotheses}
\newsiamthm{claim}{Claim}
\newsiamthm{assumption}{Assumption}
\newsiamremark{fact}{Fact}
\crefname{fact}{Fact}{Facts}

\newcommand{\dist}{\mathrm{dist}}
\newcommand{\sgn}{\mathrm{sgn}}

\makeatletter
\newcommand{\oset}[3][0ex]{%
  \mathrel{\mathop{#3}\limits^{
    \vbox to#1{\kern-2\ex@
    \hbox{$\scriptstyle#2$}\vss}}}}
\makeatother

\newcommand{\weakly}{\rightharpoonup}
\newcommand{\weaklystar}{\stackrel\star\rightharpoonup}

\newcommand{\supp}{\mathrm{supp}}
\newcommand{\diam}{\mathrm{diam}}

\newcommand{\dd}{\mathrm{d}}

\newcommand{\N}{\mathbb{N}}

\newcommand{\R}{\mathbb{R}}
\newcommand{\Z}{\mathbb{Z}}

\newcommand{\DD}{\mathcal{D}}

\newcommand{\II}{\mathcal{I}}

\newcommand{\KK}{\mathcal{K}}
\newcommand{\LL}{\mathcal{L}}
\newcommand{\MM}{\mathcal{M}}

\newcommand{\ZZ}{\mathcal{Z}}

\usepackage{xcolor}
\definecolor{darkgreen}{rgb}{0,0.5,0}

\headers{Differentiability for 
bilateral parabolic EVIs}
{S. Blomeyer and C. Christof}

\title{Directional differentiability for the solution map of the bilateral parabolic obstacle problem\thanks{Submitted to the editors DATE.
}}

\author{%
Simon Blomeyer\thanks{Technische Universit\"{a}t Darmstadt, Department of Mathematics, Dolivostraße 15, 64293 Darmstadt, Germany,  
\email{blomeyer@mathematik.tu-darmstadt.de}, \email{christof@mathematik.tu-darmstadt.de}.}
\and  
Constantin Christof\footnotemark[2]
}

\usepackage{amsopn}

\begin{document}

\maketitle

\begin{abstract}
We prove that the solution map of the 
classical bilateral parabolic obstacle problem 
is directionally differentiable when interpreted 
as a function between suitable Lebesgue 
and Sobolev spaces. 
To the best of our knowledge, this paper is 
the first to establish this kind of differentiability 
for a parabolic evolution
variational inequality with two-sided fully 
distributed pointwise inequality constraints. 
Our analysis relies on 
known differentiability results for 
unilateral parabolic
obstacle problems
and a new
theorem on the directional differentiability 
of solution operators of 
non-expansive parameterized 
fixed-point equations. The latter may also be 
of interest
for other applications. 
Our results can be used, for example, 
to derive optimality conditions for 
optimal control problems governed by bilateral 
parabolic obstacle problems. 
\end{abstract}

\begin{keywords}
directional differentiability, 
parabolic obstacle problem, 
bilateral constraint, 
fixed-point equation, 
implicit function,
optimality conditions, 
stationarity system,
optimal control
\end{keywords}

\begin{MSCcodes}
35K85, 35R35, 49J40, 49K40, 58E35 
\end{MSCcodes}

\section{Introduction}
This paper is concerned with the 
differentiability properties of the 
solution map $S_\psi^\phi\colon u \mapsto y$
of the classical bilateral parabolic obstacle problem
\begin{equation}
\label{eq:bilateral_obst_intro}
\left \{
~
\begin{aligned}
&\text{Find } y \in \KK_\psi^\phi \cap H^1(H^{-1})
\text{ such that } y(0) = y_0 \text{ and }
\\
&\int_0^T \left \langle \partial_t y  - \Delta y - u, v - y\right  \rangle_{H_0^1(\Omega)}\mathrm{d}t \geq 0 
~\quad\forall v \in \KK_\psi^\phi.
\end{aligned}
\right.
\end{equation}
Here, $\Omega \subset \R^d$,
$d \in \{1,2,3\}$, is a bounded domain that is convex or of class $C^{1,1}$, 
$T>0$ is a fixed final time,
$y_0$ 
is a given 
initial value, 
and the admissible set $\KK_\psi^\phi$
is defined by 
$\smash{\KK_\psi^\phi := \{ v \in L^2(H_0^1) \mid 
\psi \leq v \leq \phi \text{ a.e.\ in } (0,T) \times \Omega\}}$
with two sufficiently smooth 
obstacle functions $\psi$ and $\phi$. 
(See Assumption \ref{ass:standing:obstacle} for the precise setting
and \cref{sec:2} for the definitions of the involved function spaces, dual pairings, derivatives, etc.) 
The main result of this work---\cref{th:main}---establishes that the solution map 
$S_\psi^\phi\colon u \mapsto y$ of \eqref{eq:bilateral_obst_intro} is directionally 
differentiable when considered as a function between suitable 
Lebesgue and Sobolev spaces.
It relies on a new theorem on the directional 
differentiability of solution operators of parameterized 
non-expansive fixed-point equations 
in duals of separable normed spaces 
that we derive in an abstract setting and 
that is also interesting on its own; see \cref{th:abstract_dir_diff}. The differentiability results 
that we obtain  for \eqref{eq:bilateral_obst_intro} 
open up the possibility 
to derive 
optimality conditions of Bouligand- and strong-stationarity-type  for optimal control problems governed by 
evolution variational inequalities (EVIs) 
of the form
\eqref{eq:bilateral_obst_intro};
cf.\ \cite[sect.~5]{Christof2019}. 

Before we begin with our analysis, let us 
give some background: While directional differentiability 
results for the elliptic obstacle problem 
have been known since the nineteen-seventies \cite{Haraux1977,Mignot1976}, 
it was an open question 
for many years whether 
the solution map of the parabolic obstacle problem
is directionally differentiable or not. 
See, e.g., 
the comments on this topic in 
\cite[Remarque 3]{MignotPuel1984:2},
\cite[sect.~4]{MignotPuel1984}, and \cite[sect.~7.5]{BonnansShapiro2000}.
The main difficulty in deriving 
differentiability results for 
solution maps of parabolic EVIs of the type
\eqref{eq:bilateral_obst_intro} is that it 
is not possible 
to control the time derivatives of the 
difference quotients of the solution map
$\smash{S_\psi^\phi\colon u \mapsto y}$ in 
suitable function spaces. Because of this effect, 
accumulation points of difference quotients 
of $S_\psi^\phi$ can be functions that are discontinuous
with respect to (w.r.t.) time---in contrast to the solutions of 
\eqref{eq:bilateral_obst_intro} which are 
continuous---and the machinery 
that was developed for the elliptic case in 
\cite{Haraux1977,Mignot1976}
becomes inapplicable; see the 
counterexamples in \cite[sect.~3]{Christof2019}
and \cite[Example 4.1]{Brokate2021}. 
(We remark that, if the 
constraints in $\KK_\psi^\phi$ are only imposed on the
boundary $\partial \Omega$, then it is possible
to transfer elements of the elliptic approach to the parabolic setting;
see \cite{Jarusek2003}.) 
Only quite recently, it has been 
demonstrated in \cite{Christof2019}
that directional differentiability results
for parabolic obstacle-type EVIs can be 
derived along different lines, namely,
by exploiting pointwise-a.e.\ 
convexity and concavity properties;
see \cref{lem:dir_diff_uni} below and 
also \cite{Brokate2015} where a similar idea has been used for scalar hysteresis operators. 
The main drawback of this approach is that 
it works only for problems 
with unilateral constraints,
i.e., EVIs \eqref{eq:bilateral_obst_intro}
with $\psi \equiv - \infty$ or $\phi \equiv \infty$.
The derivation of directional differentiability results 
for the parabolic case with two-sided 
pointwise constraints in $(0,T) \times \Omega$ 
has thus remained an open problem.

In the present paper, 
we show how such results can be obtained. 
The main idea of our analysis
is to identify the multiplier map 
of the bilateral parabolic obstacle problem \eqref{eq:bilateral_obst_intro}
(see \cref{def:sol_ops}) with an implicit 
function that solves a system of equations 
which involves the multiplier maps of the 
unilateral EVIs associated with the 
obstacles $\psi$ and $\phi$; see \cref{lem:random_lem25}. 
By exploiting that the 
latter multiplier maps are 
directionally differentiable, 
satisfy a special Lipschitz estimate, 
and can be extended to 
spaces of Radon measures, we are then able 
to obtain directional differentiability 
for the bilateral case by means of a general 
result on the differentiability properties of 
solution operators of parameterized 
fixed-point equations; 
see \cref{th:abstract_dir_diff} and \cref{prop:dir_diff}. 
We remark that, in the elliptic setting, 
a similar idea has recently been used 
in \cite{ChristofWachsmuth2025} to show 
that $H_0^1(\Omega)$-projections 
onto sets with pointwise bounds are 
Newton differentiable. 
The analysis for the parabolic case, however, 
differs quite significantly from that in 
 \cite{ChristofWachsmuth2025}
 as one cannot work in a Hilbert 
 space setting when studying  \eqref{eq:bilateral_obst_intro}
 and, as a consequence, cannot invoke the  
 results on angled subspaces 
 derived in  \cite[sect.~5]{ChristofWachsmuth2025}.

 What is quite remarkable is that 
 the argumentation that we use to prove the directional
 differentiability of the solution map
 $S_\psi^\phi\colon u \mapsto y$ of \eqref{eq:bilateral_obst_intro} 
actually exploits the presence of the 
 time derivative in \eqref{eq:bilateral_obst_intro}
 and, thus, relies 
 on precisely those terms that 
 are the most problematic when trying to argue along the classical lines of 
 \cite{Haraux1977,Mignot1976}.
 The main observation is that, 
 when deriving Lipschitz 
estimates for multiplier maps by 
 ``testing'' \eqref{eq:bilateral_obst_intro} 
 with mollified signum functions, 
 the time derivative produces extra terms 
 that result in stronger non-expansiveness properties;
 see \cref{lem:mult_contractive}. 
 These properties are 
 the key for the application of 
 our general results 
 on the 
 directional differentiability 
 of solution maps of
 parameterized fixed-point equations. 
 We remark that, 
 along similar lines, 
 it might also be possible to 
 tackle the sensitivity analysis 
 of other parabolic nonsmooth systems.
 We leave this topic  for future research. 
 
 As already mentioned,
 the main application area for the 
 differentiability results 
 derived in this paper is the field of 
 optimal control of parabolic EVIs.
 We expect that, 
 with \cref{th:main} at hand, 
 it is quite straightforward to 
 derive 
 Bouligand-type 
 optimality conditions
 and 
 stationarity systems 
 involving complementarity conditions 
 similar to those in
 \cite{MignotPuel1984}
 for 
 optimization problems 
 constrained by \eqref{eq:bilateral_obst_intro};
 see the analysis for the unilateral
 case in  
 \cite[sect.~5]{Christof2019}.
To maintain the focus of this paper, a detailed discussion of this topic is deferred to future work.
 We conclude this introduction with a short
 overview of the structure of the remainder of the paper:
 
 In \cref{sec:2}, we discuss basic concepts and introduce the used notation. 
 \Cref{sec:3} is concerned with 
 the derivation of the abstract 
 directional differentiability 
 result for 
 non-expansive fixed-point equations 
 that our analysis of \eqref{eq:bilateral_obst_intro}
 is based on. See 
 \cref{th:abstract_dir_diff}
 for the main result of this section.
 \Cref{sec:4} collects preliminaries 
 on the parabolic obstacle problem 
 \eqref{eq:bilateral_obst_intro} that are 
 required for our approach. 
 In \cref{sec:5}, we establish 
 that \eqref{eq:bilateral_obst_intro}
 is indeed covered by the abstract theory
 of \cref{sec:3} and, by 
 invoking \cref{th:abstract_dir_diff},
 arrive at the main result of this 
 paper---\cref{th:main}.

\section{Notation}
\label{sec:2}
In this paper, we use the standard symbols 
$\N$, $\N_0$, $\Z$, and $\R$ for the natural numbers, 
the nonnegative integers, the
integers, and the real numbers, respectively. 
Norms and inner products on real vector spaces
are denoted by 
$\|\cdot\|$ and $(\cdot, \cdot)$,
respectively,
with a subscript that clarifies which space we refer to. 
If $X$ and $Y$ are normed spaces, then
$\LL(X,Y)$ is the space of linear and 
continuous functions from $X$ to $Y$, endowed 
with its canonical norm. 
In the special case $Y= \R$, 
we write 
$X^* := \LL(X,\R)$ for the topological dual space of $X$
and define $\langle x^*, x\rangle_X := x^*(x)$
for all $x^* \in X^*$, $x \in X$. 
For the modes of strong, weak, and weak-star 
convergence, we use the arrows
$\to$, $\weakly$, and $\weaklystar$, respectively.
The closed ball of radius $r > 0$ 
in a normed space $X$ with center $x \in X$ is denoted
by $B_r^X(x)$. 
If $X = \R^d$ for some $d \in \N$ and 
$\|\cdot\|_X$ is the Euclidean norm $|\cdot|$,
then we simply write $B_r(x)$. 
For arbitrary subsets $V_1, V_2$ of a normed space $X$, 
we define $\diam(V_1)$ to be the diameter of $V_1$,
$\partial V_1$ to be the boundary of $V_1$,
$\bar V_1$ to be the closure of $V_1$,
$\mathds{1}_{V_1} \colon X \to \{0,1\}$ to be the 
indicator function of $V_1$, 
and $\dist(V_1, V_2)$ to be the distance between
$V_1$ and $V_2$. 
If $F\colon X \to Y$ is a function between 
normed spaces $X$ and $Y$, then
$F^{-1}(W)$ denotes the preimage of a set $W \subset Y$,
$F|_V$ the restriction of $F$ to a set $V \subset X$,
and $F'(x;h) \in Y$ the directional derivative 
of $F$ at $x \in X$ in direction $h \in X$ (if it exists). 

For the real Lebesgue spaces on an arbitrary nonempty
measurable set $\Omega \subset \R^d$, $d \in \N$,
we use the standard symbols 
$L^p(\Omega)$, $1 \leq p \leq \infty$, in this paper.
If 
$\Omega$ is equal to the intersection of 
a closed and an open subset of $\R^d$ and 
equipped with the Euclidean metric,
then we further write 
$C(\Omega)$ for the space of 
continuous functions $v\colon \Omega \to \R$, 
$C_c(\Omega)$ for the 
space of all $v \in C(\Omega)$ 
with compact support $\supp(v)$ 
in $\Omega$, 
$C_0(\Omega)$ for the closure of $C_c(\Omega)$
w.r.t.\ the supremum norm $\|\cdot\|_\infty$,
and $C_c^\infty(\Omega)$ for the set 
$\{v \in C_c(\Omega) \mid v = \hat v|_\Omega \text{ for some }
\hat v \in C^\infty(\R^d)\}$, where 
$C^\infty(\R^d)$ denotes the space of smooth 
real-valued functions on $\R^d$. 
Recall that, for sets $\Omega$ of the latter type, 
$(C_0(\Omega), \|\cdot\|_\infty)$ is a Banach space, 
the Riesz--Alexandroff representation theorem
\cite[Theorem 2.4.6]{Attouch2006} implies that 
$C_0(\Omega)^*$ is isometrically isomorphic 
to the space $\MM(\Omega)$ 
of finite signed Radon measures
with 
$\|\mu\|_{C_0(\Omega)^*} = \|\mu\|_{\MM(\Omega)} = |\mu|(\Omega)$
for all $\mu \in \MM(\Omega)$,
and we have  $L^1(\Omega) \hookrightarrow \MM(\Omega)$
 with $\|v\|_{L^1(\Omega)} = \|v\|_{\MM(\Omega)}$ for all 
$v \in L^1(\Omega)$. Here, the 
arrow $\hookrightarrow$ indicates a continuous embedding
and  $|\mu|$ 
denotes the total variation measure of $\mu \in \MM(\Omega)$. 
If $v \in C(\Omega)$ is given, then we
also write 
$\{v = 0\}$ for the level set
$v^{-1}(\{0\})$. 
Other preimages are denoted analogously, 
e.g., $\{v \neq 0\} := v^{-1}(\R \setminus \{0\})$. 
If $v \in L^p(\Omega)$ for some $1 \leq p \leq \infty$,
then we use the same shorthand notation for preimages 
with the convention 
that the resulting sets are understood as being 
defined up to sets of Lebesgue measure zero. 

Suppose now that $\emptyset \neq \Omega \subset \R^d$, $d \in \N$, 
is open and bounded and that \mbox{$T>0$}. 
In what follows, 
we use the standard symbols 
$C^k(\Omega)$, $C^{k,\alpha}(\Omega)$,
$W^{k,p}(\Omega)$, $H^k(\Omega)$, $H_0^k(\Omega)$,
$L^p(0,T;W^{k,q}(\Omega))$,
$W^{k,p}(0,T;L^q(\Omega))$, $C([0,T];L^p(\Omega))$, etc.\ with 
$k \in \N$, $\alpha \in (0,1]$, and $p,q \in [1,\infty]$
for the spaces of classically differentiable functions,
the Hölder, Sobolev, and Sobolev--Bochner spaces, 
and the space of continuous functions
on $[0,T]$ with values in $L^p(\Omega)$,
respectively. For the sake of brevity, 
we drop the reference to the interval $(0,T)$ and 
the set $\Omega$
in the notation for the spaces involving time 
if it is clear, i.e., we write 
$L^2(L^2)$ instead of 
$L^2(0,T;L^2(\Omega))$, $C(L^2)$ instead of 
$C([0,T];L^2(\Omega))$, etc. 
For the dual of $H_0^1(\Omega)$ with pivot space $L^2(\Omega)$,
we write $H^{-1}(\Omega)$ as usual. 
Recall that the space 
$L^p(L^p)$ can be identified (in the sense of an isometric isomorphism)
with the space $L^p(  (0,T) \times \Omega )$ 
for all 
$p \in [1,\infty)$. 
For time derivatives, we use the symbol 
$\partial_t$ (understood distributionally or, respectively, 
in the sense of the Sobolev--Bochner spaces),
and for gradients and the Laplacian 
(w.r.t.\ the spatial variable in the Sobolev--Bochner setting),
we use the standard notation $\nabla$ and $\Delta$. 
In the one-dimensional case, (weak) derivatives are also 
denoted by $v'$ instead of $\nabla v$. 
Evaluations of elements of the spaces 
$W^{1,p}(L^q)$ at times $t \in [0,T]$ are always 
understood in the sense of traces. The same is true 
for evaluations of elements of $W^{k,p}(\Omega)$
on $\partial \Omega$ if $\Omega$ is a
Lipschitz domain. With $\sgn\colon \R \to \{-1,0,1\}$,
we denote the usual signum function. Note that, 
when writing $f(v)$ for some $f \colon \R \to \R$
and an element of a Lebesgue/Bochner/Sobolev space $v$,
this always means that $f$ acts by superposition. 

\section{An abstract directional differentiability result}
\label{sec:3}

In this section, 
we derive the 
abstract directional differentiability result for 
parameterized non-expansive fixed-point equations
that allows us to prove the 
directional differentiability of the 
solution map $S_\psi^\phi\colon u \mapsto y$ 
of the bilateral parabolic obstacle problem 
\eqref{eq:bilateral_obst_intro}. Henceforth, we focus on 
equations of the following type:
\begin{equation*}
    \tag{F}
    \label{eq:F}
    \qquad \lambda = F(\lambda,u).
\end{equation*}
Our standing assumptions on the quantities in \eqref{eq:F} are as follows:
\begin{assumption}[standing assumptions for \cref{sec:3}]~
\label{ass:standing:abstract}~
\begin{enumerate}[label=\roman*)]
\item $(\Lambda, \|\cdot\|_\Lambda)$ is the 
topological dual space of a real separable 
normed space;
\item $(Y, \|\cdot\|_Y)$ and 
$(U, \|\cdot\|_U)$ are real normed spaces;
\item $\bar u \in U$ and $r>0$ are given;
\item\label{ass:standing:abstract:iv}
$F\colon \Lambda \times B_r^U(\bar u) \to \Lambda$ 
is a function such that the following 
conditions hold:
\begin{enumerate}[label=\alph*)]
\item there 
exists a map
$M\colon B_r^U(\bar u) \to \Lambda$ such that 
\begin{equation}
\label{eq.M_FP}
    M(u) = F(M(u), u) \qquad \forall u \in B_r^U(\bar u)
\end{equation}
and such that there exists a constant $C>0$ with 
\begin{equation}
\label{eq:LipM}
\| M(u) - M(\bar u)\|_\Lambda 
\leq C \|u - \bar u\|_U\qquad \forall u \in B_r^U(\bar u);
\end{equation}
\item there exists 
a function
$G\colon \Lambda \times B_r^U(\bar u) \to Y$ 
such that,
for all $\lambda_1, \lambda_2 \in \Lambda$ 
and $u \in B_r^U(\bar u)$, we have
\begin{equation}
\label{eq:Lipschitz_estimate_abstract}
 \|
F(\lambda_1,u) - F(\lambda_2,u)
 \|_\Lambda
+
 \|
G(\lambda_1,u) - G(\lambda_2,u)
\|_Y
\leq
 \|
\lambda_1 - \lambda_2
 \|_\Lambda;
\end{equation}
\item $F$ and $G$ are Hadamard directionally differentiable 
at $(M(\bar u), \bar u)$
in the sense that, for all 
$g \in \Lambda$ and $h \in U$, 
there exist $F'[(M(\bar u),\bar u);(g,h)] \in  \Lambda$
and $G'[(M(\bar u),\bar u);(g,h)] \in  Y$
with
\begin{equation*}
\begin{aligned}
    &\{g_n\} \subset \Lambda, 
    \{\tau_n\} \subset \left (0, \frac{r}{1+\|h\|_U} \right ),
    g_n \weaklystar g \text{ in } \Lambda, 
    \tau_n \to 0
    \\
    &
    \Rightarrow
    ~
    \left \{
    \begin{aligned}
    \frac{F(M(\bar u) + \tau_n g_n, \bar u + \tau_n h) - F(M(\bar u),\bar u)}{\tau_n}
    \weaklystar F'[(M(\bar u),\bar u);(g,h)],
    \\
    \frac{G(M(\bar u) + \tau_n g_n, \bar u + \tau_n h) - G(M(\bar u),\bar u)}{\tau_n}
    \to G'[(M(\bar u),\bar u);(g,h)].
    \end{aligned}
    \right.
\end{aligned}
\end{equation*}
\end{enumerate}
\end{enumerate}
\end{assumption}

We begin our analysis by proving that 
the Lipschitz estimate \eqref{eq:Lipschitz_estimate_abstract} 
carries over to the directional derivatives of $F$ and $G$.

\begin{lemma}%
\label{lem:pseudo_contractive}%
For all $h \in U$ and $g_1, g_2 \in \Lambda$, 
it holds 
\begin{equation}
\label{eq:randomeq273g38}
\begin{aligned}
& \|
F'[(M(\bar u),\bar u);(g_1,h)] 
- F'[(M(\bar u), \bar u);(g_2,h)]
 \|_\Lambda
\\
&\quad +
 \|
G'[(M(\bar u),\bar u);(g_1,h)] 
- G'[(M(\bar u),\bar u);(g_2,h)]
  \|_Y
\leq
 \|
g_1 - g_2
 \|_\Lambda.
\end{aligned}
\end{equation}
\end{lemma}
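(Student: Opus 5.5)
The plan is to pass to the limit in the Lipschitz estimate \eqref{eq:Lipschitz_estimate_abstract}, using constant sequences $g_n := g_1$ and $g_n := g_2$ in the definition of Hadamard directional differentiability. Fix $h \in U$, $g_1, g_2 \in \Lambda$, and a null sequence $\{\tau_n\} \subset (0, r/(1+\|h\|_U))$. Then $\bar u + \tau_n h \in B_r^U(\bar u)$. We apply \eqref{eq:Lipschitz_estimate_abstract} with $u := \bar u + \tau_n h$, $\lambda_1 := M(\bar u) + \tau_n g_1$ and $\lambda_2 := M(\bar u) + \tau_n g_2$. Dividing by $\tau_n > 0$ and subtracting the common term $F(M(\bar u),\bar u)$ (resp.\ $G(M(\bar u),\bar u)$) from both difference quotients gives
\begin{equation*}
\left\| \delta^F_{n,1} - \delta^F_{n,2} \right\|_\Lambda + \left\| \delta^G_{n,1} - \delta^G_{n,2} \right\|_Y \leq \|g_1 - g_2\|_\Lambda,
\end{equation*}
where $\delta^F_{n,i}$ and $\delta^G_{n,i}$ denote the difference quotients of $F$ and $G$ from Assumption \ref{ass:standing:abstract}\ref{ass:standing:abstract:iv}c) with $g_n \equiv g_i$.

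Constant sequences trivially converge weakly-star, so Assumption \ref{ass:standing:abstract}\ref{ass:standing:abstract:iv}c) yields two convergences. First, $\delta^F_{n,1} - \delta^F_{n,2} \weaklystar F'[(M(\bar u),\bar u);(g_1,h)] - F'[(M(\bar u),\bar u);(g_2,h)]$ in $\Lambda$. Second, $\delta^G_{n,1} - \delta^G_{n,2} \to G'[(M(\bar u),\bar u);(g_1,h)] - G'[(M(\bar u),\bar u);(g_2,h)]$ strongly in $Y$.

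The only delicate point is passing to the limit in the sum of the two norms, because the $F$-part converges only weakly-star. For the $G$-part, continuity of the norm gives convergence of $\|\delta^G_{n,1} - \delta^G_{n,2}\|_Y$ to the limit norm. For the $F$-part, we use weak-star lower semicontinuity of the dual norm on $\Lambda = X^*$. This follows from $\|\lambda\|_\Lambda = \sup_{\|x\|_X \le 1} \langle \lambda, x\rangle_X$, which is a supremum of weak-star continuous functionals. Hence the $\liminf$ of the $F$-norms dominates the norm of the limit. Taking $\liminf_{n\to\infty}$ of the left-hand side, splitting it into the $\liminf$ of the $F$-part plus the limit of the $G$-part, and bounding it by $\|g_1-g_2\|_\Lambda$ then yields \eqref{eq:randomeq273g38}.
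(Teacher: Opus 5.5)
Your proposal is correct and follows the paper's proof: you apply \eqref{eq:Lipschitz_estimate_abstract} at $(M(\bar u)+\tau g_i,\bar u+\tau h)$, divide by $\tau$, and pass to the limit. As in the paper, you use strong convergence for the $G$-part and weak-star lower semicontinuity of $\|\cdot\|_\Lambda$ for the $F$-part.
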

\begin{proof}
If $h \in U$ and $g_1, g_2 \in \Lambda$ are given, 
then  
Assumption
\ref{ass:standing:abstract}\ref{ass:standing:abstract:iv}b)
implies that, for all sufficiently small $\tau>0$, we have
\begin{align*}
&\left \|
\frac{
F(M(\bar u) +  \tau g_1, \bar u + \tau h)  - F(M(\bar u),\bar u) + F(M(\bar u),\bar u) - F(M(\bar u) + \tau g_2,\bar u + \tau h)}
{\tau}
\right \|_\Lambda
\\
&
+ \hspace{-0.01cm}
\left \|
\frac{
G(M(\bar u) +  \tau g_1, \bar u + \tau h) 
- G(M(\bar u), \bar u) 
+ G(M(\bar u), \bar u)
- G(M(\bar u) +  \tau g_2, \bar u + \tau h)}
{\tau}
\right \|_Y
\\
&
\leq
\|
g_1 - g_2
 \|_\Lambda.
\end{align*}
Letting $\tau$ go to zero in the above---using the 
directional differentiability properties 
of $G$ and $F$ 
and the weak-star lower semicontinuity of 
$\|\cdot\|_\Lambda$---yields \eqref{eq:randomeq273g38}. 
\end{proof}

To study the directional
differentiability properties of 
$M$ at $\bar u$, we next introduce:

\begin{definition}[difference quotients of $M$]%
\label{def:diff_quots}%
If $h \in U$
is given, then 
we define $\hat \tau_h := r/(1+\|h\|_U) \in (0, \infty)$ and 
denote by $\{\delta_\tau^h\} \subset \Lambda$
the family of difference quotients
 \[
 \delta_\tau^h := \frac{M(\bar u + \tau h) - M(\bar u)}{\tau},
 \qquad 0 < \tau < \hat \tau_h.
 \]
\end{definition}

Note that
$\{\delta_\tau^h\}$ is bounded in $\Lambda$
for all $h \in U$ by \eqref{eq:LipM}. 
This allows us to prove:

\begin{lemma}
\label{lem:DD_FP}
Let $h \in U$ be given.
The set 
\begin{equation}
\label{eq:DD_def}
\DD_h := 
\left \{ \delta \in \Lambda ~\left |~ \exists \{\tau_n\} \subset (0, \hat\tau_h) \colon \tau_n \to 0, 
\delta_{\tau_n}^h \weaklystar \delta \text{ in } \Lambda \right. \right\} 
\end{equation}
is nonempty and bounded 
and every $\delta \in \DD_h$ satisfies
\begin{equation}
\label{eq:FP}
\delta = F'[(M(\bar u), \bar u);(\delta, h)].
\end{equation}
\end{lemma}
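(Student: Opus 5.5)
Nonemptiness and boundedness of $\DD_h$ come from a weak-star compactness argument combined with the Lipschitz estimate \eqref{eq:LipM}. For every $\tau \in (0,\hat\tau_h)$ we have $\|\tau h\|_U = \tau\|h\|_U < r$, so $\bar u + \tau h \in B_r^U(\bar u)$. Hence \eqref{eq:LipM} yields $\|\delta_\tau^h\|_\Lambda \leq C\|h\|_U$. Since $\Lambda$ is the dual of a separable normed space, its closed balls are weak-star sequentially compact (Banach--Alaoglu together with metrizability of the weak-star topology on bounded sets). Taking any sequence $\tau_n \to 0$ in $(0,\hat\tau_h)$, we can therefore extract a subsequence along which $\delta_{\tau_n}^h \weaklystar \delta$ for some $\delta \in \Lambda$, so $\DD_h \neq \emptyset$. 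Boundedness follows from the weak-star lower semicontinuity of $\|\cdot\|_\Lambda$: every $\delta \in \DD_h$ satisfies $\|\delta\|_\Lambda \leq C\|h\|_U$.

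For the fixed-point identity \eqref{eq:FP}, fix $\delta \in \DD_h$ and a sequence $\tau_n \to 0$ with $\delta_{\tau_n}^h \weaklystar \delta$, and set $g_n := \delta_{\tau_n}^h$. By construction $M(\bar u + \tau_n h) = M(\bar u) + \tau_n g_n$. The fixed-point relation \eqref{eq.M_FP}, applied at $\bar u + \tau_n h$ and at $\bar u$, then gives
\[
g_n = \frac{M(\bar u + \tau_n h) - M(\bar u)}{\tau_n} = \frac{F(M(\bar u) + \tau_n g_n, \bar u + \tau_n h) - F(M(\bar u), \bar u)}{\tau_n}.
\]
The hypotheses of Assumption \ref{ass:standing:abstract}\ref{ass:standing:abstract:iv}c) hold: $\tau_n \in (0, r/(1+\|h\|_U))$, $\tau_n \to 0$, and $g_n \weaklystar \delta$. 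So the right-hand side converges weak-star to $F'[(M(\bar u),\bar u);(\delta,h)]$, while the left-hand side converges weak-star to $\delta$. Uniqueness of weak-star limits, which holds because the weak-star topology is Hausdorff, yields \eqref{eq:FP}.

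There is no serious obstacle. The only point needing care is the sequential form of weak-star compactness, which is exactly where the separability in Assumption \ref{ass:standing:abstract} i) enters. It is also worth noting that the Hadamard-type condition in c) is indispensable here, because the directions $g_n$ vary with $n$; mere Gâteaux differentiability along a fixed direction would not be enough.
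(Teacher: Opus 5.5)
Your proposal is correct and follows the paper's proof essentially step for step. You use the bound on $\{\delta_\tau^h\}$ from \eqref{eq:LipM} together with sequential Banach--Alaoglu (which relies on separability) for nonemptiness and boundedness. You then rewrite $\delta_{\tau_n}^h$ via \eqref{eq.M_FP} and pass to the limit with Assumption \ref{ass:standing:abstract}\ref{ass:standing:abstract:iv}c) to get \eqref{eq:FP}. Your extra details are valid: the check that $\bar u + \tau h \in B_r^U(\bar u)$ and the explicit bound $\|\delta\|_\Lambda \leq C\|h\|_U$ via weak-star lower semicontinuity.
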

\begin{proof}
    The nonemptiness and boundedness of $\DD_h$ follow
    from the theorem of Banach--Alaoglu
    and the boundedness of 
    the family $\{\delta_\tau^h\}$ in $\Lambda$ obtained from \eqref{eq:LipM}.
    Consider now some arbitrary but fixed
    $\delta \in \DD_h$ with associated sequence 
    $\{\tau_n\} \subset (0, \hat \tau_h)$.
    Then it follows from
    \cref{def:diff_quots} and \eqref{eq.M_FP} that
    \begin{equation*}
    \begin{aligned}
    \delta_{\tau_n}^h = 
    \frac{M(\bar u+ \tau_n h)-M(\bar u)}{\tau_n}
    &=\frac{F(M(\bar u+\tau_nh), \bar u+\tau_nh)- F(M(\bar u), \bar u)}{\tau_n}\\
    &= \frac{F(M(\bar u)+\tau_n \delta_{\tau_n}^h, \bar u+\tau_nh)-F(M(\bar u), \bar u)}{\tau_n}.
    \end{aligned}
    \end{equation*}
    Due to Assumption 
\ref{ass:standing:abstract}\ref{ass:standing:abstract:iv}c),
    we can pass to the limit $n \to \infty$ in the above to obtain
    $ \delta = F'[(M(\bar u), \bar u);(\delta, h)] $.
    As $\delta \in \DD_h$ was arbitrary, this proves the assertion.
\end{proof}

We remark that 
one cannot expect that the fixed-point equation 
\eqref{eq:FP} is
uniquely solvable in the situation of 
Assumption \ref{ass:standing:abstract}. 
(Consider, e.g., the case 
$Y = U = \Lambda = \R$, 
$G\equiv 0$,
and $F(\lambda, u) = \lambda$.) However, 
due to \cref{lem:pseudo_contractive}, we still get:

\begin{lemma}
\label{lem:same_for_accpts}
Let $h \in U$ be given and let $\DD_h$ be defined as in \eqref{eq:DD_def}. 
Then, 
for all $\delta_1, \delta_2 \in \DD_h$, it 
holds 
$
G'[(M(\bar u), \bar u);(\delta_1, h)] 
= 
G'[(M(\bar u), \bar u);(\delta_2, h)]
$.
\end{lemma}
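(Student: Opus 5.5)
The argument will combine the fixed-point characterization of the accumulation points from \cref{lem:DD_FP} with the Lipschitz estimate for the directional derivatives in \cref{lem:pseudo_contractive}. The point is that the left-hand side of \eqref{eq:randomeq273g38} contains the $F'$-difference, and for elements of $\DD_h$ this difference coincides with $\delta_1 - \delta_2$ itself. That leaves no room for any nonzero $G'$-difference.

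Concretely, I fix $h \in U$ and $\delta_1, \delta_2 \in \DD_h$. By \cref{lem:DD_FP}, both satisfy the fixed-point equation \eqref{eq:FP}, i.e.,
\[
\delta_i = F'[(M(\bar u), \bar u);(\delta_i, h)], \qquad i = 1,2.
\]
Next I apply \cref{lem:pseudo_contractive} with $g_1 := \delta_1$ and $g_2 := \delta_2$. Substituting the above identities into the first term of \eqref{eq:randomeq273g38} yields
\[
\|\delta_1 - \delta_2\|_\Lambda
+ \bigl\| G'[(M(\bar u),\bar u);(\delta_1,h)] - G'[(M(\bar u),\bar u);(\delta_2,h)] \bigr\|_Y
\leq \|\delta_1 - \delta_2\|_\Lambda .
\]
Since $\|\delta_1 - \delta_2\|_\Lambda$ is a finite real number, I can subtract it from both sides. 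This shows that the $Y$-norm of the $G'$-difference is nonpositive, hence zero, which is the claim.

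I do not expect a genuine obstacle here; all the work was done in \cref{lem:pseudo_contractive} and \cref{lem:DD_FP}. The only point to keep in mind is that \eqref{eq:randomeq273g38} must hold for arbitrary $g_1, g_2 \in \Lambda$, not only along weak-star convergent sequences. This is exactly what \cref{lem:pseudo_contractive} provides, so it may be applied to elements of $\DD_h$ directly.
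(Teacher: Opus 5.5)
Your proposal is correct and is essentially the paper's own proof. You insert the fixed-point identity \eqref{eq:FP} from \cref{lem:DD_FP} into the estimate of \cref{lem:pseudo_contractive} with $g_i = \delta_i$, then cancel the finite term $\|\delta_1 - \delta_2\|_\Lambda$ to conclude that the $G'$-difference vanishes.
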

\pagebreak
\begin{proof}
Let $\delta_1, \delta_2 \in \DD_h$ be fixed. 
Then \eqref{eq:FP} and \cref{lem:pseudo_contractive} imply
\begin{align*}
& \|
\delta_1 - \delta_2
 \|_\Lambda
 +
 \|
G'[(M(\bar u),\bar u);(\delta_1,h)] 
- 
G'[(M(\bar u),\bar u);(\delta_2,h)]
\|_Y
\\
&=
 \|
F'[(M(\bar u),\bar u);(\delta_1,h)] 
- 
F'[(M(\bar u),\bar u);(\delta_2,h)]
\|_\Lambda
\\
&\qquad +
\|
G'[(M(\bar u),\bar u);(\delta_1,h)] 
- 
G'[(M(\bar u),\bar u);(\delta_2,h)]
\|_Y
\\
&\leq
\|
\delta_1 - \delta_2
 \|_\Lambda.
\end{align*}
Rearranging the above yields
$
G'[(M(\bar u), \bar u);(\delta_1, h)] 
= 
G'[(M(\bar u), \bar u);(\delta_2, h)]
$. 
\end{proof}

With \cref{lem:same_for_accpts} at hand, 
we can now prove the main result of this section.

\begin{theorem}[directional differentiability for the fixed-point equation]\label{th:abstract_dir_diff}%
Define $S\colon B_r^U(\bar u) \to Y$, 
$S(u):=G(M(u), u)$. Then $S$ 
is directionally differentiable 
at $\bar u$
in all directions $h \in U$
in the sense that, for all $h \in U$, 
there exists $S'(\bar u;h) \in Y$
such that 
\begin{equation}
\label{eq:diff_claim}
\frac{S(\bar u + \tau h) - S(\bar u)}{\tau}
\to S'(\bar u;h)
\text{ in } Y
\text{ for } (0, \hat \tau_h) \ni \tau \to 0.
\end{equation}
Further, 
for every $h \in U$, the directional derivative
$S'(\bar u;h)$ is given by 
\[
   S'(\bar u;h) = 
   G'[(M(\bar u), \bar u);(\delta, h)]\qquad \forall \delta \in \DD_h, 
\]
where $\DD_h$ denotes the set of weak-star accumulation points in 
\eqref{eq:DD_def}. 
\end{theorem}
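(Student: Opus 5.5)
By \cref{lem:DD_FP} the set $\DD_h$ is nonempty, and by \cref{lem:same_for_accpts} the value $G'[(M(\bar u),\bar u);(\delta,h)]$ is the same for every $\delta\in\DD_h$. We may therefore define $S'(\bar u;h)$ as this common value. It then remains to show the convergence \eqref{eq:diff_claim}, and we do this with a subsequence–subsequence argument.

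First we rewrite the difference quotient of $S$. Since $M(\bar u+\tau h)=M(\bar u)+\tau\delta_\tau^h$ by \cref{def:diff_quots}, we have
\[
\frac{S(\bar u+\tau h)-S(\bar u)}{\tau}
=\frac{G(M(\bar u)+\tau\delta_\tau^h,\bar u+\tau h)-G(M(\bar u),\bar u)}{\tau}.
\]
Now argue by contradiction. Suppose \eqref{eq:diff_claim} fails. Then there exist $\varepsilon>0$ and a sequence $\{\tau_n\}\subset(0,\hat\tau_h)$ with $\tau_n\to0$ such that the difference quotients along $\tau_n$ stay at distance at least $\varepsilon$ from $S'(\bar u;h)$ in $Y$.

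Next we extract a weak-star convergent subsequence. By \eqref{eq:LipM}, the sequence $\{\delta_{\tau_n}^h\}$ is bounded in $\Lambda$. Since $\Lambda$ is the dual of a separable normed space, bounded sets in $\Lambda$ are weak-star sequentially compact; this is the sequential form of Banach--Alaoglu, which holds because the weak-star topology on bounded sets is metrizable. Hence there is a subsequence, not relabeled, with $\delta_{\tau_n}^h\weaklystar\delta$ for some $\delta\in\Lambda$. By definition \eqref{eq:DD_def}, this limit satisfies $\delta\in\DD_h$.

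Now apply the Hadamard directional differentiability of $G$ in Assumption \ref{ass:standing:abstract}\ref{ass:standing:abstract:iv}c), taking $g_n:=\delta_{\tau_n}^h$ and $g:=\delta$. It shows that the difference quotients of $S$ along this subsequence converge strongly in $Y$ to $G'[(M(\bar u),\bar u);(\delta,h)]$. By \cref{lem:same_for_accpts}, this limit equals $S'(\bar u;h)$, which contradicts the choice of $\{\tau_n\}$. This proves \eqref{eq:diff_claim} together with the stated formula for $S'(\bar u;h)$.

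The only point that needs care is the extraction step. We need weak-star \emph{sequential} compactness, which uses the separability of the predual in Assumption \ref{ass:standing:abstract}i), and we need the strong (not merely weak) convergence in the assumption on $G$, so that the contradiction holds in the norm of $Y$. The rest follows directly from the previous lemmas.
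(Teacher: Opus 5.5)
Your proof is correct and follows the paper's argument. You extract a weak-star convergent subsequence of the quotients $\delta_{\tau_n}^h$, use the Hadamard differentiability of $G$ to identify the limit as $G'[(M(\bar u),\bar u);(\delta,h)]$, and invoke \cref{lem:same_for_accpts} for independence of $\delta\in\DD_h$; your contradiction argument is simply the unpacked form of the subsequence--subsequence principle the paper cites.
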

\begin{proof}
Let $h \in U$ be fixed and let 
$\{\tau_n\} \subset (0, \hat\tau_h)$
be a sequence satisfying 
$\tau_n \to 0$ for $n \to \infty$.
Let $\delta_n^h := \delta_{\tau_n}^h$ be defined 
as in \cref{def:diff_quots}.
Then 
 $\{\delta_{n}^h\}$ is 
bounded in $\Lambda$ 
by \eqref{eq:LipM} and 
we can find a
subsequence $\{\delta_{n_k}^h\}$
such that 
\smash{$\delta_{n_k}^h \weaklystar \delta$} holds  for $k \to \infty$ with 
some $\delta \in \DD_h$. 
From the 
definition of $\delta_{n_k}^h$
and Assumption
\ref{ass:standing:abstract}\ref{ass:standing:abstract:iv}c), we get that 
   \begin{equation}
   \label{eq:randomeq2783g38}
    \begin{aligned}
    &\frac{G(M(\bar u + \tau_{n_k} h), \bar u+\tau_{n_k} h)- G(M(\bar u), \bar u)}{\tau_{n_k}}
    \\
    &\quad = \frac{G(M(\bar u)+\tau_{n_k} \delta_{n_k}^h, \bar u + \tau_{n_k} h)-G(M(\bar u), \bar u)}{\tau_{n_k}}
    \to G'[(M(\bar u),\bar u);(\delta,h)]
    \text{ in } Y.
    \end{aligned}
    \end{equation}
Due to \cref{lem:same_for_accpts}, 
we know that the right-hand side of \eqref{eq:randomeq2783g38}
is independent of $\delta \in \DD_h$, i.e., 
it holds $\eta = G'[(M(\bar u),\bar u);(\delta,h)]$
for all $\delta \in \DD_h$ with some 
fixed $\eta \in Y$.
We may thus conclude that
there exists 
$\eta \in Y$ such that, for all 
$\{\tau_n\} \subset (0, \hat \tau_h)$
with $\tau_n \to 0$, there is a 
subsequence $\{\tau_{n_k}\}$ satisfying 
   \begin{equation*}
    \begin{aligned}
    \frac{S(\bar u + \tau_{n_k} h) - S(\bar u)}{\tau_{n_k}}
    =
    \frac{G(M(\bar u + \tau_{n_k} h), \bar u+\tau_{n_k} h)- G(M(\bar u), \bar u)}{\tau_{n_k}}
    \to \eta
    \text{ in } Y.
    \end{aligned}
    \end{equation*}
Using a standard subsequence-subsequence argument, 
it now follows that \eqref{eq:diff_claim} 
holds with the derivative $S'(\bar u;h) = \eta$. 
This proves the assertion. 
\end{proof}

Note that---although the whole
idea of the proof of 
\cref{th:abstract_dir_diff} is to study
the set of weak-star accumulation points 
$\DD_h$ in \eqref{eq:DD_def}---we 
do not get in the above 
that the difference quotients 
$\{\delta_\tau^h\}$ converge weakly-star in $\Lambda$ for $\tau \to 0$. 
The directional differentiability 
is only obtained for the composition 
$S(u) := G(M(u), u)$.

\section{Preliminaries on parabolic obstacle problems}
\label{sec:4}
In the remainder of this paper, 
the main goal is to show that  
\cref{th:abstract_dir_diff}
can be applied to 
the EVI \eqref{eq:bilateral_obst_intro}. 
We begin by collecting several preliminary
results on unilateral and bilateral
parabolic obstacle problems that are essential for seeing that \eqref{eq:bilateral_obst_intro}
is covered by the abstract setting 
of \cref{sec:3}. 
Throughout this section, 
we always assume that the following conditions
are satisfied.

\begin{assumption}[standing assumptions and notation for \cref{sec:4}]~
\label{ass:standing:obstacle:prelims}~
\begin{enumerate}[label=\roman*)]
\item $T>0$ is a given terminal time;
\item $\Omega \subset \R^d$, $d \in \{1,2,3\}$, 
is a bounded domain that is 
convex or of class $C^{1,1}$;
\item $Q := (0,T) \times \Omega$ denotes the space-time cylinder;
\item $y_0 \in H_0^1(\Omega)$ is a given and fixed initial datum;
\item $\psi, \phi \in L^2(H^2) \cap H^1(L^2)$ are functions such that
$\psi(0) \leq y_0 \leq \phi(0)$ holds 
a.e.\ in $\Omega$ and such that there 
exists 
$v \in L^2(H_0^1)$ satisfying $ \psi \leq v \leq \phi \text{ a.e.\ in } Q$.
\end{enumerate}
\end{assumption}
For convenience, we also introduce the abbreviations
\begin{gather*}
\KK_\psi 
:= \{v \in L^2(H_0^1) \mid \psi \leq v  \text{ a.e.\ in } Q\},
\qquad
\KK^\phi  := - \KK_{-\phi},
\qquad 
\KK_\psi^\phi 
:= \KK_\psi \cap \KK^\phi.
\end{gather*}

Note that 
our assumptions 
on $\psi$ and $\phi$
ensure that 
the above three sets are nonempty.
We start with a solvability result for the parabolic obstacle problem.

\begin{proposition}[unique solvability for parabolic obstacle problems]
\label{prop:ex_uniqu}%
Suppose that
$u \in L^2(L^2)$ is given
and that 
$\KK \in \{\KK_\psi, \KK^\phi, \KK_\psi^\phi\}$. Then the problem%
\begin{equation}
\label{eq:bilateral_obst_weak}
\left \{
~
\begin{aligned}
&\text{Find } y \in \KK \cap H^1(H^{-1})
\text{ such that } y(0) = y_0 \text{ and }
\\
&\int_0^T \left \langle \partial_t y  - \Delta y - u, v - y\right  \rangle_{H_0^1(\Omega)}\mathrm{d}t \geq 0 
~\quad\forall v \in \KK
\end{aligned}
\right.
\end{equation}
has a unique solution $y$. 
Further, the following is true: 
\begin{enumerate}[label=\roman*)]
\item 
A function $y$ solves \eqref{eq:bilateral_obst_weak} 
for $\KK = \KK_\psi$
if and only if 
there exists
a multiplier $\lambda_\psi$
such that the following complementarity system holds:
\begin{equation}
\label{eq:unilateral_lower_obst_stat_sys}
\left \{
~
\begin{gathered}
y \in L^2(H_0^1\cap H^2) \cap H^1(L^2),
\quad 
\lambda_\psi \in L^2(L^2),
\quad y(0) = y_0, 
\\
\partial_t y  - \Delta y  = 
u - \lambda_\psi \quad \text{ a.e.\ in }Q,
\\
\lambda_\psi \leq 0,
\qquad  
y - \psi \geq 0,
\qquad
\lambda_\psi(y - \psi) = 0\quad \text{ a.e.\ in }Q.
\end{gathered}
\right.
\end{equation}
\item
A function $y$ solves \eqref{eq:bilateral_obst_weak} 
for
$\KK = \KK^\phi$
if and only if 
there exists a multiplier 
$\lambda^\phi$
such that the following complementarity system holds:
\begin{equation}
\label{eq:unlateral_upper_obst_stat_sys}
\left \{
~
\begin{gathered}
y \in L^2(H_0^1\cap H^2) \cap H^1(L^2),
\quad 
\lambda^\phi \in L^2(L^2),
\quad y(0) = y_0, 
\\
\partial_t y  - \Delta y  = 
u - \lambda^\phi\quad \text{ a.e.\ in }Q,
\\
\lambda^\phi \geq 0,
\qquad  
y - \phi \leq 0,
\qquad
\lambda^\phi(y - \phi) = 0\quad \text{ a.e.\ in }Q.
\end{gathered}
\right.
\end{equation}
\item
A function $y$ solves \eqref{eq:bilateral_obst_weak} 
for
$\KK = \KK_\psi^\phi$
if and only if 
there exist multipliers $\lambda_\psi$ and $\lambda^\phi$
such that the following complementarity system holds:
\begin{equation}
\label{eq:bilateral_obst_stat_sys}
\left \{
~
\begin{gathered}
y \in L^2(H_0^1\cap H^2) \cap H^1(L^2),
\quad 
\lambda_\psi, \lambda^\phi \in L^2(L^2),
\quad y(0) = y_0, 
\\
\partial_t y  - \Delta y  = 
u - \lambda_\psi - \lambda^\phi\quad \text{ a.e.\ in }Q,
\\
\lambda_\psi \leq 0,
\qquad  
y - \psi \geq 0,
\qquad
\lambda_\psi(y - \psi) = 0\quad \text{ a.e.\ in }Q,
\\
\lambda^\phi \geq 0,
\qquad  
y - \phi \leq 0,
\qquad
\lambda^\phi(y - \phi) = 0\quad \text{ a.e.\ in }Q.
\end{gathered}
\right.
\end{equation}
\end{enumerate}
\end{proposition}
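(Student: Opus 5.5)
The plan is to obtain all three claims from a penalization argument, treating the three admissible sets at once as far as possible. The statement is largely classical (Lions, Brezis, Barbu), so the work lies in checking that the assumptions on $\psi,\phi$ suffice for $H^1(L^2)\cap L^2(H^2)$ regularity and for $L^2(L^2)$ multipliers.

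\textbf{Uniqueness.} Suppose $y_1,y_2$ are two solutions of \eqref{eq:bilateral_obst_weak}. Test the inequality for $y_1$ with $v=y_2$ and the one for $y_2$ with $v=y_1$, and add. This gives $\int_0^T\langle \partial_t(y_1-y_2)-\Delta(y_1-y_2),y_1-y_2\rangle\,\dd t\le 0$. Since $y_1-y_2\in H^1(H^{-1})\cap L^2(H_0^1)$, the integration-by-parts formula applies, and with $y_1(0)=y_2(0)$ we get $\tfrac12\|(y_1-y_2)(T)\|_{L^2}^2+\|\nabla(y_1-y_2)\|_{L^2(L^2)}^2\le0$, hence $y_1=y_2$.

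\textbf{Existence and regularity via penalization.} For $\varepsilon>0$, let $y_\varepsilon\in L^2(H_0^1\cap H^2)\cap H^1(L^2)$ solve
\[
\partial_t y_\varepsilon-\Delta y_\varepsilon-\tfrac1\varepsilon(\psi-y_\varepsilon)^++\tfrac1\varepsilon(y_\varepsilon-\phi)^+=u,\qquad y_\varepsilon(0)=y_0,
\]
dropping the term of the absent obstacle in the unilateral cases. This is a semilinear parabolic equation with a monotone Lipschitz nonlinearity, so it is uniquely solvable with maximal regularity, using $y_0\in H_0^1$ and the $C^{1,1}$ or convex domain for $H^2$-regularity. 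The key uniform estimate is $\|\varepsilon^{-1}(\psi-y_\varepsilon)^+\|_{L^2(L^2)}+\|\varepsilon^{-1}(y_\varepsilon-\phi)^+\|_{L^2(L^2)}\le C$.

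To prove it in the bilateral case, test the equation with $-(\psi-y_\varepsilon)^+$ and rewrite $\partial_t y_\varepsilon-\Delta y_\varepsilon=\partial_t(y_\varepsilon-\psi)-\Delta(y_\varepsilon-\psi)+\partial_t\psi-\Delta\psi$. The term $(\partial_t\psi-\Delta\psi)$ lies in $L^2(L^2)$ by assumption. The parabolic part yields the nonnegative quantity $\tfrac12\|(\psi-y_\varepsilon)^+(T)\|^2+\|\nabla(\psi-y_\varepsilon)^+\|^2$; no boundary term arises because $y_\varepsilon=0\ge\psi$ on $\partial\Omega$ is needed, and this should follow from the existence of $v\in L^2(H_0^1)$ with $\psi\le v\le\phi$. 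The initial term vanishes since $\psi(0)\le y_0$. The cross term $(y_\varepsilon-\phi)^+(\psi-y_\varepsilon)^+$ is zero wherever $\psi\le\phi$, which holds by assumption. Young's inequality then bounds the lower penalty term, and the upper one is handled symmetrically.

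With the penalty bounds in hand, testing the equation with $\partial_t y_\varepsilon$ and with $-\Delta y_\varepsilon$ bounds $y_\varepsilon$ uniformly in $L^2(H^2)\cap H^1(L^2)$. We then extract $y_\varepsilon\rightharpoonup y$ and $\varepsilon^{-1}(\psi-y_\varepsilon)^+\rightharpoonup-\lambda_\psi$, $\varepsilon^{-1}(y_\varepsilon-\phi)^+\rightharpoonup\lambda^\phi$ weakly in $L^2(L^2)$. By Aubin--Lions, $y_\varepsilon\to y$ strongly in $L^2(L^2)$. From this we read off the sign conditions on the multipliers and the feasibility $\psi\le y\le\phi$, since $\|(\psi-y_\varepsilon)^+\|_{L^2(L^2)}\le C\varepsilon$. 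Complementarity comes from passing to the limit in $\int\varepsilon^{-1}(\psi-y_\varepsilon)^+(y_\varepsilon-\psi)\le0$ (weak times strong convergence), which combined with the signs forces $\lambda_\psi(y-\psi)=0$; the upper multiplier is treated the same way. This establishes the systems \eqref{eq:unilateral_lower_obst_stat_sys}--\eqref{eq:bilateral_obst_stat_sys} for the limit $y$.

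\textbf{Equivalences.} If $(y,\lambda_\psi,\lambda^\phi)$ satisfies the complementarity system, then for $v\in\KK$ we have $\langle\partial_t y-\Delta y-u,v-y\rangle=-\lambda_\psi(v-\psi)-\lambda^\phi(v-\phi)\ge0$ pointwise, using complementarity to replace $y$ by the obstacles. So $y$ solves the VI. Conversely, the constructed $y$ solves the VI, and by uniqueness it is the solution, so every solution admits multipliers; this gives the "only if" direction.

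The main obstacle is the uniform $L^2$ bound on the penalty terms, specifically controlling the boundary and initial contributions and the interaction between the two penalty terms. I would handle it as sketched: compatibility $\psi(0)\le y_0\le\phi(0)$ takes care of the initial term, and the existence of a feasible $v$ is used to justify $\psi^+,\phi^-\in L^2(H_0^1)$ so that no boundary term appears.
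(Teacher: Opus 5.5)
Your proposal is correct and follows the paper's proof essentially step for step. The shared steps are: uniqueness by cross-testing; the same penalized semilinear problem; and the uniform $L^2(L^2)$ bound on the penalty terms, obtained by testing with the truncation $\min(0,y_\varepsilon-\psi)$, using $\psi(0)\le y_0$, the vanishing cross term from $\psi\le\phi$, and $\min(0,y_\varepsilon-\psi)\in L^2(H_0^1)$, which the paper also derives from Stampacchia's lemma and the feasibility assumption. Further shared steps are heat-equation regularity, weak limits with Aubin--Lions for feasibility and complementarity, and the converse direction via pointwise complementarity plus uniqueness. The only differences are cosmetic: the paper uses strong convergence in $L^2(H_0^1)$ rather than $L^2(L^2)$, and it invokes maximal regularity directly instead of testing with $\partial_t y_\varepsilon$ and $-\Delta y_\varepsilon$.
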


\begin{proof}
The proof 
is along standard lines; see, e.g., 
\cite[chapt.~2]{Hafemeyer2020}, \cite[sect.~2]{Peng2025}, 
and \cite[sect.~2]{Adams2002}. 
We include it for the convenience 
of the reader 
and since it does not seem to be present in 
the literature yet in a version that 
fits to the setting in 
Assumption \ref{ass:standing:obstacle:prelims} and 
\eqref{eq:bilateral_obst_weak}.
As the arguments are exactly the same 
for the three sets 
$ \KK_\psi$, $ \KK^\phi$,
and $\KK_\psi^\phi$, we focus on the case 
$\smash{\KK = \KK_\psi^\phi}$ in the following. 

That \eqref{eq:bilateral_obst_weak} 
with $\smash{\KK = \KK_\psi^\phi}$ can have at most one solution follows
from a standard argument: If 
$y_1, y_2$ both solve \eqref{eq:bilateral_obst_weak}
with $\KK = \KK_\psi^\phi$, then 
using $y_1$ as the test function 
in the EVI for $y_2$ and vice versa and adding the resulting 
inequalities yields 
\begin{equation}
\label{eq:uniqueness}
\begin{aligned}
0 &\geq 
\int_0^T \left \langle \partial_t (y_1 - y_2)  - \Delta (y_1 - y_2) , y_1 - y_2\right  \rangle_{H_0^1(\Omega)}\mathrm{d}t
\\
&= \frac{1}{2}\|(y_1 - y_2)(T)\|_{L^2(\Omega)}^2
+
\| \nabla (y_1 - y_2) \|_{L^2([L^2]^d)}^2.
\end{aligned}
\end{equation}
Due to $y_1, y_2 \in L^2(H_0^1)$ and Poincaré's inequality, 
\eqref{eq:uniqueness} implies $y_1 = y_2$. Thus, 
the solution of \eqref{eq:bilateral_obst_weak} 
with $\KK = \KK_\psi^\phi$
is indeed unique
if it exists. 

To prove the solvability of \eqref{eq:bilateral_obst_weak} 
and the equivalence of \eqref{eq:bilateral_obst_weak}
and \eqref{eq:bilateral_obst_stat_sys}
in the case $\KK = \KK_\psi^\phi$,
we consider the family of semilinear parabolic boundary value problems 
\begin{equation}
\label{eq:sem_approx}
\left \{~
\begin{aligned}
\partial_t y^\varepsilon  - \Delta y^\varepsilon  
+ \frac{1}{\varepsilon}\max(0, y^\varepsilon - \phi) 
+ \frac{1}{\varepsilon}\min(0, y^\varepsilon - \psi) 
&=  u &&\text{in }(0,T) \times \Omega,
\\
y^\varepsilon  &= y_0 &&\text{in }\{0\} \times \Omega,
\\
y^\varepsilon  &= 0 &&\text{on }  (0,T) \times \partial\Omega,
\end{aligned}\right.
\end{equation}
where $\max(0, \cdot)$, $\min(0, \cdot)$
act by superposition and $\varepsilon > 0$.
Using \cite[Theorem 19.16]{Arendt2026form},
the same arguments as in \eqref{eq:uniqueness}, 
and standard regularity results for the heat 
equation (as found in \cite[Proposition 2.3]{ChristofVexler2020}), 
one easily checks that 
\eqref{eq:sem_approx} possesses a unique solution 
$y^{\varepsilon} \in L^2(H_0^1\cap H^2) \cap H^1(L^2)$
for all $\varepsilon > 0$.
Note that, 
due to the regularity \mbox{$y^\varepsilon \in L^2(H_0^1)$}, 
\mbox{Stampacchia's} lemma
\cite[Theorem 5.8.2]{Attouch2006},
and our assumptions on $\psi$,
we have $\min(0, y^\varepsilon - \psi) \in L^2(H_0^1)$.
This allows us to use $\min(0, y^\varepsilon - \psi)$
as the test function in the weak formulation of 
\eqref{eq:sem_approx} to get
\begin{equation*}
\begin{aligned}
&\left (u,  \min(0, y^\varepsilon - \psi)\right )_{L^2(L^2)}
\\
&=
\int_0^T \langle 
\partial_t y^\varepsilon  - \Delta y^\varepsilon  
+ \frac{1}{\varepsilon}\max(0, y^\varepsilon - \phi) 
+ \frac{1}{\varepsilon}\min(0, y^\varepsilon - \psi) ,
\min(0, y^\varepsilon - \psi)
\rangle_{H_0^1(\Omega)} \dd t
\\
&= 
\int_0^T 
\left (
\partial_t (y^\varepsilon-\psi),
\min(0, y^\varepsilon - \psi)
\right )_{L^2(\Omega)}
+
\left (
\nabla (y^\varepsilon-\psi),
\nabla \min(0, y^\varepsilon - \psi)
\right )_{L^2(\Omega)^d} \dd t
\\
&\qquad + \frac{1}{\varepsilon}  \|\min(0, y^\varepsilon - \psi)  \|_{L^2(L^2)}^2 
+
\int_0^T 
\left (
\partial_t \psi  -\Delta \psi,
\min(0, y^\varepsilon - \psi)
\right )_{L^2(\Omega)}
\dd t
\\
&\geq
\frac{1}{\varepsilon} \|\min(0, y^\varepsilon - \psi) \|_{L^2(L^2)}^2 
-
  \| \partial_t \psi -\Delta \psi  \|_{L^2(L^2)}
  \| \min(0, y^\varepsilon - \psi) \|_{L^2(L^2)}.
\end{aligned}
\end{equation*}
Here, in the last estimate, we used the 
formulas for truncations in 
\cite[Lemma A.1]{ChristofVexler2020} and the Cauchy--Schwarz inequality.
Rearranging the above and exploiting the Cauchy--Schwarz inequality 
again
shows that the family of functions 
$
\{ \min(0, y^\varepsilon - \psi)/\varepsilon \}_{\varepsilon > 0}
$
is bounded in $L^2(L^2)$. Along the same lines, 
we also obtain the boundedness of the 
family 
$\{ \max(0, y^\varepsilon - \phi) /\varepsilon \}_{\varepsilon > 0}$
in $L^2(L^2)$. Due to the structure of \eqref{eq:sem_approx}
and the properties of the heat equation, 
this also provides us with the information 
that $\{y^\varepsilon\}_{\varepsilon > 0}$
is bounded in $L^2(H_0^1\cap H^2) \cap H^1(L^2)$. 
Consider now some sequence $\{\varepsilon_n\} \subset (0, \infty)$
with $\varepsilon_n \to 0$
such that $y_{n} := y^{\varepsilon_n} \weakly  y$
holds in $L^2(H_0^1\cap H^2) \cap H^1(L^2)$  
for some $ y$ and such that 
$\min(0, y_n - \psi)/\varepsilon_n \weakly  \lambda_\psi$
and 
$\max(0, y_n - \phi) /\varepsilon_n \weakly  \lambda^\phi$ holds in $L^2(L^2)$ for some $\lambda_\psi, \lambda^\phi$.
(Such a sequence $\{\varepsilon_n\}$ can be found
due to the theorem of Banach--Alaoglu.) Then it necessarily holds 
$\lambda_\psi \leq 0$ and $\lambda^\phi \geq 0$ a.e.\ in $Q$.
Further, we get from \eqref{eq:sem_approx} that
$\partial_t  y  - \Delta  y  = 
u - \lambda_\psi - \lambda^\phi $ holds
a.e.\ in $Q$ and that  $ y(0) = y_0$.
As the lemma of Aubin--Lions \cite[Lemma 7.7]{Roubicek2013}
implies 
$y_{n} \to y$ in $L^2(H_0^1)$,
the convergences 
$\min(0, y_n - \psi), \max(0, y_n - \phi) \to 0$
in $L^2(L^2)$ additionally provide us with $\psi \leq  y \leq \phi$
a.e.\ in $Q$. 
From
$\min(0, y_n - \psi)/\varepsilon_n \weakly  \lambda_\psi$
and 
$\max(0, y_n - \phi) /\varepsilon_n \weakly  \lambda^\phi$ in $L^2(L^2)$
and
$y_{n} \to y$ in $L^2(H_0^1)$,
we lastly also obtain that 
\begin{align*}
&0
\leq
\left (\frac1{\varepsilon_n}\min(0, y_n - \psi), y_n - \psi \right )_{L^2(L^2)}
\to
\left (\lambda_\psi,  y- \psi \right )_{L^2(L^2)},
\\
&0
\leq
\left (\frac1{\varepsilon_n}\max(0, y_n - \phi), y_n - \phi \right )_{L^2(L^2)}
\to
\left (\lambda^\phi,  y- \phi \right )_{L^2(L^2)}.
\end{align*}
In view of the already established inequalities 
$\lambda_\psi \leq 0$, 
$\lambda^\phi \geq 0$,
and
$\psi \leq  y \leq \phi$
a.e.\ in $Q$, the above implies 
$\lambda_\psi( y - \psi) = 0$
and 
$\lambda^\phi(y - \phi) = 0$ a.e.\ 
in $Q$. By collecting everything, we now 
see that $ y$, $\lambda_\psi$, and $\lambda^\phi$
satisfy all of the conditions in \eqref{eq:bilateral_obst_stat_sys}.
This shows that the system
\eqref{eq:bilateral_obst_stat_sys}
admits a solution triple 
$(y, \lambda_\psi, \lambda^\phi)$. 
To complete the proof for the case 
$\smash{\KK = \KK_\psi^\phi}$, it remains to show that,
if $(y, \lambda_\psi, \lambda^\phi)$
solves the system \eqref{eq:bilateral_obst_stat_sys}, then 
$ y$ is also a solution of 
\eqref{eq:bilateral_obst_weak} with 
$\KK = \KK_\psi^\phi$.
(The already proven uniqueness of solutions of \eqref{eq:bilateral_obst_weak}
then yields all of the assertions of the proposition.)
To this end, 
we first note that \eqref{eq:bilateral_obst_stat_sys} 
implies 
$y(0) = y_0$ and $y \in \KK_\psi^\phi \cap H^1(H^{-1})$.
If we multiply the 
PDE in \eqref{eq:bilateral_obst_stat_sys}
with $v -  y$ for 
an arbitrary
\smash{$v \in \KK_\psi^\phi$} and integrate,
then we further get
\begin{equation*}
\begin{aligned}
\int_0^T \left (\partial_t  y - \Delta  y - u, v - y \right )_{L^2(\Omega)} \dd t 
&= 
\int_0^T \int_\Omega -\lambda_\psi (v-y) - \lambda^\phi(v-y) \dd x\dd t
\\
&= \int_0^T \int_\Omega 
-\lambda_\psi (v-\psi) - \lambda^\phi(v-\phi)\dd x \dd t
\geq 0,
\end{aligned}
\end{equation*}
where, in the second line, 
we have exploited that 
$\lambda_\psi( y - \psi) = 0$
and 
$\lambda^\phi(y - \phi) = 0$ holds a.e.\ in $Q$
and that 
$v \in \KK_\psi^\phi$ 
and $\lambda_\psi \leq 0 \leq \lambda^\phi$ a.e.\ in $Q$.
If $(y, \lambda_\psi, \lambda^\phi)$ solves
\eqref{eq:bilateral_obst_stat_sys},
then $y$ thus indeed solves \eqref{eq:bilateral_obst_weak} 
with $\smash{\KK = \KK_\psi^\phi}$.
This completes the proof.
\end{proof}

The next lemma allows 
us to obtain more regularity
for the solution $y$ of \eqref{eq:bilateral_obst_weak}
if $u$, $\psi$, and $\phi$
are more than just $L^2(L^2)$-,
respectively, $L^2(H^2) \cap H^1(L^2)$-regular.

\begin{lemma}[parabolic Lewy--Stampacchia inequalities]%
\label{lem:par_LewyStamp}%
Let $u \in L^2(L^2)$ be given,
let $\KK \in \{\KK_\psi, \KK^\phi, \KK_\psi^\phi\}$,
and let $y$ denote the unique solution of 
\eqref{eq:bilateral_obst_weak}. 
Then:
\begin{enumerate}[label=\roman*)]
\item in the case $\KK = \KK_\psi$, it holds 
    \begin{equation}
    \label{eq:par_Lewy_Stamp_uni_lower}
       \begin{aligned}
            u
            \leq 
            \partial_t y
            -\Delta y
            \leq\max(\partial_t \psi -\Delta\psi, u) 
          \quad\text{ a.e.\ in }Q;
        \end{aligned}
        \end{equation}
\item in the case $\KK = \KK^\phi$, it holds 
    \begin{equation}
    \label{eq:par_Lewy_Stamp_uni_upper}
       \begin{aligned}
            \min(\partial_t \phi -\Delta\phi, u)
            \leq 
            \partial_t y
            -\Delta y
            \leq u
          \quad\text{ a.e.\ in }Q;
        \end{aligned}
        \end{equation}
\item in the case $\KK = \KK_\psi^\phi$, it holds 
    \begin{equation}
    \label{eq:par_Lewy_Stamp_bilateral}
       \begin{aligned}
            \min(\partial_t \phi -\Delta\phi, u)
            \leq 
            \partial_t y
            -\Delta y
            \leq\max(\partial_t \psi -\Delta\psi, u) 
          \quad\text{ a.e.\ in }Q.
        \end{aligned}
        \end{equation}
\end{enumerate}
\end{lemma}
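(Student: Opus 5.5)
We will obtain \eqref{eq:par_Lewy_Stamp_uni_lower}--\eqref{eq:par_Lewy_Stamp_bilateral} from the complementarity systems of \cref{prop:ex_uniqu}, supplemented by the penalty approximation \eqref{eq:sem_approx} used in its proof. By \cref{prop:ex_uniqu}, $\partial_t y - \Delta y = u - \lambda_\psi - \lambda^\phi$ a.e.\ in $Q$, where $\lambda^\phi \equiv 0$ for $\KK = \KK_\psi$ and $\lambda_\psi \equiv 0$ for $\KK = \KK^\phi$. The one-sided bounds $u \leq \partial_t y - \Delta y$ (case i)) and $\partial_t y - \Delta y \leq u$ (case ii)) follow at once from $\lambda_\psi \leq 0$ and $\lambda^\phi \geq 0$.

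For the nontrivial bounds, consider case iii) and the upper estimate; the lower estimate then follows by the symmetry $y \mapsto -y$, $u \mapsto -u$, $\psi \mapsto -\phi$, $\phi\mapsto -\psi$. Cases i) and ii) are the special cases in which one penalty term in \eqref{eq:sem_approx} is dropped. The claimed inequality is equivalent to
\[
-\lambda_\psi - \lambda^\phi \leq \max(\partial_t\psi - \Delta\psi - u, 0).
\]
Since $-\lambda^\phi \leq 0$, it suffices to prove $-\lambda_\psi \leq (\partial_t\psi - \Delta\psi - u)^+$ a.e. I plan to establish this at the level of the penalized problems, namely
\[
-\tfrac1\varepsilon \min(0,y^\varepsilon-\psi) \leq (\partial_t\psi-\Delta\psi-u)^+ =: f^+ ,
\]
and then pass to the weak limit along the sequence from the proof of \cref{prop:ex_uniqu}. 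Weak limits in $L^2(L^2)$ preserve a.e.\ inequalities, because the relevant sets are closed and convex.

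To get the penalized bound, set $w := \psi - y^\varepsilon$. By \eqref{eq:sem_approx}, $w$ satisfies
\[
\partial_t w - \Delta w + \tfrac1\varepsilon\max(0,w) = f + \tfrac1\varepsilon\max(0,y^\varepsilon-\phi) \quad\text{a.e.\ in } Q,
\]
with $w(0) = \psi(0) - y_0 \leq 0$ and $w = \psi \leq 0$ on the lateral boundary. The latter holds in the trace sense because $\psi \leq v$ for some $v\in L^2(H_0^1)$. On the set $\{w>0\}$ we have $y^\varepsilon < \psi \leq \phi$, so the term $\max(0,y^\varepsilon-\phi)$ vanishes there.

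Now test this equation with $(w^+ - \varepsilon k)^+$, where $k := f^+$. This is only straightforward if $k$ is a constant, which is the \textbf{main obstacle}: $f^+$ is only an $L^2(L^2)$ function, not a constant. I would therefore try a comparison argument for the auxiliary linear problem
\[
\partial_t z - \Delta z + \tfrac1\varepsilon z = f^+, \qquad z(0)=0,\quad z|_{\partial\Omega}=0.
\]
By the weak maximum principle (testing with $(w-z)^+$, which lies in $L^2(H_0^1)$ because $w\le 0\le z$ on the boundary and at $t=0$), we get $w^+ \leq z$. Hence $\tfrac1\varepsilon w^+ \leq \tfrac1\varepsilon z$. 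It then remains to show that $\tfrac1\varepsilon z \rightharpoonup \zeta$ in $L^2(L^2)$ with some $\zeta \leq f^+$. This is the delicate part: $\tfrac1\varepsilon z$ is the resolvent-type approximation $\varepsilon^{-1}(\partial_t - \Delta + \varepsilon^{-1})^{-1} f^+$, which converges strongly to $f^+$ in $L^2(L^2)$. This holds by the $L^2$-contractivity of the operator $\varepsilon^{-1}(\partial_t-\Delta+\varepsilon^{-1})^{-1}$ (obtained by testing with $z$), combined with a density argument using smooth functions with compact support in $Q$, for which the convergence is direct.

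Granting this, passing to the limit gives $-\lambda_\psi \leq f^+$ a.e.\ in $Q$. This yields $\partial_t y - \Delta y = u - \lambda_\psi - \lambda^\phi \leq u + f^+ = \max(\partial_t\psi - \Delta\psi, u)$, as claimed.
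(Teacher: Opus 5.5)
Your proof is correct, but it takes a different route from the paper's.

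\emph{The paper's route.} The paper does not return to the penalization. For $\KK=\KK_\psi^\phi$ it solves an auxiliary unilateral problem whose lower obstacle is the solution $y$ itself and whose right-hand side is $\min(\partial_t\phi-\Delta\phi,u)$. Testing with $\min(\phi,\tilde y)$ shows that the auxiliary solution satisfies $\tilde y\le\phi$, and the usual monotonicity argument gives $\tilde y=y$. The lower bound is then read off from the sign of the auxiliary multiplier. The unilateral estimates follow afterwards by viewing, e.g., the solution for $\KK^\phi$ as the bilateral solution with lower obstacle $y$.

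\emph{Your route.} You work inside the proof of \cref{prop:ex_uniqu} and compare $w=\psi-y^\varepsilon$ with the solution $z_\varepsilon$ of $\partial_t z-\Delta z+\varepsilon^{-1}z=f^+$. You then use that $\varepsilon^{-1}z_\varepsilon=(I+\varepsilon A)^{-1}f^+\to f^+$ in $L^2(L^2)$, where $A$ is the maximal monotone heat operator with homogeneous initial and boundary data. Your contractivity-plus-density argument establishes this convergence correctly. At the penalized level you end up proving $\varepsilon^{-1}w^+\le\varepsilon^{-1}z_\varepsilon$, not the bound by $f^+$ you first announce. That bound indeed fails in general for nonconstant $f^+$, but the weak-limit step only needs the former.

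\emph{A step to make explicit.} You should state $z_\varepsilon\ge 0$, which follows from the weak maximum principle since $f^+\ge 0$. It is needed for two reasons:
\begin{itemize}
\item to guarantee $\{w>z_\varepsilon\}\subset\{w>0\}$, so that the $\phi$-penalty term vanishes on the support of $(w-z_\varepsilon)^+$;
\item to pass from $w\le z_\varepsilon$ to $w^+\le z_\varepsilon$.
\end{itemize}

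\emph{What each approach buys.} Your approach treats all three cases uniformly and yields separate bounds on the multipliers produced by the penalization, $0\le-\lambda_\psi\le(\partial_t\psi-\Delta\psi-u)^+$ and $0\le\lambda^\phi\le(u-\partial_t\phi+\Delta\phi)^+$. These multipliers need not be unique where the contact sets overlap. The price is that it depends on the internal structure of the existence proof and on an additional approximation argument. The paper's proof uses \cref{prop:ex_uniqu} purely as a black box (existence, uniqueness, complementarity characterization) and is a direct transcription of the elliptic Lewy--Stampacchia argument.
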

\begin{proof}
To obtain the assertions of the lemma, 
we extend the arguments used in the proofs 
of \cite[Lemma~6.4]{ChristofWachsmuth2025} and \cite[Theorem~4.35]{Troianiello1987} to the parabolic setting. 

We begin with the case $\KK = \KK_\psi^\phi$:
Let $u \in L^2(L^2)$ be given and 
let $y$ be the solution of
\eqref{eq:bilateral_obst_weak}
with $\smash{\KK = \KK_\psi^\phi}$. 
To establish
\eqref{eq:par_Lewy_Stamp_bilateral}, we consider the auxiliary 
problem 
\begin{equation}
\label{eq:aux_EVI}
\left \{
~
\begin{aligned}
&\text{Find } \tilde y \in \KK_y \cap H^1(H^{-1})
\text{ such that } \tilde y(0) = y_0 \text{ and }
\\
&\int_0^T \left \langle \partial_t \tilde y  - \Delta \tilde y - 
\min(\partial_t \phi - \Delta \phi, u), v - \tilde y\right  \rangle_{H_0^1(\Omega)}\mathrm{d}t \geq 0 
~~\quad\forall v \in \KK_y, 
\end{aligned}
\right.
\end{equation}
where $\min(\cdot, \cdot)$ again acts by superposition 
and $\KK_y := \{v \in L^2(H_0^1) \mid y \leq v  \text{ a.e.\ in } Q\}$.
Due to 
\cref{prop:ex_uniqu}, our assumptions on $\phi$,
and the $L^2(H_0^1\cap H^2) \cap H^1(L^2)$-regularity
of $y$, we know that 
\eqref{eq:aux_EVI} 
has a unique solution $\tilde y \in L^2(H_0^1\cap H^2) \cap H^1(L^2)$.
From Stampacchia's lemma \cite[Theorem 5.8.2]{Attouch2006} 
and our assumptions on $\phi$, 
it further follows that 
$\min(\phi, \tilde y) \in \KK_y$.
This allows us to choose 
$\min(\phi, \tilde y)$ as the test function in 
\eqref{eq:aux_EVI} and to obtain that 
            \begin{align*}
                0 &\leq \int_0^T\langle \partial_t\tilde{y}-\Delta \tilde{y} - \min(\partial_t \phi - \Delta \phi, u), \min(\phi, \tilde{y})-\tilde{y}\rangle_{H_0^1(\Omega)} \dd t\\
                &=
                \left ( 
                \partial_t\tilde{y}-\Delta \tilde{y} - \min(\partial_t \phi - \Delta \phi, u), 
                \min(0, \phi - \tilde y)
                \right )_{L^2(L^2)}
                \\
                &\leq
                \left ( 
                \partial_t\tilde{y}-\Delta \tilde{y}, 
                \min(0, \phi - \tilde y)
                \right )_{L^2(L^2)}
                -
                \left ( \partial_t \phi - \Delta \phi, 
                \min(0, \phi - \tilde y)
                \right )_{L^2(L^2)}
                \\
                &=
                -\left ( 
                \partial_t (\tilde{y} -\phi) -\Delta (\tilde{y} - \phi), 
                \max(0,  \tilde y -  \phi)
                \right )_{L^2(L^2)}
                \\
                &= 
                 - \frac{1}{2}\|\max(0,  \tilde y(T) -  \phi(T))\|^2_{L^2(\Omega)}
                 - \|\nabla \max(0,  \tilde y -  \phi)\|^2_{L^2([L^2]^d)},
            \end{align*}
            where, in the last step, we have again used 
            the formulas for truncations from
            \cite[Lemma A.1]{ChristofVexler2020};
            cf.\ the proof of \cref{prop:ex_uniqu}. 
            Due to $\max(0,  \tilde y -  \phi) \in L^2(H_0^1)$
            and Poincaré's inequality, 
            the above implies  $\max(0,  \tilde y -  \phi) = 0$.
            In combination with $\tilde y \geq y \geq \psi$ a.e.\ in $Q$,
            this shows $\tilde y \in \KK_\psi^\phi$. 
            Using $\tilde y \in \KK_\psi^\phi$ as the test
            function in the EVI for $y$ 
            and $y$ as the test function in 
            the EVI for $\tilde y$,
            adding the resulting 
            inequalities,
            and exploiting that $\tilde y \geq y $
            holds 
            a.e.\ in $Q$
            now provides 
            us with 
            \begin{align*}
            0 &\geq \int_0^T ( u - \min(\partial_t\phi -\Delta \phi, u), y - \tilde{y} )_{L^2(\Omega)}\mathrm{d}t
            \\
            &\geq \int_0^T\langle \partial_ty - \partial_t \tilde{y}-\Delta y + \Delta \tilde{y}, y - \tilde{y} \rangle_{H_0^1(\Omega)}\mathrm{d}t
            \\
            & =
            \frac12  \| ( y - \tilde{y})(T) \|_{L^2(\Omega)}^2
            +
            \|\nabla ( y - \tilde{y})\|^2_{L^2([L^2]^d)}.
            \end{align*}
            Thus, $\tilde y = y$.  
            From the complementarity system \eqref{eq:unilateral_lower_obst_stat_sys}
            for \eqref{eq:aux_EVI}, we now obtain 
            \[
                0 \leq 
                \partial_t \tilde y  - \Delta \tilde y  - \min(\partial_t \phi - \Delta \phi, u) = 
                \partial_t  y  - \Delta y  - \min(\partial_t \phi - \Delta \phi, u)
                \text{ a.e.\ in } Q,
            \]
            and, consequently, 
            the first inequality 
            $\min(\partial_t \phi - \Delta \phi, u) \leq \partial_t  y  - \Delta y$ a.e.\ in $Q$ in \eqref{eq:par_Lewy_Stamp_bilateral}.
            The second inequality in \eqref{eq:par_Lewy_Stamp_bilateral}
            is obtained along the same lines.
            This proves the assertion for the case 
            $\smash{\KK = \KK_\psi^\phi}$. 

            To obtain the inequality 
            \eqref{eq:par_Lewy_Stamp_uni_upper}
            for the case $\KK = \KK^\phi$,
            it suffices to note that 
            the solution $y$ of \eqref{eq:bilateral_obst_weak} 
            with $\KK = \KK^\phi$ is trivially 
also the solution of the bilateral parabolic obstacle problem 
with lower obstacle $y$ and upper obstacle $\phi$.
(Note that $y$ and $\phi$
also satisfy the conditions on the obstacles 
in Assumption \ref{ass:standing:obstacle:prelims}.)
In view of \eqref{eq:par_Lewy_Stamp_bilateral}
and the 
inequality 
$\partial_t y  - \Delta y \leq  u$ a.e.\ in $Q$
obtained from \eqref{eq:unlateral_upper_obst_stat_sys}, 
this implies that 
    \begin{equation*}
       \begin{aligned}
        \min(\partial_t \phi -\Delta\phi, u)
        \leq 
         \partial_t y
            -\Delta y
            \leq\max(\partial_t y -\Delta y, u) 
            =u \text{ a.e.\ in } Q,
        \end{aligned}
        \end{equation*}
        which proves \eqref{eq:par_Lewy_Stamp_uni_upper}. 
The inequalities in \eqref{eq:par_Lewy_Stamp_uni_lower} 
are proved analogously. 
\end{proof}

As a consequence of \cref{lem:par_LewyStamp}, we obtain, for example, the following result.

\begin{corollary}[higher regularity for solutions]%
\label{cor:continuity_solutions}%
Suppose, in addition to our standing assumptions, 
that $y_0 \in H_0^1(\Omega) \cap H^2(\Omega)$  
and $\psi, \phi \in W^{1,\infty}(L^2) \cap L^\infty(H^2)$.
Assume further that $u \in L^\infty(L^2)$
and that $\KK \in \{\KK_\psi, \KK^\phi, \KK_\psi^\phi\}$.
Then the unique solution 
$y$ of the problem \eqref{eq:bilateral_obst_weak} 
satisfies $y \in C(\bar Q)$
and $u - \partial_t y + \Delta y \in L^\infty(L^2)$.
\end{corollary}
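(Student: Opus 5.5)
We will show that $f := \partial_t y - \Delta y$ lies in $L^\infty(L^2)$ and then derive $y \in C(\bar Q)$ from maximal regularity of the heat equation. Let $\KK = \KK_\psi^\phi$; the unilateral cases are identical, using \eqref{eq:par_Lewy_Stamp_uni_lower} or \eqref{eq:par_Lewy_Stamp_uni_upper} in place of \eqref{eq:par_Lewy_Stamp_bilateral}.

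\emph{Step 1: $f \in L^\infty(L^2)$.} The assumptions $\psi, \phi \in W^{1,\infty}(L^2) \cap L^\infty(H^2)$ give
\[
\partial_t \psi - \Delta \psi \in L^\infty(L^2), \qquad \partial_t \phi - \Delta \phi \in L^\infty(L^2).
\]
By \cref{lem:par_LewyStamp}, $f$ is squeezed a.e.\ in $Q$ between $\min(\partial_t \phi - \Delta \phi, u)$ and $\max(\partial_t \psi - \Delta \psi, u)$. Hence, a.e.\ in $Q$,
\[
|f| \leq |u| + |\partial_t \phi - \Delta \phi| + |\partial_t \psi - \Delta \psi|,
\]
and the right-hand side lies in $L^\infty(L^2)$. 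Therefore $f \in L^\infty(L^2)$, and consequently $u - \partial_t y + \Delta y \in L^\infty(L^2)$. This is the second claim.

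\emph{Step 2: regularity of $y$.} By \cref{prop:ex_uniqu}, $y$ is the solution of the heat equation
\[
\partial_t y - \Delta y = f \ \text{ in } Q, \qquad y(0) = y_0, \qquad y = 0 \ \text{ on } (0,T) \times \partial\Omega,
\]
with $f \in L^\infty(L^2) \subset L^p(L^2)$ for every $p < \infty$ and $y_0 \in H_0^1(\Omega) \cap H^2(\Omega)$. The Dirichlet Laplacian on $L^2(\Omega)$ generates an analytic semigroup, and since $\Omega$ is convex or $C^{1,1}$, its domain is $H_0^1(\Omega) \cap H^2(\Omega)$. Maximal $L^p$-regularity on the Hilbert space $L^2(\Omega)$ then gives
\[
y \in W^{1,p}(L^2) \cap L^p(H^2 \cap H_0^1), \qquad \text{provided } y_0 \in (L^2, H^2 \cap H_0^1)_{1-1/p,p}.
\]
This compatibility holds because $y_0$ lies in the domain itself. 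The trace method then yields $y \in C([0,T]; (L^2, H^2 \cap H_0^1)_{1-1/p,p})$, and this interpolation space embeds into $H^{2-2/p-\epsilon}(\Omega)$.

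\emph{Step 3: continuity on $\bar Q$.} Since $d \leq 3$, choose $p$ large enough that $2 - 2/p - \epsilon > 3/2$. Then $H^{2-2/p-\epsilon}(\Omega) \hookrightarrow C(\bar\Omega)$, so $y \in C([0,T]; C(\bar \Omega)) = C(\bar Q)$.

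\emph{Alternative route.} One can avoid interpolation theory. Split $y = y_1 + y_2$ with $y_1(t) = e^{t\Delta} y_0$, which lies in $C([0,T]; H^2)$ because $y_0$ is in the domain of the generator, and $y_2$ the solution with zero initial data. Analytic-semigroup smoothing gives
\[
\|y_2(t) - y_2(s)\|_{D((-\Delta)^\theta)} \leq C\,|t-s|^{1-\theta-\epsilon}\,\|f\|_{L^\infty(L^2)} \quad \text{for } \theta < 1 .
\]
Taking $\theta > 3/4$ makes $D((-\Delta)^\theta) \hookrightarrow H^{2\theta} \hookrightarrow C(\bar\Omega)$, so again $y \in C(\bar Q)$.

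\emph{Main obstacle.} Step 1 is immediate from \cref{lem:par_LewyStamp}. The real difficulty is Step 2: citing a maximal regularity or interpolation result that is valid for convex or $C^{1,1}$ domains. This requires identifying the domain of the Dirichlet Laplacian, or of its fractional powers, with (fractional) $H^2$-type spaces, which is where the domain assumption in Assumption \ref{ass:standing:obstacle:prelims} is used.
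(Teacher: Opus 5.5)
Your proposal is correct and follows the paper's proof: the Lewy--Stampacchia inequalities give the pointwise bound $|\partial_t y - \Delta y| \le |u| + |\partial_t\phi - \Delta\phi| + |\partial_t\psi - \Delta\psi|$, hence $u - \partial_t y + \Delta y \in L^\infty(L^2)$, and then $y$ solves a heat equation with an $L^\infty(L^2)$ right-hand side and an $H_0^1(\Omega)\cap H^2(\Omega)$ initial datum. The only difference is in the last step: the paper gets $y \in C(\bar Q)$ by citing \cite[Theorem 3.1]{Rehberg2015}, whereas you prove this heat-equation regularity yourself via maximal $L^p$-regularity and interpolation (or analytic-semigroup smoothing), using $D(\Delta_D) = H_0^1(\Omega)\cap H^2(\Omega)$ and $d \le 3$, which is a valid self-contained justification.
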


\begin{proof}
In the considered 
situation, we have 
$
 u,  \partial_t \phi -\Delta\phi , 
 \partial_t \psi -\Delta\psi  \in L^\infty(L^2).
$
Due to \cref{lem:par_LewyStamp},
this implies $u - \partial_t y + \Delta y \in L^\infty(L^2)$. In particular, 
$y$ solves a heat equation 
with homogeneous spatial Dirichlet boundary 
conditions,
a right-hand side in $L^\infty(L^2)$, 
and an initial value $y_0 \in H_0^1(\Omega) \cap H^2(\Omega)$. 
Invoking the regularity result
for the heat equation found in 
\cite[Theorem 3.1]{Rehberg2015}
now provides us with $y \in C(\bar Q)$.
\end{proof}

Having discussed the 
existence, uniqueness, and regularity 
of solutions of
\eqref{eq:bilateral_obst_weak}
for the cases $\KK = \KK_\psi$, 
$\KK = \KK^\phi$, and $\KK = \KK_\psi^\phi$,
we now turn our attention to 
the induced solution operators.
In addition to the functions  
that take $u$ to $y$, we also require 
the \emph{multiplier maps} associated with 
\eqref{eq:bilateral_obst_weak}
for our analysis.

\begin{definition}[solution operators and multiplier maps]%
\label{def:sol_ops}%
We denote by:
\begin{enumerate}[label=\roman*)]
\item $S_\psi$ the solution map 
$u \mapsto y$ of \eqref{eq:bilateral_obst_weak} 
in the case $\KK = \KK_\psi$;
\item  $S^\phi$ the solution map
$u \mapsto y$ of \eqref{eq:bilateral_obst_weak} 
in the case $\KK = \KK^\phi$;
\item  $S_\psi^\phi$ the solution map
$u \mapsto y$ of \eqref{eq:bilateral_obst_weak} 
in the case $\KK = \KK_\psi^\phi$;
\item 
$M_\psi$
the multiplier map defined by 
$M_\psi(u) := u - \partial_t S_\psi(u) + \Delta S_\psi(u)$;
\item 
$M^\phi$ 
the multiplier map defined by 
$M^\phi(u) := u - \partial_t S^\phi(u) + \Delta S^\phi(u)$;
\item 
$M_\psi^\phi$ 
the multiplier map defined by 
$M_\psi^\phi(u) := u - \partial_t S_\psi^\phi(u) + \Delta S_\psi^\phi(u)$.
\end{enumerate}
\end{definition}

Note that, by definition,
the maps
$M_\psi$, $M^\phi$, and 
\smash{$M_\psi^\phi$} reproduce precisely 
the slack variables 
$\lambda_\psi$, $\lambda^\phi$, and \smash{$\lambda_\psi + \lambda^\phi$}
in the complementarity systems 
\eqref{eq:unilateral_lower_obst_stat_sys}, 
\eqref{eq:unlateral_upper_obst_stat_sys}, 
and 
\eqref{eq:bilateral_obst_stat_sys}, 
respectively.
From 
 \cref{prop:ex_uniqu},
 we get that 
 the solution and multiplier maps  
are well defined as 
functions
$S_\psi, S^\phi, \smash{S_\psi^\phi}\colon
L^2(L^2) \to L^2(H_0^1\cap H^2) \cap H^1(L^2)$
and
$M_\psi, M^\phi, \smash{M_\psi^\phi}\colon
L^2(L^2) \to L^2(L^2)$, respectively. 
Under the regularity assumptions 
of \cref{cor:continuity_solutions},
we can further consider the solution and multiplier 
maps 
as operators 
$S_\psi, S^\phi, \smash{S_\psi^\phi}\colon
L^\infty(L^2) \to C(\bar Q)$
and 
$M_\psi, M^\phi, \smash{M_\psi^\phi}\colon
L^\infty(L^2) \to L^\infty(L^2)$. 
In what follows, it will always be clear 
from the context which domain and image spaces 
we choose for
the mappings in 
\cref{def:sol_ops}.

The next lemma shows that the functions 
$S_\psi, S^\phi$, and $S_\psi^\phi$
are globally Lipschitz.

\begin{lemma}[{Lipschitz continuity 
with values in $C(L^2) \cap L^2(H^1_0)$}]%
\label{lem:LipschitzPrimitiv}%
Suppose that 
$S \in \{S_\psi, S^\phi, S_\psi^\phi\}$.
Then $S$
is globally Lipschitz continuous
in the sense that there exists a 
constant $C = C(\Omega, T)>0$ satisfying  
    \begin{equation}
    \label{eq:LipEnergy}
    \begin{aligned}
        &
        \|S(u_1)-S(u_2)\|_{C(L^2)}
        + \|S(u_1)-S(u_2)\|_{L^2(H^1_0)}\\
        &\qquad \leq C \min(\|u_1-u_2\|_{L^2(H^{-1})},\|u_1-u_2\|_{L^1( L^2)})
        \qquad \forall u_1, u_2 \in L^2(L^2).
    \end{aligned}
    \end{equation}
\end{lemma}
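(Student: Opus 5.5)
The plan is the standard monotonicity (energy) argument already used for uniqueness in \cref{prop:ex_uniqu}, now with two different right-hand sides $u_1, u_2$. Fix $\KK \in \{\KK_\psi, \KK^\phi, \KK_\psi^\phi\}$ and set $y_i := S(u_i)$. Since the constraints in $\KK$ are pointwise in time, I will not test the EVI on all of $(0,T)$. Instead I will use the localized comparison functions
\[
v := y_2 \mathds{1}_{(0,t)} + y_1 \mathds{1}_{[t,T)}, \qquad v := y_1 \mathds{1}_{(0,t)} + y_2 \mathds{1}_{[t,T)}
\]
for $t \in (0,T]$. Both belong to $\KK$ and to $L^2(H_0^1)$; note that $H^1$-regularity in time is not needed for test functions in \eqref{eq:bilateral_obst_weak}. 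Alternatively, I could use the complementarity systems of \cref{prop:ex_uniqu}. Let $w := y_1 - y_2$ and $\lambda_i := u_i - \partial_t y_i + \Delta y_i$. By \eqref{eq:bilateral_obst_stat_sys} and its unilateral counterparts, $\lambda_i = \lambda_{i,\psi} + \lambda_i^\phi$ with the sign and complementarity conditions. A pointwise check then gives $(\lambda_1 - \lambda_2) w \ge 0$ a.e.\ in $Q$. For example, $\lambda_{1,\psi}(y_1 - y_2) = \lambda_{1,\psi}(\psi - y_2) \ge 0$, because $\lambda_{1,\psi} \le 0$ and $\psi - y_2 \le 0$; the other terms are handled the same way. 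This pointwise monotonicity is the cleanest route, and I will use it.

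From $\partial_t w - \Delta w = (u_1 - u_2) - (\lambda_1 - \lambda_2)$ a.e.\ in $Q$ and $w(0) = 0$, I multiply by $w$ and integrate over $(0,t)\times\Omega$. Using $w \in L^2(H_0^1 \cap H^2) \cap H^1(L^2)$, the Lions--Magenes formula, and integration by parts, this yields for every $t \in [0,T]$
\[
\tfrac12 \|w(t)\|_{L^2(\Omega)}^2 + \int_0^t \|\nabla w\|_{L^2(\Omega)^d}^2 \,\dd s \le \int_0^t (u_1 - u_2, w)_{L^2(\Omega)} \,\dd s .
\]
The two estimates then follow from bounding the right-hand side in two ways. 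For the $L^2(H^{-1})$ bound, I bound the right-hand side by $\|u_1 - u_2\|_{L^2(H^{-1})} \|w\|_{L^2(H_0^1)}$. Young's inequality and Poincaré's inequality (with the norm $\|\nabla\cdot\|$ on $H_0^1$) then absorb the gradient term, which gives $\sup_t \|w(t)\|^2 + \|w\|_{L^2(H_0^1)}^2 \le C\|u_1 - u_2\|_{L^2(H^{-1})}^2$. For the $L^1(L^2)$ bound, I estimate the right-hand side by $\|u_1 - u_2\|_{L^1(L^2)} \sup_s \|w(s)\|_{L^2}$. Taking the supremum over $t$ on the left gives $\tfrac12 \|w\|_{C(L^2)}^2 \le \|u_1 - u_2\|_{L^1(L^2)} \|w\|_{C(L^2)}$, and hence $\|w\|_{C(L^2)} \le 2\|u_1 - u_2\|_{L^1(L^2)}$. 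Inserting this back gives $\|\nabla w\|_{L^2}^2 \le 2\|u_1 - u_2\|_{L^1(L^2)}^2$. Continuity of $w$ into $L^2(\Omega)$ comes from $H^1(L^2)$. Combining the two bounds yields \eqref{eq:LipEnergy} with the minimum, and the constant depends only on the Poincaré constant of $\Omega$ and on $T$.

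The only delicate step is justifying the pointwise inequality $(\lambda_1 - \lambda_2) w \ge 0$ uniformly for all three sets $\KK$, together with the time-localized energy identity. With the complementarity systems from \cref{prop:ex_uniqu} and the regularity $y_i \in H^1(L^2) \cap L^2(H^2)$, the latter is just $\int_0^t (\partial_t w, w) = \tfrac12 \|w(t)\|^2$, so I expect no real obstacle here.
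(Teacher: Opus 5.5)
Your proof is correct and follows the paper's approach. The paper only writes ``completely analogous to \cite[Theorem 2.3]{Christof2019}'', and that argument is this standard one. You derive the time-localized energy inequality $\frac{1}{2}\|w(t)\|_{L^2(\Omega)}^2+\int_0^t\|\nabla w\|_{L^2(\Omega)^d}^2\,\dd s\le\int_0^t(u_1-u_2,w)_{L^2(\Omega)}\,\dd s$, then bound the right-hand side once by $L^2(H^{-1})$--$L^2(H_0^1)$ duality with Young and Poincar\'e, and once by $L^1(L^2)$--$C(L^2)$ duality. It is immaterial whether you obtain the inequality from time-localized test functions in the EVI or from the pointwise sign $(\lambda_1-\lambda_2)(y_1-y_2)\ge 0$ of the complementarity systems, which are available here since $u_i\in L^2(L^2)$.
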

\begin{proof}
The proof is completely analogous 
to that of \cite[Theorem 2.3]{Christof2019}.
(Recall that $H^1(L^2) \hookrightarrow C(L^2)$
so that the left-hand side of \eqref{eq:LipEnergy}
makes sense.)
\end{proof}

For the multiplier maps, we have the following 
Lipschitz continuity result
that already hints at
how \eqref{eq:bilateral_obst_weak} 
is connected to the analysis of 
\cref{sec:3}; cf.\ \eqref{eq:Lipschitz_estimate_abstract}.

\begin{lemma}[global 1-Lipschitz continuity of the multiplier maps]%
\label{lem:mult_contractive}%
Suppose that $(S, M) \in 
\{(S_\psi, M_\psi),(S^\phi, M^\phi),
(S_\psi^\phi, M_\psi^\phi)\}$. 
Then, for all 
$u_1, u_2 \in L^2(L^2)$, we have
\begin{equation}
\label{eq:M_Lip}
\|
M(u_1) - M(u_2)
\|_{L^1(L^1)}
 +
 \|
S(u_1)(T) - S(u_2)(T)
\|_{L^1(\Omega)}
\leq
\|
u_1 - u_2
\|_{L^1(L^1)}.
\end{equation}
\end{lemma}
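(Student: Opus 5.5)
Write $y_i := S(u_i)$, $\lambda_i := M(u_i)$, $w := y_1 - y_2$ and $\mu := \lambda_1 - \lambda_2$. By \cref{def:sol_ops} and \cref{prop:ex_uniqu} we have $w \in L^2(H_0^1\cap H^2)\cap H^1(L^2)$, $w(0)=0$, and
\[
\partial_t w - \Delta w = (u_1 - u_2) - \mu \quad \text{a.e.\ in } Q.
\]
The idea, as announced in the introduction, is to test this equation with a mollified signum of $w$. Take $\sigma_\varepsilon(s) := \max(-1,\min(1,s/\varepsilon))$, which is Lipschitz and nondecreasing, and let $\Sigma_\varepsilon$ be its primitive with $\Sigma_\varepsilon(0)=0$. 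Then $\sigma_\varepsilon(w) \in L^2(H_0^1)$, and the chain rule gives
\[
\int_0^T (\partial_t w, \sigma_\varepsilon(w))_{L^2(\Omega)}\,\dd t = \int_\Omega \Sigma_\varepsilon(w(T))\,\dd x ,
\]
\[
\int_0^T (-\Delta w,\sigma_\varepsilon(w))_{L^2(\Omega)}\,\dd t = \int_Q \sigma_\varepsilon'(w)|\nabla w|^2 \ge 0 .
\]
Testing therefore yields
\[
\int_\Omega \Sigma_\varepsilon(w(T))\,\dd x + \int_Q \mu\,\sigma_\varepsilon(w) \le \int_Q (u_1-u_2)\sigma_\varepsilon(w) \le \|u_1-u_2\|_{L^1(L^1)} .
\]

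The key step is the sign structure of the multiplier term. I would show pointwise a.e.\ that
\[
\mu\,\sgn(w) \ge 0 \quad\text{a.e.\ in } Q, \qquad \mu = 0 \quad \text{a.e.\ on } \{w=0\}.
\]
Given these, letting $\varepsilon\to 0$ gives $\Sigma_\varepsilon(w(T))\to |w(T)|$ and $\mu\sigma_\varepsilon(w)\to |\mu|$ a.e., with domination by $|w(T)|$ and $|\mu|$. Dominated convergence then produces exactly \eqref{eq:M_Lip}.

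For the unilateral case $(S_\psi,M_\psi)$, consider a point where $w>0$, i.e., $y_1>y_2\ge\psi$. Then $y_1>\psi$, so $\lambda_1=0$ by complementarity, and hence $\mu=-\lambda_2\ge 0$. The case $w<0$ is symmetric. The case $(S^\phi,M^\phi)$ is analogous. For the bilateral case, take $\mu=(\lambda_{\psi,1}+\lambda^\phi_1)-(\lambda_{\psi,2}+\lambda^\phi_2)$. Where $w>0$, we get $y_1>\psi$, so $\lambda_{\psi,1}=0$, and $y_2<\phi$, so $\lambda^\phi_2=0$. Hence $\mu=\lambda^\phi_1-\lambda_{\psi,2}\ge0$.

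On $\{w=0\}$ I would use the Stampacchia-type fact that, for $w\in L^2(H^2)\cap H^1(L^2)$, one has $\partial_t w=0$ and $\Delta w=0$ a.e.\ on $\{w=0\}$. This gives $\mu=u_1-u_2$ there. Those points then contribute $\int_{\{w=0\}}|u_1-u_2|$ to the right-hand side rather than zero, so the accounting has to be redone. The cleaner route is to keep the term $\int_Q(u_1-u_2)\sigma_\varepsilon(w)$ unbounded-from-below-free. In the limit it becomes
\[
\int_{\{w\neq0\}}(u_1-u_2)\sgn(w),
\]
while on $\{w=0\}$ we have $|\mu|=|u_1-u_2|$. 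Splitting $Q$ into $\{w\ne0\}$ and $\{w=0\}$ and adding $\int_{\{w=0\}}|\mu|=\int_{\{w=0\}}|u_1-u_2|$ to both sides of the limit inequality then gives \eqref{eq:M_Lip}.

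The main obstacle is justifying the a.e.\ identities on the level set $\{w=0\}$, namely $\partial_t w=0$ and $\Delta w=0$ a.e. For $\partial_t w$ this follows from the vanishing of Sobolev derivatives on level sets, applied in time for a.e.\ $x$. For $\Delta w$ it follows by applying the same fact twice spatially, using $w(t)\in H^2(\Omega)$: first $\nabla w=0$ a.e.\ on $\{w=0\}$, then the second derivatives of $w$ vanish a.e.\ on $\{\nabla w=0\}$. A smaller technical point is justifying the chain rule for $\Sigma_\varepsilon$, which is standard for $w\in H^1(L^2)$ because $\sigma_\varepsilon$ is Lipschitz.
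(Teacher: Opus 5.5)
Your proposal is correct and follows the paper's proof essentially step by step: you test with a mollified signum, pass to the limit, use the complementarity-based sign relation $\mu\,\sgn(w)=|\mu|$ on $\{w\neq 0\}$, and add the identity $\mu=u_1-u_2$ a.e.\ on $\{w=0\}$ (from the vanishing of $\partial_t w$ and $\Delta w$ on that level set) to both sides. You should delete your initial claim that $\mu=0$ a.e.\ on $\{w=0\}$, which is false in general; your final accounting already replaces it correctly by bounding the right-hand side by $\int_{\{w\neq 0\}}|u_1-u_2|$, using $\sigma_\varepsilon(0)=0$.
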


\begin{proof}
We prove the assertion for
$\smash{(S,M) = (S_\psi^\phi, M_\psi^\phi)}$.
The other cases are again analogous. 
Let $u_1, u_2\in L^2(L^2)$ be arbitrary but fixed, 
define $y_i := S_\psi^\phi(u_i)$ and 
$\smash{\lambda_i := M_\psi^\phi(u_i)}$
for $i=1,2$,
and denote by 
$\lambda_{i,\psi}$
and
$\smash{\lambda_{i}^\phi}$
the two parts of $\lambda_i$ in \eqref{eq:bilateral_obst_stat_sys}. 
We consider for $\varepsilon > 0$ the mollified signum function
\[
\rho_\varepsilon\colon \R \to \R,
\qquad
\rho_\varepsilon(s) := \max\left (-1, \min\left (1, \frac{s}{\varepsilon} \right ) \right ),
\]
and its antiderivative
\[
R_\varepsilon\colon \R \to \R,
\qquad 
R_\varepsilon(s) 
:=
\begin{cases}
\displaystyle
\frac{1}{2\varepsilon} s^2 &\text{ if } |s| < \varepsilon,
\\[0.15cm]
|s| - \frac{\varepsilon}{2}
&\text{ if } |s| \geq \varepsilon.
\end{cases}
\]
Note that $\rho_\varepsilon$
is globally Lipschitz, bounded, 
and non-decreasing with $\rho_\varepsilon(0) = 0$.
From \cite[Lemma A.1]{ChristofVexler2020},
we further get that 
$\rho_{\varepsilon}(y_1-y_2) \in L^2(H_0^1)$.
If we combine all of this 
with the identities 
\begin{align*}
    \partial_t y_1 - \Delta y_1 = u_1 - \lambda_1
    \text{ a.e.\ in } Q
    \qquad \text{and} \qquad
    \partial_t y_2 - \Delta y_2 = u_2 - \lambda_2
    \text{ a.e.\ in } Q
\end{align*}
obtained from \cref{prop:ex_uniqu}
and again use the formulas for derivatives of
truncations in 
\cite[Lemma A.1]{ChristofVexler2020},
then it follows that 
\begin{equation}
\label{eq:randomeq374b}
\begin{aligned}
&\int_\Omega R_\varepsilon(y_1(T) - y_2(T)) \dd x
+
\int_{\{y_1\neq y_2\}}
(\lambda_1-\lambda_2) \rho_{\varepsilon}(y_1-y_2)
\dd (t,x)
\\
&\leq
\int_0^T \left ( \partial_t (y_1-y_2) + (\lambda_1-\lambda_2),\rho_{\varepsilon}(y_1-y_2)\right )_{L^2(\Omega)}
+ \int_\Omega \rho_\varepsilon'(y_1 - y_2)
|\nabla (y_1 - y_2)|^2 \dd x 
\dd t \\
    &=\int_0^T \left ( \partial_t (y_1-y_2) - \Delta (y_1-y_2)+ (\lambda_1-\lambda_2),\rho_{\varepsilon}(y_1-y_2)\right )_{L^2(\Omega)}\dd t \\
    &=\int_0^T\int_\Omega(u_1-u_2)\rho_{\varepsilon}(y_1-y_2) \dd x \dd t\\
    &\leq \int_{\{y_1\neq y_2\}}
    | u_1-u_2 | \dd (t,x).
\end{aligned}
\end{equation}
Here, we have used 
Hölder's inequality 
in the 
last line and 
$\{y_1\neq y_2\}$
is short for 
$\{(t,x) \in Q \mid y_1(t,x)\neq y_2(t,x) \}$,
defined up to sets of measure zero. 
Letting $0 < \varepsilon \to 0$ in 
\eqref{eq:randomeq374b},
exploiting that $0 \leq R_\varepsilon(s) \leq |s|$ holds for all $s \in \R$,
and invoking the dominated convergence theorem
leads to
\begin{equation}
\label{eq:randomeq2673ge83n}
\begin{aligned}
    \| y_1(T) - y_2(T) \|_{L^1(\Omega)}
+
\int_{\{y_1\neq y_2\}}
&( \lambda_1-\lambda_2) \sgn(y_1-y_2)\dd (t,x)
\\
     &\qquad\leq \|u_1-u_2\|_{L^1(\{y_1\neq y_2\})}.
     \end{aligned}
\end{equation}
From the
complementarity system \cref{eq:bilateral_obst_stat_sys}
and a simple pointwise distinction of cases, 
we further obtain that, a.e.\ on the set $\{y_1\neq y_2\}$, we have
\begin{align*}
    \lambda_1(t,x)-\lambda_2(t,x) 
    =
    \begin{cases}
        \lambda_1^\phi(t,x)-\lambda_{2,\psi}(t,x)
        \geq 0 \ \text{ if } \ y_2(t,x) < y_1(t,x),\\
        \lambda_{1,\psi}(t,x)-\lambda_{2}^\phi(t,x) \leq 0 \ \text{ if } \ y_1(t,x) < y_2(t,x).
    \end{cases}
\end{align*}
Thus, $\sgn(y_1 - y_2) = \sgn(\lambda_1 - \lambda_2)$
a.e.\ on $\{y_1\neq y_2\} \cap \{\lambda_1 \neq \lambda_2\}$, and 
we obtain from \eqref{eq:randomeq2673ge83n} that
\begin{equation}
\label{eq:randomeq2673ge83n-2}
    \| y_1(T) - y_2(T) \|_{L^1(\Omega)}
+
\|\lambda_1-\lambda_2\|_{L^1(\{y_1\neq y_2\})}
     \leq \|u_1-u_2\|_{L^1(\{y_1\neq y_2\})}.
\end{equation}
It remains to study 
$\lambda_1 - \lambda_2$ 
on the set $\{y_1 = y_2\} :=  \{(t,x) \in Q \mid y_1(t,x) = y_2(t,x) \}$ (again defined up to sets of measure zero). 
To this end, we note that the regularity 
$y_1, y_2 \in L^2(H_0^1\cap H^2) \cap H^1(L^2)$
and the formulas 
in \cite[Lemma A.1]{ChristofVexler2020} imply that 
\[
    \partial_t y_1 - \Delta y_1 
    =
    \partial_t y_2 - \Delta y_2 
    \quad
    \text{ a.e.\ on } \{y_1 = y_2\}.
\]
In view of the complementarity system \cref{eq:bilateral_obst_stat_sys},
this entails
\[
    \lambda_1 - \lambda_2
    =
    u_1 - \left (\partial_t y_1 - \Delta y_1 \right )
    -
    u_2 + \left (\partial_t y_2 - \Delta y_2 \right )
    =
    u_1 - u_2\quad
    \text{ a.e.\ on } \{y_1 = y_2\}.
\]
 We thus have $\lambda_1 - \lambda_2 = u_1 - u_2$
 a.e.\ on $\{y_1 = y_2\}$
 and, consequently, 
 \[
 \|\lambda_1-\lambda_2\|_{L^1(\{y_1 = y_2\})}
     = \|u_1-u_2\|_{L^1(\{y_1 = y_2\})}.
 \]
 Combining the above with 
 \eqref{eq:randomeq2673ge83n-2} yields 
 \eqref{eq:M_Lip} for $\smash{(S,M) = (S_\psi^\phi, M_\psi^\phi)}$
 as desired.  
\end{proof}

As a  byproduct of 
\cref{lem:mult_contractive}, we also obtain the 
following result.

\begin{corollary}[{global 1-Lipschitz continuity
with values in $C(L^1)$}]%
\label{cor:LipCL1}%
Suppose that 
$S \in \{S_\psi, S^\phi, S_\psi^\phi\}$.
Then it holds
    \begin{equation}
    \label{eq:LipCL1}
    \begin{aligned}
        &\|S(u_1)-S(u_2)\|_{C(L^1)} 
        \leq \|u_1-u_2\|_{L^1( L^1)}
        \qquad \forall u_1, u_2 \in L^2(L^2).
    \end{aligned}
    \end{equation}
\end{corollary}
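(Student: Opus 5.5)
The idea is to apply \cref{lem:mult_contractive} on subintervals $(0,t)$ rather than only on $(0,T)$. The estimate \eqref{eq:M_Lip} bounds the $L^1(\Omega)$-distance of the terminal values by the $L^1(L^1)$-distance of the data. If it holds with $T$ replaced by an arbitrary $t \in (0,T]$, then dropping the multiplier term and using $\|u_1 - u_2\|_{L^1((0,t)\times\Omega)} \le \|u_1-u_2\|_{L^1(L^1)}$ gives $\|S(u_1)(t) - S(u_2)(t)\|_{L^1(\Omega)} \le \|u_1-u_2\|_{L^1(L^1)}$ for all $t$. Taking the supremum over $t \in [0,T]$ then yields \eqref{eq:LipCL1}. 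At $t=0$ both sides are trivial since $S(u_1)(0)=S(u_2)(0)=y_0$. Both functions lie in $H^1(L^2)\hookrightarrow C(L^2)\hookrightarrow C(L^1)$, so the $C(L^1)$-norm is indeed this supremum.

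To get the estimate on $(0,t)$, I would not restart the theory on a shorter interval; that would require checking that restrictions solve the restricted EVI. Instead I would rerun the computation \eqref{eq:randomeq374b} with integration over $(0,t)$ in place of $(0,T)$. Everything in that computation is pointwise a.e.\ in $Q$ or an integration in time:
\begin{itemize}
\item the PDE identities $\partial_t y_i - \Delta y_i = u_i - \lambda_i$ a.e.\ in $Q$;
\item the truncation formulas of \cite[Lemma A.1]{ChristofVexler2020};
\item the chain rule $\int_0^t (\partial_t w, \rho_\varepsilon(w))_{L^2(\Omega)}\,\dd s = \int_\Omega R_\varepsilon(w(t))\,\dd x$ for $w = y_1-y_2 \in H^1(L^2)$, which uses $w(0)=0$ and $R_\varepsilon(0)=0$;
\item the nonnegativity of the gradient term $\rho_\varepsilon'(w)|\nabla w|^2$.
\end{itemize}
Hence \eqref{eq:randomeq374b} holds verbatim with $T$ replaced by $t$ and $Q$ replaced by $(0,t)\times\Omega$. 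The sign argument $\sgn(y_1-y_2)=\sgn(\lambda_1-\lambda_2)$ on $\{y_1\neq y_2\}\cap\{\lambda_1\neq\lambda_2\}$ is pointwise, so the multiplier term remains nonnegative. After letting $\varepsilon\to 0$ with dominated convergence, this gives $\|y_1(t)-y_2(t)\|_{L^1(\Omega)} \le \|u_1-u_2\|_{L^1((0,t)\times\Omega)}$. For this corollary we do not even need the analysis on $\{y_1=y_2\}$, only the nonnegativity of the multiplier term.

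The only point needing care is the time-localized chain rule for $R_\varepsilon$ applied to $H^1(L^2)$ functions. It follows because $R_\varepsilon$ is $C^1$ with Lipschitz derivative $\rho_\varepsilon$, so $s \mapsto \int_\Omega R_\varepsilon(w(s))\,\dd x$ is absolutely continuous with derivative $(\partial_t w, \rho_\varepsilon(w))_{L^2(\Omega)}$. This is the same ingredient already implicit in the proof of \cref{lem:mult_contractive}, so I expect no real obstacle.
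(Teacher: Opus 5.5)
Your proposal is correct and follows essentially the paper's argument: localize the $L^1$-contraction estimate of \cref{lem:mult_contractive} to $(0,t)$, discard the nonnegative multiplier term, and take the supremum over $t$. The only difference is that the paper applies the lemma to the EVI restricted to $(0,s)$, whereas you rerun the computation \eqref{eq:randomeq374b} on $(0,t)\times\Omega$ directly, which harmlessly avoids having to check that restrictions of solutions solve the restricted EVI.
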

\begin{proof}
By considering
\eqref{eq:bilateral_obst_weak}
on the interval $(0,s)$ instead of $(0,T)$ for 
$0 < s \leq T$ and applying 
\cref{lem:mult_contractive} to 
the resulting restricted EVI, one readily obtains that
\begin{equation*}
\|
S(u_1)(s) - S(u_2)(s)
\|_{L^1(\Omega)}
\leq
\|
u_1 - u_2\|_{L^1(0,s;L^1(\Omega))}
\leq
\|
u_1 - u_2\|_{L^1(0,T;L^1(\Omega))}
\end{equation*}
holds for all $u_1, u_2 \in L^2(0,T;L^2(\Omega))$
and all $s \in (0,T]$.
Taking the supremum over all $s \in (0,T]$
on the left-hand side of this estimate 
yields \eqref{eq:LipCL1} as desired.
\end{proof}

In the situation of \cref{cor:continuity_solutions},
in which the solutions of 
\eqref{eq:bilateral_obst_weak} are in 
$C(\bar Q)$, 
the maps $S_\psi$, $S^\phi$, and $S_\psi^\phi$
are also Lipschitz 
with values in the continuous functions.

\begin{lemma}[Lipschitz continuity with values in $C(\bar Q)$]
\label{lem:Lipschitz}
Suppose, in addition to our standing assumptions, 
that $y_0 \in H_0^1(\Omega) \cap H^2(\Omega)$  
and $\psi, \phi \in W^{1,\infty}(L^2) \cap L^\infty(H^2)$.
Let $S \in \{S_\psi, S^\phi, S_\psi^\phi\}$. 
Then there exists a constant 
$\hat C = \hat C(\Omega, T, d) > 0$ satisfying 
 \begin{equation}
 \label{eq:LipschitzEstimate}
 \|
S(u_1) - S(u_2)
\|_{C(\bar Q)}
\leq 
\hat C
\| 
u_1 - u_2 
\|_{L^\infty(L^2)}
\qquad 
\forall u_1, u_2 \in L^\infty(L^2).
 \end{equation}
\end{lemma}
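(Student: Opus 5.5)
The plan is to reduce \eqref{eq:LipschitzEstimate} to a maximum-principle comparison with solutions of the linear heat equation. Fix $u_1, u_2 \in L^\infty(L^2)$ and let $w \in L^2(H_0^1\cap H^2)\cap H^1(L^2)$ solve
\[
\partial_t w - \Delta w = |u_1 - u_2| \text{ in } Q, \qquad w(0) = 0, \qquad w = 0 \text{ on } (0,T)\times\partial\Omega.
\]
Since $|u_1-u_2| \in L^\infty(L^2)$ and $w(0)=0$, the heat-equation regularity result \cite[Theorem 3.1]{Rehberg2015}, already used in \cref{cor:continuity_solutions}, gives $w \in C(\bar Q)$ with $\|w\|_{C(\bar Q)} \leq \hat C \|u_1-u_2\|_{L^\infty(L^2)}$. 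This uses $d\leq 3$, so that $L^\infty(L^2)$ data yield continuous solutions. Moreover $w \geq 0$ by the weak maximum principle.

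The main claim to prove is the comparison $S(u_1) \leq S(u_2) + w$ a.e.\ in $Q$; by symmetry this also gives $S(u_2) \leq S(u_1) + w$. Both inequalities hold pointwise because $S(u_i) \in C(\bar Q)$ by \cref{cor:continuity_solutions} and $w$ is continuous, and together they yield \eqref{eq:LipschitzEstimate}. To prove the comparison, set $y_i := S(u_i)$ and $\lambda_i := M(u_i)$. Then $z := y_1 - y_2 - w$ satisfies $z(0)=0$ and
\[
\partial_t z - \Delta z = (u_1 - u_2 - |u_1-u_2|) - (\lambda_1 - \lambda_2) \leq -(\lambda_1-\lambda_2).
\]
I would test this with $\max(0,z) \in L^2(H_0^1)$, which is admissible by \cite[Lemma A.1]{ChristofVexler2020}, to obtain
\[
\tfrac12\|\max(0,z(T))\|_{L^2}^2 + \|\nabla \max(0,z)\|^2 \leq -\int_{\{z>0\}} (\lambda_1-\lambda_2)\,\max(0,z).
\]

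The key step, and the main obstacle, is to show that $\lambda_1 - \lambda_2 \geq 0$ a.e.\ on $\{z>0\}$. There $y_1 > y_2 + w \geq y_2$, so this is exactly the pointwise case distinction from the proof of \cref{lem:mult_contractive}. If $y_1 > y_2$, complementarity in \eqref{eq:bilateral_obst_stat_sys} forces $\lambda_{1,\psi} = 0$, because $y_1 > y_2 \geq \psi$, and $\lambda_2^\phi = 0$, because $y_2 < y_1 \leq \phi$. Hence $\lambda_1 - \lambda_2 = \lambda_1^\phi - \lambda_{2,\psi} \geq 0$. The unilateral cases are analogous, with one of the two multiplier components absent.

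With this, the right-hand side above is nonpositive, and Poincaré's inequality gives $\max(0,z)=0$, i.e., $y_1 \leq y_2 + w$. I would double-check the sign conventions for $\lambda_\psi \leq 0$ and $\lambda^\phi \geq 0$, but the argument mirrors the sign identity $\sgn(y_1-y_2)=\sgn(\lambda_1-\lambda_2)$ already used in the paper.
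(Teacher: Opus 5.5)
Your proof is correct, but it takes a different route from the paper.

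\textbf{The paper's route.} It never touches the multipliers in this lemma. It tests the EVIs for $y_1$ and $y_2$ with $y_1 - \mathds{1}_{[0,s]}(y_1-y_2)_l$ and $y_2 + \mathds{1}_{[0,s]}(y_1-y_2)_l$, where $(y_1-y_2)_l := \min(0,y_1-y_2+l)+\max(0,y_1-y_2-l)$ are Stampacchia truncations; their admissibility in $\KK$ follows from a case distinction. This gives, for every level $l \geq 0$, the energy bound
$\|(y_1-y_2)_l\|_{L^\infty(L^2)}^2 + \|\nabla (y_1-y_2)_l\|_{L^2([L^2]^d)}^2 \leq 3\int_Q |u_1-u_2|\,|(y_1-y_2)_l| \,\dd(t,x)$.
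The paper then concludes with the Stampacchia-type lemma \cite[Lemma A.2]{ChristofVexler2020}.

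\textbf{Your route.} You use the complementarity system and the sign identity from \cref{lem:mult_contractive} to prove the pointwise comparison $|S(u_1)-S(u_2)| \leq w$ a.e.\ in $Q$. Here $w$ is the heat solution with data $|u_1-u_2|$. You then hand the $L^\infty$-bound over to the linear heat equation. If \cite[Theorem 3.1]{Rehberg2015} only gives qualitative continuity, the quantitative bound follows from linearity and the closed graph theorem.

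\textbf{What each buys.} Your argument is shorter. It also makes the independence of $\hat C$ from the obstacles completely transparent, which is what \cref{lem_deg_obstacle_probs} later needs. It yields a stronger, localized domination result as well; for instance, it gives \eqref{eq:LipCL1} via the $L^1$-contractivity of the Dirichlet heat semigroup. The paper's argument, in turn, works purely at the level of the variational inequality. It therefore does not rely on $L^2$-multipliers existing or splitting into $\lambda_\psi$ and $\lambda^\phi$. It also obtains the $L^\infty$-estimate from energy estimates of truncations alone, rather than from an external regularity result for the heat equation.
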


\begin{proof}
We prove \eqref{eq:LipschitzEstimate} only for 
\smash{$S = S_\psi^\phi$}. 
The unilateral cases are completely analogous. 
Let $u_1, u_2 \in L^\infty(L^2)$
be given and define $y_1 := \smash{S_\psi^\phi}(u_1)$ and $y_2 := \smash{S_\psi^\phi}(u_2)$. 
We consider for $l \geq 0$ the family of truncations 
\[
(y_1 - y_2)_l := 
\min(0, y_1 - y_2 + l) + \max(0, y_1 - y_2 - l),
\]
where $\max(0, \cdot)$ and $\min(0, \cdot)$
again act by superposition. 
Due to Stampacchia's lemma \cite[Theorem 5.8.2]{Attouch2006},
we have $(y_1 - y_2)_l \in L^2(H_0^1)$ for all $l \geq 0$.
A simple distinction of cases further shows 
$y_1 - (y_1 - y_2)_l,
y_2 + (y_1 - y_2)_l \in \KK_\psi^\phi$ for all $l\geq 0$. 
This allows us to choose, 
for every $s \in (0, T]$,
$y_1 - \mathds{1}_{[0,s]}(y_1 - y_2)_l$ as the test function 
in the EVI for $y_1$ and 
$y_2 + \mathds{1}_{[0,s]}(y_1 - y_2)_l$ as the test function 
in the EVI for $y_2$ 
and to add the resulting inequalities to obtain 
\begin{align}
\label{eq:randome2273h383h}
\int_0^s \left \langle \partial_t(y_2 -y_1)  
- \Delta (y_2 -y_1) + u_1 - u_2, 
(y_1 - y_2)_l\right  \rangle_{H_0^1(\Omega)}\mathrm{d}t \geq 0.
\end{align}
By exploiting the formulas in \cite[Lemma A.1]{ChristofVexler2020},
\eqref{eq:randome2273h383h} can be rewritten as 
\begin{align*}
\frac{1}{2} \| (y_1 -y_2)_l(s)\|_{L^2(\Omega)}^2
+
\int_0^s 
\| \nabla (y_1 - y_2)_l \|_{L^2(\Omega)^d}^2
\mathrm{d}t
\leq
\int_0^s \left (u_1 - u_2, 
(y_1 - y_2)_l\right )_{L^2(\Omega)}\mathrm{d}t
\end{align*}
for all $s \in (0, T]$ and $l \geq 0$.
The above implies that 
\begin{align*}
\| (y_1 -y_2)_l\|_{L^\infty(L^2)}^2
+
\| \nabla (y_1 -y_2)_l\|_{L^2([L^2]^d)}^2
\leq
3
\int_0^T
\int_\Omega  |u_1 - u_2 |
\,
 | (y_1 - y_2)_l| \dd x \mathrm{d}t
\end{align*}
holds for all $l\geq 0$ and, 
in view of Poincaré's inequality and 
\cite[Lemma A.2]{ChristofVexler2020},
that there exists a constant $\hat C = \hat C(\Omega, T, d) > 0$ satisfying 
$
\| y_1 - y_2 \|_{L^\infty(Q)}
 \leq
 \hat C 
\| u_1 - u_2\|_{L^\infty(L^2)}
$.
This proves \eqref{eq:LipschitzEstimate} 
for \smash{$S = S_\psi^\phi$} as desired.
\end{proof}

For our analysis of \eqref{eq:bilateral_obst_intro}, 
we also have to study 
situations
in which the obstacles 
take the values $\pm \infty$. 
This case is addressed 
in the following lemma.

\begin{lemma}[EVIs with singular obstacles]%
\label{lem_deg_obstacle_probs}%
Suppose, in addition to our standing assumptions, 
that $y_0 \in H_0^1(\Omega) \cap H^2(\Omega)$  
and $\psi, \phi \in W^{1,\infty}(L^2) \cap L^\infty(H^2)$.
Assume further 
that $V_\psi$ and  $V^\phi$ are 
bounded open subsets of 
$\R^{d+1}$ (possibly empty) and that 
$\chi_\psi\colon \R^{d+1}\to [-\infty, 0]$ and
$\chi^\phi\colon \R^{d+1} \to [0, \infty]$ are maps
with the following properties:
\begin{enumerate}[label=\alph*)]
\item 
$\chi_\psi$ is real-valued on $V_\psi$
with 
$\chi_\psi|_{V_\psi} \in C^2(V_\psi)$,
it holds $\chi_\psi \equiv - \infty$ on $\R^{d+1} \setminus V_\psi$, and $\chi_\psi\colon \R^{d+1} \to [-\infty, 0]$
is upper semicontinuous;
\item 
$\chi^\phi$ is real-valued on $V^\phi$ with
$\chi^\phi|_{V^\phi} \in C^2(V^\phi)$,
it holds $\chi^\phi \equiv  \infty$ on $\R^{d+1} \setminus V^\phi$,
and $\chi^\phi\colon \R^{d+1} \to [0, \infty]$
is lower semicontinuous.
\end{enumerate}
Define 
\begin{align}
\label{eq:hat_obstacles}
\hat \psi(t,x) := \psi(t,x) + \chi_\psi(t,x)
\qquad\text{and}\qquad
\hat \phi(t,x) := \phi(t,x) + \chi^\phi(t,x)
\end{align}
for a.a.\ $(t,x) \in Q$
and
 $\KK_{\hat \psi}^{\hat \phi} 
:=
 \{v \in L^2(H_0^1) \mid
\hat \psi \leq v \leq \hat \phi \text{ a.e.\ in } Q
 \}.$
Then:
\begin{enumerate}[label=\roman*)]
\item\label{lem_deg_obstacle_probs:i}
For every $u \in L^\infty(L^2)$,
there exists a unique solution $y$ of the problem
\begin{equation}
\label{eq:bilateral_obst_weak_deg}
\left \{
~
\begin{aligned}
&\text{Find } y \in \KK_{\hat \psi}^{\hat \phi}  \cap H^1(H^{-1})
\text{ such that } y(0) = y_0 \text{ and }
\\
&\int_0^T \left \langle \partial_t y  - \Delta y - u, v - y\right  \rangle_{H_0^1(\Omega)}\mathrm{d}t \geq 0 
~\quad\forall v \in \KK_{\hat \psi}^{\hat \phi}. 
\end{aligned}
\right.
\end{equation}
\item\label{lem_deg_obstacle_probs:ii}
A function $y$ solves 
\eqref{eq:bilateral_obst_weak_deg} 
for some $u \in L^\infty(L^2)$
if and only if there exist multipliers 
$\lambda_{\hat \psi}$ and $\lambda^{\hat \phi}$
such that the following complementarity system holds:%
\begin{equation}
\label{eq:bilateral_obst_stat_sys_deg}
\left \{
~
\begin{gathered}
y \in L^2(H_0^1\cap H^2) \cap H^1(L^2),
\quad 
\lambda_{\hat \psi}, \lambda^{\hat \phi} \in L^2(L^2),
\quad y(0) = y_0, 
\\
\partial_t y  - \Delta y  = 
u - \lambda_{\hat \psi} - \lambda^{\hat \phi}\quad \text{ a.e.\ in }Q,
\\
\lambda_{\hat \psi} \leq 0,
\quad
y - \hat \psi \geq 0,
\quad
\lambda_{\hat \psi}(y - \hat \psi) = 0
\quad \text{ a.e.\ in }V_\psi \cap Q,
\\
\,\lambda^{\hat \phi} \geq 0,
\quad
y - \hat \phi \leq 0,
\quad
\lambda^{\hat \phi}(y - \hat \phi) = 0
\quad\text{ a.e.\ in }V^\phi \cap Q,
\\
\lambda_{\hat \psi} = 0~\text{ a.e.\ in }Q \setminus V_\psi,
\qquad
\lambda^{\hat \phi} = 0~\text{ a.e.\ in }Q \setminus V^\phi.
\end{gathered}
\right.
\end{equation}
\item\label{lem_deg_obstacle_probs:iii}
If
$y$ solves 
\eqref{eq:bilateral_obst_weak_deg}
for $u \in L^\infty(L^2)$, then 
$y \in C(\bar Q)$ and $u - \partial_t y + \Delta y 
\in L^\infty(L^2)$.
\item\label{lem_deg_obstacle_probs:iv}
If we define 
\[
S_{\hat \psi}^{\hat \phi}\colon L^\infty(L^2) \to L^2(H_0^1\cap H^2) \cap H^1(L^2) \cap C(\bar Q), 
\qquad u \mapsto y,
\]
to be the solution operator
of \eqref{eq:bilateral_obst_weak_deg}
and denote by 
\[
M_{\hat \psi}^{\hat \phi}\colon L^\infty(L^2) \to L^\infty(L^2),
\qquad 
M_{\hat \psi}^{\hat \phi}(u) := 
 u - \partial_t S_{\hat \psi}^{\hat \phi}(u) + 
 \Delta S_{\hat \psi}^{\hat \phi}(u),
 \]
 the associated multiplier map,
 then these functions 
satisfy 
 the same Lipschitz estimates 
 as in the regular case, i.e., 
 \eqref{eq:LipEnergy}, 
 \eqref{eq:M_Lip}, 
 \eqref{eq:LipCL1},
 and 
 \eqref{eq:LipschitzEstimate}.
\end{enumerate}
\end{lemma}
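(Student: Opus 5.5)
Our plan is to reduce everything to the regular case treated in \cref{prop:ex_uniqu}, \cref{lem:par_LewyStamp}, \cref{cor:continuity_solutions}, and \cref{lem:mult_contractive,lem:Lipschitz,lem:LipschitzPrimitiv,cor:LipCL1}. The key point is that for fixed $u \in L^\infty(L^2)$ the solution is continuous on $\bar Q$ and uniformly bounded, so the singular obstacles are only active where they are finite and bounded. On a neighborhood of the active set they can therefore be replaced by smooth real-valued obstacles.

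\emph{Step 1: a priori bound and regular replacement obstacles.} Since $\chi_\psi \le 0 \le \chi^\phi$, every $v \in \KK_\psi^\phi$ lies in $\KK_{\hat\psi}^{\hat\phi}$, so the latter set is nonempty. Fix $K>0$ larger than $\|\psi\|_{L^\infty(Q)}$, $\|\phi\|_{L^\infty(Q)}$, which are finite because $L^\infty(H^2)\hookrightarrow L^\infty(Q)$ for $d\le 3$, and larger than an a priori sup-bound on any solution. Such a bound follows from a Lewy--Stampacchia/maximum-principle comparison with the unconstrained heat solutions for $u$ and for the data $\partial_t\psi-\Delta\psi$, $\partial_t\phi-\Delta\phi$, together with the $C(\bar Q)$-estimate for the heat equation used in \cref{cor:continuity_solutions}. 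By upper semicontinuity of $\chi_\psi$, the set $\{\chi_\psi > -2K\}$ is open with closure contained in $V_\psi$; a cutoff argument then gives a $C^2$ function $\tilde\chi_\psi \le 0$ on $\R^{d+1}$ with $\tilde\chi_\psi = \chi_\psi$ on $\{\chi_\psi \ge -2K\}$ and $\tilde\chi_\psi \le -2K$ elsewhere. Define $\tilde\chi^\phi$ analogously and set $\tilde\psi := \psi+\tilde\chi_\psi$, $\tilde\phi := \phi+\tilde\chi^\phi$. These lie in $W^{1,\infty}(L^2)\cap L^\infty(H^2)$ because $\tilde\chi$ is $C^2$ on the bounded set $\bar Q$. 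The compatibility $\tilde\psi(0)\le y_0\le\tilde\phi(0)$ follows from $\tilde\chi_\psi\le 0\le\tilde\chi^\phi$.

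\emph{Step 2: equivalence of the problems.} We claim that $y$ solves \eqref{eq:bilateral_obst_weak_deg} if and only if it solves \eqref{eq:bilateral_obst_weak} with $\KK = \KK_{\tilde\psi}^{\tilde\phi}$. Let $y$ solve the regular problem. By \cref{cor:continuity_solutions} we have $y\in C(\bar Q)$ with $|y|<K$. Where $\tilde\chi_\psi\neq\chi_\psi$, we get $\tilde\psi\le -K<y$, so the lower constraint is inactive and $\lambda_{\tilde\psi}=0$ there; the same holds at the upper obstacle. Hence $y\in\KK_{\hat\psi}^{\hat\phi}$, and the complementarity system of \cref{prop:ex_uniqu}iii) for $\tilde\psi,\tilde\phi$ becomes exactly \eqref{eq:bilateral_obst_stat_sys_deg}. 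The converse implication, that \eqref{eq:bilateral_obst_stat_sys_deg} implies the EVI, is the same test-function computation as at the end of the proof of \cref{prop:ex_uniqu}, carried out only on $V_\psi\cap Q$ and $V^\phi\cap Q$. Uniqueness for \eqref{eq:bilateral_obst_weak_deg} follows verbatim from \eqref{eq:uniqueness}. Together these give i)--iii), with iii) coming from \cref{cor:continuity_solutions}.

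\emph{Step 3: Lipschitz estimates, and the main obstacle.} The difficulty in iv) is that $K$, and hence the replacement obstacles, depend on $u$, while the estimates must hold for all pairs $u_1,u_2$. We handle this by choosing one $K$ exceeding the sup-norms of both solutions. Then $S_{\hat\psi}^{\hat\phi}(u_i)=S_{\tilde\psi}^{\tilde\phi}(u_i)$ and $M_{\hat\psi}^{\hat\phi}(u_i)=M_{\tilde\psi}^{\tilde\phi}(u_i)$ for $i=1,2$, so \eqref{eq:LipEnergy}, \eqref{eq:M_Lip}, \eqref{eq:LipCL1}, and \eqref{eq:LipschitzEstimate} transfer directly. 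Their constants depend only on $\Omega$, $T$, $d$ and not on the obstacles. For \eqref{eq:LipEnergy}, \eqref{eq:M_Lip}, and \eqref{eq:LipCL1}, which are stated for $L^2(L^2)$ data, we restrict to $u\in L^\infty(L^2)$. Alternatively, one can reprove them directly from the complementarity system \eqref{eq:bilateral_obst_stat_sys_deg}, since the sign argument in \cref{lem:mult_contractive} is pointwise and still works.

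The step we expect to be most delicate is the a priori sup-bound in Step 1 together with the $C^2$ cutoff construction near $\partial V_\psi$. If the comparison bound is awkward to obtain, our first fallback is to avoid it: take $K$ from the $C(\bar Q)$-norm of the regular solution, show $|y|<K$ by an $L^\infty$ estimate in the style of \cref{lem:Lipschitz} comparing $y$ with $S_\psi^\phi(u)$, and conclude the consistency by uniqueness.
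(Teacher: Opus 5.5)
Your proposal is correct and has the same architecture as the paper's proof. You cut off $\chi_\psi,\chi^\phi$ outside the compact sets $\{\chi_\psi\ge -c\}\subset V_\psi$ and $\{\chi^\phi\le c\}\subset V^\phi$. You use an a priori $C(\bar Q)$ bound to see that the modified parts of the obstacles are inactive, identify the two complementarity systems, and transfer existence, regularity, and the obstacle-independent Lipschitz estimates; your pairwise choice of $K$ in Step 3 plays the role of the paper's ball of radius $r$.

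The genuine difference is how you obtain the a priori bound. The point you should make explicit is that this bound must hold uniformly over all modified obstacles, since $\tilde\psi,\tilde\phi$ are built from $K$. Applying \cref{lem:par_LewyStamp} to $\tilde\psi,\tilde\phi$ would therefore be circular: $\partial_t\tilde\psi-\Delta\tilde\psi$ involves second derivatives of $\tilde\chi_\psi$, which are in general not bounded uniformly in $K$.

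Your comparison route does work, but it needs an argument that is not in the paper. For example, test with $(y-S_\psi(u))^+$. On $\{y>S_\psi(u)\}$ you have $y>\psi\ge\tilde\psi$, so $\lambda_{\tilde\psi}=0$ there, and $\partial_t S_\psi(u)-\Delta S_\psi(u)\ge u$. This yields $S^\phi(u)\le y\le S_\psi(u)$ for every modification.

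The paper avoids any new comparison lemma. It notes that $w=S_\psi^\phi(0)$ also solves the modified problem with data $\partial_t w-\Delta w$. It then applies \cref{lem:Lipschitz}, whose constant does not depend on the obstacles, to get $\psi-C_r\le y\le\phi+C_r$. That is your fallback, provided you regard $S_\psi^\phi(u)$ as the solution of the \emph{modified} problem with data $u-M_\psi^\phi(u)$. A truncation estimate taken directly between problems with different obstacles does not go through, since $y_2+(y_1-y_2)_l$ need not lie in $\KK_\psi^\phi$.

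A minor correction: $\{\chi_\psi>-2K\}$ is open because $\chi_\psi$ is continuous on $V_\psi$. Upper semicontinuity is what makes $\{\chi_\psi\ge-2K\}$ closed, hence compact in $V_\psi$.
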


\begin{proof}
That the EVI \eqref{eq:bilateral_obst_weak_deg}
can have at most one solution 
follows from the same arguments
as in the proof of \cref{prop:ex_uniqu}. 

To prove that \eqref{eq:bilateral_obst_weak_deg} 
possesses a solution, we suppose that $r>0$ 
is a given and fixed number and consider the function  
$w := S_\psi^\phi(0) \in 
C(\bar Q) \cap L^2(H_0^1 \cap H^2) \cap H^1(L^2)$.
From
\cref{cor:continuity_solutions},
we get that 
$\partial_t w - \Delta w \in L^\infty(L^2)$.
This allows us to define, 
using the 
constant 
$\hat C = \hat C(\Omega, T, d)$ from \cref{lem:Lipschitz}, 
\[
C_r := 
\hat C \left (\|\partial_t w - \Delta w\|_{L^\infty(L^2)}
+
  r \right ) \in (0, \infty).
\]
Recall that 
upper semicontinuity implies that 
preimages of intervals $[c, \infty]$, $c \in \R$,
are closed. For $\chi_\psi$,
this entails that
$E_\psi := (\chi_\psi)^{-1}([- C_r - 1, \infty]) \subset \R^{d+1}$
is a compact subset of $V_\psi$. 
Analogously, we obtain that 
$E^\phi := (\chi^\phi)^{-1}([-\infty, C_r + 1]) \subset \R^{d+1}$
is a compact subset of $V^\phi$. 
Consider now functions 
$\rho_\psi, \rho^\phi$ satisfying 
\begin{gather*}
\rho_\psi \in C_c^\infty(\R^{d+1}),
\qquad 0 \leq \rho_\psi \leq 1
\text{ in } \R^{d+1},
\qquad
\supp(\rho_\psi) \subset V_\psi,
\qquad
 \rho_\psi \equiv 1 \text{ on }
 E_\psi,
\\
 \rho^\phi\in C_c^\infty(\R^{d+1}),
\qquad 0 \leq \rho^\phi \leq 1
\text{ in } \R^{d+1},
\qquad
\supp(\rho^\phi) \subset V^\phi,
\qquad 
 \rho^\phi \equiv 1 \text{ on }
E^\phi,
\end{gather*}
and define\\[-0.7cm]
\begin{align*}
\bar \psi
&:=
\begin{cases}
\psi + \rho_\psi\chi_\psi
+ (1 - \rho_\psi)(-C_r - 1)  & \text{a.e.\ in } V_\psi \cap Q,
\\
\psi -C_r - 1 &\text{a.e.\ in }Q \setminus V_\psi,
\end{cases}
\\
\bar \phi
&:=
\begin{cases}
\phi + \rho^\phi \chi^\phi + (1 - \rho^\phi)(C_r + 1)  &~~~\text{a.e.\ in } V^\phi \cap Q,
\\
\phi + C_r + 1 &~~~\text{a.e.\ in }Q \setminus V^\phi.
\end{cases}
\end{align*}
Then it follows from our assumptions 
on 
$\psi$, $\phi$, $\chi_\psi$, $\chi^\phi$,
$\rho_\psi$, and $\rho^\phi$ 
and the definitions of $E_\psi$ and $E^\phi$ 
that 
$\bar \psi, \bar \phi \in 
W^{1,\infty}(L^2) \cap L^\infty(H^2)$ and 
\begin{equation}
\label{eq:tilde_obs_props}
\begin{gathered}
\bar \psi
\begin{cases}
= \psi + \chi_\psi = \hat \psi \leq \psi& 
\hspace{-0.15cm}\text{a.e.\ in } 
E_\psi \cap Q,
\\
\leq \psi - C_r - 1
&\hspace{-0.15cm}\text{a.e.\ in }Q \setminus E_\psi,
\end{cases}
\qquad 
\bar \phi
\begin{cases}
= \phi + \chi^\phi = \hat \phi \geq \phi& 
\hspace{-0.15cm}\text{a.e.\ in } 
E^\phi \cap Q,
\\
\geq \phi + C_r + 1
&\hspace{-0.15cm}\text{a.e.\ in }Q \setminus E^\phi.
\end{cases}
\end{gathered}
\end{equation}
Assume now that 
some $u \in L^\infty(L^2)$
satisfying $\|u\|_{L^\infty(L^2)} \leq r$
is given. Then it follows 
from \cref{prop:ex_uniqu}
and \cref{cor:continuity_solutions}
that the bilateral parabolic 
obstacle problem with 
lower obstacle 
$\bar \psi$ and 
upper obstacle $\bar \phi$
and parameter $u$ has 
a unique solution 
\mbox{$y \in L^2(H^2)\cap H^1(L^2) \cap C(\bar Q)$}.
Due to $w \in \KK_\psi^\phi$ and the 
inequalities 
\smash{$\bar \psi \leq \psi \leq \phi \leq \bar \phi$}
obtained from \eqref{eq:tilde_obs_props},
we further get that the 
same obstacle problem with parameter 
$\partial_t w - \Delta w$ is solved by $w$. In view of 
\eqref{eq:LipschitzEstimate} and the triangle inequality,
this yields
$
\|y - w\|_{C(\bar Q)}
\leq
\hat C \left(\|\partial_t w - \Delta w\|_{L^\infty(L^2)} + r\right ) = C_r
$,
which, due to
$\psi \leq w \leq \phi$, entails 
\begin{equation}
\label{eq:randomeq82hei3n3}
\psi - C_r \leq y \leq \phi + C_r \text{ a.e.\ in } Q.
\end{equation}
Because of \eqref{eq:tilde_obs_props}, 
\eqref{eq:randomeq82hei3n3}
implies $y \geq \bar \psi +1$  
a.e.\ in $Q \setminus E_\psi$ 
and 
$y \leq \bar \phi -1$ 
a.e.\ in $Q \setminus E^\phi$.
From \eqref{eq:randomeq82hei3n3}
and
the definitions of 
$\hat \psi $,
$\hat \phi $,
$E_\psi$,
and $E^\phi$,
we also 
obtain that 
$y \geq \hat \psi + 1$
a.e.\ in $Q \setminus E_\psi$ 
and 
that $y \leq \hat \phi - 1$
a.e.\ in $Q \setminus E^\phi$. 
In combination 
with the 
identities 
$\bar \psi = \hat \psi$
a.e.\ on $E_\psi \cap Q$
and 
$\bar \phi = \hat \phi$
a.e.\ on $E^\phi \cap Q$
in \eqref{eq:tilde_obs_props}, 
all of this implies that the 
complementarity system 
\begin{equation*}
\begin{gathered}
y \in L^2(H_0^1\cap H^2) \cap H^1(L^2),
\quad 
\lambda_{\bar \psi}, \lambda^{\bar \phi} \in L^2(L^2),
\quad y(0) = y_0, 
\\
\partial_t y  - \Delta y  = 
u - \lambda_{\bar \psi} - \lambda^{\bar \phi}\quad \text{ a.e.\ in }Q,
\\
\lambda_{\bar \psi} \leq 0,
\qquad  
y - \bar \psi \geq 0,
\qquad
\lambda_{\bar \psi}(y - \bar \psi) = 0\quad \text{ a.e.\ in }Q,
\\
\lambda^{\bar \phi} \geq 0,
\qquad  
y - \bar \phi \leq 0,
\qquad
\lambda^{\bar \phi}(y - \bar \phi) = 0\quad \text{ a.e.\ in }Q,
\end{gathered}
\end{equation*}
which $y$ satisfies 
as a solution of the bilateral parabolic 
obstacle problem with obstacles
$\bar \psi$ and 
$\bar \phi$, 
reduces precisely to \eqref{eq:bilateral_obst_stat_sys_deg}
(with $\lambda_{\hat \psi} = \lambda_{\bar \psi}$
and $\lambda^{\hat \phi} = \lambda^{\bar \phi}$).
This shows that \eqref{eq:bilateral_obst_stat_sys_deg}
possesses a solution 
\smash{$(y, \lambda_{\hat \psi}, \lambda^{\hat \phi})$}.
Using the exact same arguments 
as in the proof of 
\cref{prop:ex_uniqu}, 
one now checks that, 
if $(y, \lambda_{\hat \psi}, \lambda^{\hat \phi})$
solves \eqref{eq:bilateral_obst_stat_sys_deg},
then $y$ solves \eqref{eq:bilateral_obst_weak_deg}. 
In combination with the 
already established fact that \eqref{eq:bilateral_obst_weak_deg}
can have at most one solution, 
this proves the assertions
in points \ref{lem_deg_obstacle_probs:i}
and \ref{lem_deg_obstacle_probs:ii}
of the lemma. 

To 
obtain \ref{lem_deg_obstacle_probs:iii}
and \ref{lem_deg_obstacle_probs:iv},
we note that, in all of the above,
$u \in L^\infty(L^2)$
was an arbitrary function 
with $\|u\|_{L^\infty(L^2)} \leq r$.
This means that we have shown that 
\begin{equation}
\label{eq:transfer}
    S_{\hat \psi}^{\hat \phi}(u)
    =
    S_{\bar \psi}^{\bar \phi}(u)
    \qquad 
    \forall u \in B_r^{L^\infty(L^2)}(0)
\end{equation}
holds, where $S_{\hat \psi}^{\hat \phi}$
is the solution map of \eqref{eq:bilateral_obst_weak_deg} 
and 
$ S_{\bar\psi}^{\bar \phi}$
the solution map 
of the bilateral obstacle problem 
with the obstacles
in \eqref{eq:tilde_obs_props}. 
That the solutions 
of \eqref{eq:bilateral_obst_weak_deg} 
satisfy 
$y \in C(\bar Q)$ 
and
$u - \partial_t y + \Delta y 
\in L^\infty(L^2)$
for all $u \in L^\infty(L^2)$
with $\|u\|_{L^\infty(L^2)} \leq r$
and that the solution and multiplier map 
of  \eqref{eq:bilateral_obst_weak_deg} 
satisfy the 
Lipschitz estimates 
\eqref{eq:LipEnergy}, 
 \eqref{eq:M_Lip}, 
 \eqref{eq:LipCL1},
  and 
 \eqref{eq:LipschitzEstimate}
for all $u_1, u_2 \in L^\infty(L^2)$
with 
$\|u_i\|_{L^\infty(L^2)} \leq r$, $i=1,2$,
follows immediately from \eqref{eq:transfer}. 
As $r > 0$ was arbitrary, 
we obtain the assertions 
in \ref{lem_deg_obstacle_probs:iii}
and \ref{lem_deg_obstacle_probs:iv} 
also for all $u, u_1, u_2 \in L^\infty(L^2)$.
(Note that the Lipschitz constants in 
\cref{lem:LipschitzPrimitiv,lem:mult_contractive,lem:Lipschitz,cor:LipCL1} 
are independent of the obstacles so that
it does not matter here that 
$\bar \psi$ and $\bar \phi$ depend on $r$.)
This completes the proof.
\end{proof}

We emphasize that the 
choices $V^\phi = \emptyset$ and 
$V_\psi = \emptyset$ are allowed in 
\cref{lem_deg_obstacle_probs}. For these 
sets, \eqref{eq:bilateral_obst_weak_deg} 
reduces to a unilateral obstacle problem 
(or the heat equation, respectively, if 
$V^\phi = V_\psi = \emptyset$).
If one of the sets $V^\phi$, $V_\psi$
is empty, then we can proceed along the lines 
of \cite{Christof2019} to 
obtain directional differentiability results for the 
solution operator and the multiplier map 
of \eqref{eq:bilateral_obst_weak_deg}.

\begin{lemma}[directional derivatives for unilateral 
parabolic obstacle problems]%
\label{lem:dir_diff_uni}%
Consider the situation 
in \cref{lem_deg_obstacle_probs}
in the special case 
$V^\phi = \emptyset$ and 
$\chi^\phi \equiv \infty$.
Denote the solution map of the
(in this case unilateral) parabolic
obstacle problem \eqref{eq:bilateral_obst_weak_deg} by 
\[
    S_{\hat \psi} \colon L^\infty(L^2) \to L^2(H_0^1\cap H^2) \cap H^1(L^2) \cap C(\bar Q),
    \qquad 
    u \mapsto y, 
\]
and the associated multiplier map by 
\[
M_{\hat \psi}\colon L^\infty(L^2) \to L^\infty(L^2),
\qquad 
M_{\hat \psi}(u) := 
 u - \partial_t S_{\hat \psi}(u) + 
 \Delta S_{\hat \psi}(u).
 \]
 Then 
 $S_{\hat \psi}$ and 
 $M_{\hat \psi}$
 can be extended uniquely to
 globally 1-Lipschitz continuous maps
\[
    S_{\hat \psi}\colon L^1(L^1) \to C(L^1)
    \qquad
    \text{and}
    \qquad
    M_{\hat \psi} \colon L^1(L^1) \to L^1(L^1).
\] 
The obtained extensions 
satisfy
\begin{equation}
\begin{aligned}
\label{eq:M_Lip_1}
 \|
M_{\hat \psi}(u_1) - M_{\hat \psi}(u_2)
\|_{L^1(L^1)}
&+
 \|
S_{\hat \psi}(u_1)(T) - S_{\hat \psi}(u_2)(T)
 \|_{L^1(\Omega)}
\\
&\leq
 \|
u_1 - u_2  \|_{L^1(L^1)}
\qquad \forall u_1, u_2 \in L^1(L^1)
\end{aligned}
\end{equation}
and are Hadamard directionally differentiable 
in the sense that,
for all $u, g \in L^1(L^1)$
and $s \in [0,T]$, 
there exist
\smash{$M_{\hat \psi}'(u;g) \in  \MM((0,T] \times \Omega)$}
and 
\smash{$S_{\hat \psi}'(u;g)(s) \in  L^1(\Omega)$}
with%
\begin{equation}
\label{eq:had_probs_unilatera}
\begin{aligned}
    &\{g_n\} \subset L^1(L^1), 
    \{\tau_n\} \subset \left (0, \infty \right ),
    g_n \to g \text{ in } L^1(L^1), 
    \tau_n \to 0
    \\
    &
    \qquad
    \Rightarrow
    ~
    \left \{
    \begin{aligned}
    &\frac{M_{\hat \psi}(u + \tau_n g_n) - M_{\hat \psi}(u)}{\tau_n}
    \weaklystar M_{\hat \psi}'(u;g) &&\text{in } \MM((0,T] \times \Omega),
    \\
    &\frac{S_{\hat \psi}(u + \tau_n g_n)(s) - S_{\hat \psi}(u)(s)}{\tau_n}
    \to S_{\hat \psi}'(u;g)(s) &&\text{in }L^1(\Omega). 
    \end{aligned}
    \right.
\end{aligned}
\end{equation}
\end{lemma}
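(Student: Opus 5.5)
The proof splits into two parts. First comes the extension to $L^1(L^1)$, which is a density argument. Then comes the directional differentiability, which I will derive from pointwise-a.e.\ convexity of the unilateral solution map, as in \cite{Christof2019}.

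\textbf{Extension.} By \cref{lem_deg_obstacle_probs}\ref{lem_deg_obstacle_probs:iv}, the maps $S_{\hat \psi}$ and $M_{\hat \psi}$ satisfy \eqref{eq:M_Lip} and \eqref{eq:LipCL1} on $L^\infty(L^2)$. This space is dense in $L^1(L^1)$. The maps are therefore uniformly continuous, and they extend uniquely to 1-Lipschitz maps $L^1(L^1)\to C(L^1)$ and $L^1(L^1)\to L^1(L^1)$. The estimate \eqref{eq:M_Lip_1} passes to the limit, since $S\mapsto S(T)$ is continuous from $C(L^1)$ to $L^1(\Omega)$.

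\textbf{Convexity and monotonicity.} For $u\in L^\infty(L^2)$ I would establish the following properties of the unilateral (lower-obstacle) map:
\begin{enumerate}[label=(\alph*)]
\item it is order preserving, i.e., $u_1\le u_2$ implies $S_{\hat\psi}(u_1)\le S_{\hat\psi}(u_2)$;
\item it is pointwise a.e.\ concave, i.e., $S_{\hat\psi}(\theta u_1+(1-\theta)u_2)\ge \theta S_{\hat\psi}(u_1)+(1-\theta)S_{\hat\psi}(u_2)$.
\end{enumerate}
For (a) I would test the two EVIs with $y_1-\max(0,y_1-y_2)$ and $y_2+\max(0,y_1-y_2)$. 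For (b) I would use the comparison principle: $w:=\theta y_1+(1-\theta)y_2$ is admissible, and it satisfies $\partial_t w-\Delta w\le \theta u_1+(1-\theta)u_2$ because $-\lambda_\psi\ge 0$. Hence $w$ is a subsolution, and a subsolution lies below the solution. This comparison step can be proved by the same truncation test as in \cref{lem:par_LewyStamp}. Both properties pass to the $L^1(L^1)$ extension by density.

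As a consequence, for fixed $u,g$ and $s$ the difference quotients $\tau\mapsto (S_{\hat\psi}(u+\tau g)(s)-S_{\hat\psi}(u)(s))/\tau$ are a.e.\ monotone in $\tau$. They are also bounded in $L^1(\Omega)$ by $\|g\|_{L^1(L^1)}$, using \eqref{eq:LipCL1}. Monotone convergence then gives a.e.\ convergence, and I would upgrade this to $L^1(\Omega)$-convergence by an equi-integrability or domination argument. One option is to split $g=g^+-g^-$ and use that the quotients are squeezed between the quotients for $\pm|g|$. The step from $g$ to a sequence $g_n\to g$ (Hadamard differentiability) is immediate from the 1-Lipschitz bound \eqref{eq:LipCL1}.

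\textbf{Multiplier derivative.} Write $S'(s):=S_{\hat\psi}'(u;g)(s)$ for the limits obtained in the previous step. The quotients $\delta_n$ of $M_{\hat\psi}$ are bounded in $L^1(L^1)\subset \MM((0,T]\times\Omega)$ by \eqref{eq:M_Lip_1}, so weak-star accumulation points exist. The main obstacle is uniqueness of the limit, because the time derivative of the quotients is not controlled. My plan is to identify the limit through the state equation tested with $\varphi\in C_c^\infty((0,T]\times\Omega)$, taken smooth up to $t=T$ and vanishing at $t=0$ and at $\partial\Omega$. Integrating by parts in time gives
\[
\int_Q \delta_n\varphi = \int_Q g_n\varphi - \int_\Omega \frac{y_n(T)-y(T)}{\tau_n}\,\varphi(T) + \int_Q \frac{y_n-y}{\tau_n}\,(\partial_t\varphi+\Delta\varphi).
\]
Each term on the right converges: the first because $g_n\to g$ in $L^1(L^1)$, the second because the quotient at $s=T$ converges in $L^1(\Omega)$, and the third by dominated convergence in $t$ of the pointwise-in-$s$ $L^1(\Omega)$ limits, with the uniform bound from \eqref{eq:LipCL1}. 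Thus every accumulation point acts in the same way on a dense class of test functions in $C_0((0,T]\times\Omega)$. Together with the uniform total-variation bound, this yields convergence of the whole sequence weakly-star in $\MM((0,T]\times\Omega)$. The limit $M_{\hat\psi}'(u;g)$ is determined by $g-\partial_t S'+\Delta S'$ distributionally, with the terminal trace included.

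The endpoint $T$ is included in $(0,T]$ because mass of the quotients can concentrate near $t=T$. This is exactly why the term involving $\varphi(T)$ has to be kept.
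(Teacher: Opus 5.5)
Your overall route matches the paper's. You extend by density using the Lipschitz bounds in \cref{lem_deg_obstacle_probs}, use pointwise-a.e.\ curvature of $S_{\hat\psi}$ to make the difference quotients at a fixed time $s$ monotone in $\tau$, and identify the multiplier limit by testing with $\varphi\in C_c^\infty((0,T]\times\Omega)$ and integrating by parts. Your identity and the convergence of its three terms are what the paper uses.

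However, step (b) is wrong. The lower-obstacle map is pointwise a.e.\ \emph{convex}, not concave (compare the scalar model $y=\max(\hat\psi,u)$), and concavity fails in general. The cause is a sign slip. Since $\lambda_{\hat\psi}\le 0$, the state equation gives $\partial_t y_i-\Delta y_i=u_i-\lambda_{\hat\psi,i}\ge u_i$. Hence $w=\theta y_1+(1-\theta)y_2$ satisfies $\partial_t w-\Delta w\ge\theta u_1+(1-\theta)u_2$. It is an admissible \emph{super}solution, so your subsolution comparison rests on a false premise.

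The correct comparison is that the solution is the smallest admissible supersolution. Let $y_{12}:=S_{\hat\psi}(\theta u_1+(1-\theta)u_2)$. On $\{y_{12}>w\}$ one has $y_{12}>w\ge\hat\psi$, so the multiplier of $y_{12}$ vanishes there by \eqref{eq:bilateral_obst_stat_sys_deg}. Testing with $\max(0,y_{12}-w)\in L^2(H_0^1)$ then gives $y_{12}\le w$, which is \eqref{eq:pointwise_convex_S}. The paper gets the same inequality by testing the three EVIs with $y_i-\min(0,w-y_{12})$ and $y_{12}+\min(0,w-y_{12})$.

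Monotonicity of the quotients in $\tau$ holds under either curvature, so the rest of your argument survives once (b) is replaced by convexity, and it then coincides with the paper's. Set $\delta_n^g:=(S_{\hat\psi}(u+\tau_n g)(s)-S_{\hat\psi}(u)(s))/\tau_n$ for a nonincreasing sequence $\tau_n\to 0$. Convexity gives $\delta\le\delta_n^g\le\delta_1^g$ a.e.\ in $\Omega$, and the a.e.\ limit $\delta$ lies in $L^1(\Omega)$ by Fatou's lemma and the uniform bound $\|g\|_{L^1(L^1)}$. Dominated convergence then yields $L^1(\Omega)$-convergence directly, so the order preservation (a) is not needed.

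Your $\pm|g|$ squeeze can also be made to work, for instance via $|\delta_\tau^g|\le\delta_{\tau_1}^{|g|}$ for $\tau\le\tau_1$. That bound, however, uses (a) together with the correct, convex curvature.

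You also need the short argument that the limit does not depend on the chosen sequence $\{\tau_n\}$ and holds for non-monotone sequences, as in \cite[proof of Lemma 4.6]{Christof2019}.
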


\begin{proof}
That 
$S_{\hat \psi}$ and $M_{\hat \psi}$
admit unique extensions of the asserted type 
follows from the fact that 
the Lipschitz estimates 
\eqref{eq:M_Lip} 
and
\eqref{eq:LipCL1}
hold for $S_{\hat \psi}$ and $M_{\hat \psi}$
on $L^\infty(L^2)$
by \cref{lem_deg_obstacle_probs},
the density of $L^\infty(L^2)$ in $L^1(L^1)$,
the completeness of the spaces 
$C(L^1)$ and 
$L^1(L^1)$,
and 
\cite[Proposition 1.2.3]{Weaver1999}.
To obtain \eqref{eq:M_Lip_1},
one can simply extend \eqref{eq:M_Lip}
for $S_{\hat \psi}$ and $M_{\hat \psi}$
from 
$L^\infty(L^2)$ to $L^1(L^1)$ by continuity. 
It remains to prove the 
Hadamard 
directional 
differentiability properties in \eqref{eq:had_probs_unilatera}.

To this end, we first establish 
along the lines of the proof 
of \cite[Lemma 3.3]{ChristofWachsmuth2023} 
that the function  
$S_{\hat \psi}$
is pointwise-a.e.\ convex. 
Suppose that $u_1, u_2 \in L^\infty(L^2)$ and 
$\alpha \in [0,1]$ are given
and define 
$y_1 := S_{\hat \psi}(u_1)$,
$y_2 := S_{\hat \psi}(u_2)$,
$y_{12} := S_{\hat \psi}(\alpha u_1 + (1 - \alpha)u_2)$,
and 
$w := \alpha y_1 + (1 - \alpha)y_2 - y_{12}$.
Then 
\cite[Lemma A.1]{ChristofVexler2020} implies 
$\min(0,w) \in L^2(H_0^1) \cap H^1(L^2)$
and a simple distinction of cases shows that 
$y_1 - \min(0, w) \geq \hat \psi$,
$y_2 - \min(0, w) \geq \hat \psi$, and
$y_{12} + \min(0, w) \geq \hat \psi$ holds a.e.\ in $Q$. 
This allows us to use these functions as test functions in the 
EVIs satisfied by $y_1$, $y_2$, and $y_{12}$
and to obtain that 
\begin{equation*}
\begin{aligned}
\alpha \int_0^T \left \langle \partial_t y_1  -  \Delta y_1 -  u_1, - \min(0, w)\right  \rangle_{H_0^1(\Omega)}\mathrm{d}t \geq 0,
\\
(1-\alpha)
\int_0^T \left \langle  \partial_t y_2  -   
\Delta y_2 -  u_2, - \min(0, w)\right  \rangle_{H_0^1(\Omega)}\mathrm{d}t \geq 0,
\\
\int_0^T \left \langle \partial_t y_{12}  -  \Delta y_{12}  - \alpha u_1 - (1-\alpha)u_2, \min(0, w)\right  \rangle_{H_0^1(\Omega)}\mathrm{d}t \geq 0.
\end{aligned}
\end{equation*}
Adding the above and 
invoking \cite[Lemma A.1]{ChristofVexler2020} 
provides us with 
\begin{equation*}
\begin{aligned}
	0 &\geq  
    \int_0^T \left \langle \partial_t w  -  \Delta w, \min(0, w)\right  \rangle_{H_0^1(\Omega)}\mathrm{d}t 
    \geq
    \| \nabla \min(0, w) \|_{L^2([L^2]^d)}^2. 
\end{aligned}
\end{equation*}
Due to Poincaré's inequality, the above yields 
$\min(0, w) = 0$ a.e.\ in $Q$ and, thus, 
\begin{equation}
\label{eq:pointwise_convex_S}
    S_{\hat \psi}(\alpha u_1 + (1 - \alpha)u_2)
    \leq
    \alpha S_{\hat \psi}(u_1)
    +
    (1-\alpha)
    S_{\hat \psi}(u_2)\quad \text{a.e.\ in } Q
\end{equation}
as desired. 
Note that, since 
$y_1, y_2, y_{12} \in C(\bar Q)$ holds 
by \cref{lem_deg_obstacle_probs}, 
\eqref{eq:pointwise_convex_S} is even true everywhere 
in $\bar Q$ if continuous representatives are considered.
By continuity, we can extend \eqref{eq:pointwise_convex_S}
to the case $u_1, u_2 \in L^1(L^1)$. 
Along these lines, we obtain that 
\begin{equation}
\label{eq:pointwise_convex_S2}
    S_{\hat \psi}(\alpha u_1 + (1 - \alpha)u_2)(s)
    \leq
    \alpha S_{\hat \psi}(u_1)(s)
    +
    (1-\alpha)
    S_{\hat \psi}(u_2)(s)\quad \text{a.e.\ in } \Omega
\end{equation}
holds for all $u_1, u_2 \in L^1(L^1)$, $\alpha \in [0,1]$,
and $s \in [0,T]$. 
Assume now that $s \in [0,T]$
is fixed and that $u, g \in L^1(L^1)$ 
and a non-increasing
sequence $\{\tau_n\} \subset \left (0, \infty \right )$
with $\tau_n \to 0$ are given.
Then the  a.e.-convexity 
in \eqref{eq:pointwise_convex_S2}
implies that the difference quotients 
\[
\delta_n^g := \frac{S_{\hat \psi}(u + \tau_n g)(s) - S_{\hat \psi}(u)(s)}{\tau_n} \in L^1(\Omega),\qquad n \in \N,
\]
are non-increasing a.e.\ in $\Omega$ for $n \to \infty$.
From the lemma of Fatou and 
the 1-Lipschitz continuity of 
$S_{\hat \psi}\colon L^1(L^1) \to C(L^1)$, 
we further obtain that 
\[
\|g\|_{L^1(L^1)}
\geq \liminf_{n \to \infty}
\int_\Omega |\delta_n^g| \dd x
\geq
\int_\Omega \liminf_{n \to \infty}
 |\delta_n^g| \dd x. 
\]
Combined, the above two facts imply that there exists 
$\delta \in L^1(\Omega)$ such that 
\[
\lim_{n \to \infty} \delta_n^g( x) = \delta( x)
\text{ f.a.a.\ } x \in \Omega
\qquad
\text{and}
\qquad 
L^1(\Omega) \ni \delta \leq \delta_n^g \leq \delta_1^g \in L^1(\Omega)
\text{ a.e.\ in } \Omega. 
\]
From the dominated convergence theorem, we now get 
$\delta_n^g \to \delta$ in $L^1(\Omega)$ for \mbox{$n \to \infty$}. 
Using standard contradiction arguments (see
\cite[proof of Lemma 4.6]{Christof2019}), 
one easily checks that the limit 
$\delta$ is the same for all choices of $\{\tau_n\}$
and that $\delta_n^g \to \delta$ in $L^1(\Omega)$
also holds for arbitrary 
$\{\tau_n\} \subset \left (0, \infty \right )$
with $\tau_n \to 0$.
This shows that
$S_{\hat \psi}$ satisfies the directional 
differentiability condition in \eqref{eq:had_probs_unilatera}
with  
\smash{$S_{\hat \psi}'(u;g)(s) := \delta$}
for all $\{g_n\}$ with  
$g_n = g$ for all $n$.
That \eqref{eq:had_probs_unilatera} 
also holds for arbitrary sequences 
$\{g_n\}$ follows 
from the Lipschitz continuity 
of $S_{\hat \psi}\colon L^1(L^1) \to C(L^1)$ and \cite[Proposition~2.49]{BonnansShapiro2000}. 

It remains to prove the Hadamard directional 
differentiability of the map \smash{$M_{\hat \psi}$}. 
To this end, we assume that $u, g \in L^1(L^1)$,
$\{g_n\} \subset L^1(L^1)$,
and
$\{\tau_n\} \subset \left (0, \infty \right )$
satisfying 
$g_n \to g$ in $L^1(L^1)$
and $\tau_n \to 0$ are given
and define
\[
\mu_n^{g_n} := \frac{M_{\hat \psi}(u + \tau_n g_n) - M_{\hat \psi}(u)}{\tau_n} \in L^1(L^1).
\]
From \eqref{eq:M_Lip_1}, we get that 
$\{\mu_n^{g_n}\}$ is bounded in $L^1(Q)$ and, 
consequently, also bounded in $\MM((0,T] \times \Omega)$. 
Due to the theorem of Banach--Alaoglu, 
this implies that every subsequence of 
$\{\mu_n^{g_n}\}$ has a subsequence that 
converges weakly-star in $\MM((0,T] \times \Omega)$. 
From the definitions of $M_{\hat \psi}$
and $\mu_n^{g_n}$ and integration by 
parts,
we further get that 
\begin{equation*}
\begin{aligned}
\left \langle \mu_n^{g_n}, v\right \rangle_{C_0((0,T] \times \Omega)}
&=
\int_Q g_n v + \left (\partial_t v + \Delta v \right )
\frac{S_{\hat \psi}(u + \tau_n g_n) - S_{\hat \psi}(u)}{\tau_n}
\dd (t,x)
\\
&-
\int_\Omega v(T) 
\frac{S_{\hat \psi}(u + \tau_n g_n)(T) - S_{\hat \psi}(u)(T)}{\tau_n}
\dd x
~~\forall v \in C_c^\infty((0,T] \times \Omega).
\end{aligned}
\end{equation*}
In combination with the dominated convergence theorem,
the already established Hadamard 
directional differentiability of 
$S_{\hat \psi}$, and the density 
of $C_c^\infty((0,T] \times \Omega)$
in $ C_0((0,T] \times \Omega)$, 
the above implies that 
all weak-star accumulation points of $\{\mu_n^{g_n}\}$
for $n \to \infty$ are the same and 
that the resulting weak-star limit 
only depends on the limit $g$ of the 
sequence $\{g_n \}$.
This establishes the 
weak-star Hadamard directional differentiability 
of $M_{\hat \psi}$ in \eqref{eq:had_probs_unilatera}
and completes the proof. 
\end{proof}

We remark that 
\cref{lem:dir_diff_uni}
holds analogously 
for the unilateral case with an upper obstacle, i.e., 
$V_\psi = \emptyset$ and 
$\chi_\psi \equiv -\infty$.
We omit stating this as a separate result here. 
Note further that 
\cref{lem:dir_diff_uni}
also immediately implies the 
extendability 
and directional differentiability
of the maps 
$M_\psi$ and $M^\phi$.

\section{Directional derivatives for the  bilateral parabolic obstacle problem}
\label{sec:5}

We now have all preliminaries in place
to prove the directional differentiability 
of the solution map $S_\psi^\phi$
of the bilateral parabolic obstacle problem \eqref{eq:bilateral_obst_intro}. 
Note that, for this map, 
pointwise-a.e.\ curvature properties à la 
\eqref{eq:pointwise_convex_S} are not available.
One thus indeed needs new ideas here.
Throughout this section, we assume that the 
following holds.

\begin{assumption}[standing assumptions for \cref{sec:5}]~
\label{ass:standing:obstacle}~
\begin{enumerate}[label=\roman*)]
\item $T>0$ is a given terminal time;
\item $\Omega \subset \R^d$, $d \in \{1,2,3\}$, 
is a bounded domain that is 
convex or of class $C^{1,1}$;
\item $Q := (0,T) \times \Omega$ denotes the space-time cylinder;
\item $y_0 \in H_0^1(\Omega) \cap H^2(\Omega)$ 
is a given and fixed initial datum;
\item\label{ass:standing:obstacle:v} $\psi, \phi \in L^\infty(H^2) \cap W^{1,\infty}(L^2)$ 
are such that there exists $\varepsilon > 0$
with:
\begin{enumerate}[label=\alph*)]
\item $\psi(0) + \varepsilon 
\leq y_0 \leq \phi(0) - \varepsilon$ in $\bar \Omega$;
\item  
$ \psi \leq - \varepsilon \leq \varepsilon \leq \phi$
on $[0,T] \times \partial \Omega$;
\item 
$\psi + \varepsilon \leq \phi$ in $\bar Q$;
\end{enumerate}
\item $\bar u \in L^\infty(L^2)$ is given and fixed.
\end{enumerate}
\end{assumption}
\pagebreak

Note that 
$y_0$, $\psi$,
and $\phi$ possess 
$C(\bar \Omega)$-,
respectively, 
$C(\bar Q)$-representatives 
in the situation of
Assumption
\ref{ass:standing:obstacle}
since 
$H^2(\Omega) \hookrightarrow C(\bar \Omega)$
and 
$ L^\infty(H^2) \cap W^{1,\infty}(L^2) \hookrightarrow C(\bar Q)$;
see 
\cite[Theorem 1.41]{Troianiello1987}
and
\cite[Theorem 1.34]{Hafemeyer2020}.
The conditions 
in 
\ref{ass:standing:obstacle:v}
can thus indeed be formulated pointwise. 
Note further that  
all of the results 
of \cref{sec:4} can be applied under
Assumption \ref{ass:standing:obstacle}.
Accordingly, 
we also keep using the notation for 
solution operators and 
multiplier maps from \cref{def:sol_ops} in this section. 
The starting point of our analysis is the following 
observation.

\begin{lemma}[fixed-point equation, unmodified version]%
\label{lem:random_lem25}%
If, for every $u \in L^\infty(L^2)$,
we define 
$\lambda^\phi(u) := \max(0,  M_{\psi}^\phi(u))$
and 
$\lambda_\psi(u) := \min(0,  M_{\psi}^\phi(u))$,
then it holds 
\begin{equation}
\label{eq:FP_bilateral}
\begin{matrix}
\lambda^\phi(u) = M^\phi(u - \lambda_\psi(u))
\\
\lambda_\psi(u) = M_\psi(u - \lambda^\phi(u))
\end{matrix}
\qquad 
\forall u \in L^\infty(L^2).
\end{equation}
Further,
for all $u \in L^\infty(L^2)$,
we have that 
$S_\psi^\phi(u)
=
S^\phi(u - \lambda_\psi(u))
=
S_\psi(u - \lambda^\phi(u))
$.
\end{lemma}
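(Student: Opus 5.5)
The plan is to identify the bilateral multiplier $M_\psi^\phi(u)$ with its two parts from \cref{prop:ex_uniqu}, and then to read off each unilateral complementarity system. Fix $u \in L^\infty(L^2)$ and set $y := S_\psi^\phi(u)$. By \cref{prop:ex_uniqu}iii) there are multipliers $\lambda_{\psi}^*, \lambda^{\phi,*} \in L^2(L^2)$ such that \eqref{eq:bilateral_obst_stat_sys} holds. By \cref{def:sol_ops}, $M_\psi^\phi(u) = \lambda_\psi^* + \lambda^{\phi,*}$.

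The first step is to show that $\lambda^{\phi,*} = \max(0, M_\psi^\phi(u))$ and $\lambda_\psi^* = \min(0, M_\psi^\phi(u))$ a.e. in $Q$. We need that $\lambda_\psi^*$ and $\lambda^{\phi,*}$ never both differ from zero at the same point. Where $\lambda_\psi^* \neq 0$, complementarity gives $y = \psi$. Where $\lambda^{\phi,*}\neq 0$, it gives $y = \phi$. Assumption \ref{ass:standing:obstacle}\ref{ass:standing:obstacle:v}c) says $\psi + \varepsilon \leq \phi$, so $\psi < \phi$ everywhere and both cannot hold. The product $\lambda_\psi^*\lambda^{\phi,*}$ therefore vanishes a.e. Combined with the signs $\lambda_\psi^* \le 0 \le \lambda^{\phi,*}$, the sum $\lambda_\psi^* + \lambda^{\phi,*}$ splits into its positive and negative parts as claimed. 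So $\lambda^\phi(u) = \lambda^{\phi,*}$ and $\lambda_\psi(u) = \lambda_\psi^*$. In particular, both lie in $L^\infty(L^2)$, because $M_\psi^\phi(u)$ does by \cref{cor:continuity_solutions} and truncation preserves this. Hence $u - \lambda_\psi(u)$ and $u - \lambda^\phi(u)$ are admissible arguments for the maps from \cref{def:sol_ops}.

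The second step is to verify the upper unilateral system \eqref{eq:unlateral_upper_obst_stat_sys} for the data $\tilde u := u - \lambda_\psi(u) \in L^2(L^2)$, using the triple $(y, \lambda^\phi(u))$. Rewrite the PDE as $\partial_t y - \Delta y = (u - \lambda_\psi(u)) - \lambda^\phi(u)$. The regularity of $y$, the initial condition, and the conditions $\lambda^\phi(u) \geq 0$, $y \le \phi$, $\lambda^\phi(u)(y-\phi)=0$ all come directly from \eqref{eq:bilateral_obst_stat_sys}. By \cref{prop:ex_uniqu}ii), $y$ solves \eqref{eq:bilateral_obst_weak} with $\KK = \KK^\phi$ and data $\tilde u$, and uniqueness gives $y = S^\phi(\tilde u)$. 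Then, by \cref{def:sol_ops},
\[
M^\phi(\tilde u) = \tilde u - \partial_t y + \Delta y = \lambda^\phi(u).
\]
This yields the first line of \eqref{eq:FP_bilateral} and $S_\psi^\phi(u) = S^\phi(u-\lambda_\psi(u))$. The lower case is symmetric: check \eqref{eq:unilateral_lower_obst_stat_sys} for $u - \lambda^\phi(u)$ with multiplier $\lambda_\psi(u)$, and use \cref{prop:ex_uniqu}i).

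The only non-bookkeeping point is the disjointness of the supports in the first step. It rests on the strict separation $\psi + \varepsilon \le \phi$; without it, the positive and negative parts of $M_\psi^\phi(u)$ need not coincide with $\lambda^{\phi,*}$ and $\lambda_\psi^*$. Everything else is a direct application of \cref{prop:ex_uniqu} and uniqueness.
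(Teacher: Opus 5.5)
Your proposal is correct and follows essentially the same route as the paper. The strict separation $\psi + \varepsilon \leq \phi$ forces the two bilateral multipliers to have a.e.\ disjoint supports, so they coincide with the positive and negative parts of $M_\psi^\phi(u)$, and the unilateral identities then follow by comparing the complementarity systems and using the uniqueness in \cref{prop:ex_uniqu}. The only difference is cosmetic: the paper uses the $C(\bar Q)$-representatives from \cref{cor:continuity_solutions} to see that the contact sets are disjoint, whereas your purely a.e.\ argument does not need the continuity of $y$.
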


\begin{proof}
For $u \in L^\infty(L^2)$,
we have \smash{$y := S_\psi^\phi(u) \in C(\bar Q)$}
by \cref{cor:continuity_solutions}.
Since 
$\psi, \phi \in C(\bar Q)$
and 
$\psi + \varepsilon \leq \phi$ in $\bar Q$ for some $\varepsilon > 0$, it follows that the sets 
$\{y = \psi\}$ and $\{y = \phi\}$
(defined as subsets of $\bar Q$ w.r.t.\ $C(\bar Q)$-representatives)
are disjoint. This, in turn, implies  
in combination with the definition of
\smash{$M_\psi^\phi(u)$} that
the 
functions $\lambda^\phi$ and $\lambda_\psi$
in the complementarity system 
\eqref{eq:bilateral_obst_stat_sys} for $u$ 
are precisely 
\smash{$\max(0,  M_{\psi}^\phi(u))$}
and 
\smash{$\min(0,  M_{\psi}^\phi(u))$}. 
The assertions now follow immediately 
by comparing \eqref{eq:bilateral_obst_stat_sys}
with the systems 
\eqref{eq:unilateral_lower_obst_stat_sys}
and
\eqref{eq:unlateral_upper_obst_stat_sys}
for the unilateral cases. 
\end{proof}

Note that \eqref{eq:FP_bilateral}
is precisely of the form \eqref{eq.M_FP},
respectively, \eqref{eq:F}.
From \cref{lem:mult_contractive}, we also get
that the functions 
$\lambda^\phi, \lambda_\psi\colon L^\infty(L^2) \to L^1(L^1)$ 
in \cref{lem:random_lem25}
are globally Lipschitz continuous and
that a Lipschitz estimate of 
the type
\eqref{eq:Lipschitz_estimate_abstract}
holds in the situation of \eqref{eq:FP_bilateral}
in $[L^1(L^1)]^2$.
There are, however, two main issues
at this point that 
prevent the application 
of \cref{th:abstract_dir_diff} 
to \eqref{eq:FP_bilateral}.
First, in Assumption \ref{ass:standing:abstract},
we require that $F$ lives on 
a dual space $\Lambda$ that is 
compatible with the Lipschitz estimate 
satisfied by $F$ and $G$. This is
not the case 
in \eqref{eq:FP_bilateral} where, presently, 
all functions map between $L^p(L^q)$-spaces and
the Lipschitz estimates involve $L^1(L^1)$-norms. 
Second, for the multiplier maps 
$M^\phi$ and $M_\psi$ appearing on the right-hand side
of \eqref{eq:FP_bilateral}, 
we only have the Hadamard 
directional differentiability results in 
\cref{lem:dir_diff_uni} which only allow 
for strongly convergent sequences of 
directions $\{g_n\}$ but not for 
weakly-star convergent ones,
as required in 
Assumption \ref{ass:standing:abstract}\ref{ass:standing:abstract:iv}c).
In what follows, we show that the above 
two problems can be overcome by means 
of the extension results in \cref{lem:dir_diff_uni}
and a localization argument that has, 
in a different context, 
already been used in 
\cite[sect.~6]{ChristofWachsmuth2025}
to prove that the solution map of the 
elliptic bilateral obstacle problem 
is Newton differentiable.
We begin by noting that we have the following lemma.

\begin{lemma}[stability and disjointness of contact sets]%
\label{lem_disjoint_contact_sets}%
There exist $r>0$
and nonempty compact disjoint sets 
$Z_\psi, Z^\phi \subset (0, T] \times \Omega$ such that
\begin{align*}
\{S_\psi^\phi(u) = \psi \}
\subset Z_\psi
\quad \text{and}\quad
\{S_\psi^\phi(u) = \phi \}
\subset Z^\phi
\qquad 
\forall u \in B_r^{L^\infty(L^2)}(\bar u). 
\end{align*}
Here, the contact sets are defined
as subsets of $\bar Q$ w.r.t.\
the $C(\bar Q)$-representatives of the involved functions. 
\end{lemma}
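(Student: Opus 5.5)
The plan is to combine the uniform $C(\bar Q)$-Lipschitz estimate of \cref{lem:Lipschitz} with the strict separation properties in Assumption \ref{ass:standing:obstacle}\ref{ass:standing:obstacle:v}. Let $\varepsilon>0$ be the constant from that assumption, and set $\bar y := S_\psi^\phi(\bar u) \in C(\bar Q)$ (see \cref{cor:continuity_solutions}). Choose $r := \varepsilon/(4\hat C)$ with $\hat C$ from \cref{lem:Lipschitz}. Then every $u \in B_r^{L^\infty(L^2)}(\bar u)$ satisfies
\[
\|S_\psi^\phi(u) - \bar y\|_{C(\bar Q)} \leq \varepsilon/4 .
\]
The candidate sets are
\[
Z_\psi := \{(t,x) \in \bar Q \mid \bar y(t,x) - \psi(t,x) \leq \varepsilon/4\},
\qquad
Z^\phi := \{(t,x) \in \bar Q \mid \phi(t,x) - \bar y(t,x) \leq \varepsilon/4\}.
\]
If necessary, I would enlarge these to sublevel sets of level $\varepsilon/2$ or add a point to ensure nonemptiness; this is discussed below.

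The first step is the inclusions. If $S_\psi^\phi(u)(t,x) = \psi(t,x)$, then $\bar y - \psi \leq \varepsilon/4$ at $(t,x)$, so $(t,x) \in Z_\psi$. The upper obstacle is handled in the same way.

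The second step is to show that $Z_\psi$ is compact and contained in $(0,T]\times\Omega$. Compactness is clear, since $Z_\psi$ is closed in the compact set $\bar Q$ by continuity of $\bar y$ and $\psi$. It remains to rule out the parabolic boundary.

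On $\{0\}\times\bar\Omega$ we have $\bar y(0)=y_0 \ge \psi(0)+\varepsilon$, so $\bar y-\psi \ge \varepsilon > \varepsilon/4$ there. On $[0,T]\times\partial\Omega$, the traces give $\bar y = 0$ because $\bar y\in L^2(H_0^1)\cap C(\bar Q)$, while $\psi\le -\varepsilon$. Hence $\bar y-\psi\ge\varepsilon$ on $[0,T]\times\partial\Omega$ as well.

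The step needing the most care is justifying that the $C(\bar Q)$-representative of $\bar y$ vanishes pointwise on $[0,T]\times\partial\Omega$. I would use $\bar y \in C(\bar Q)\cap L^2(H_0^1)$: for a.a. $t$ we have $\bar y(t)\in H_0^1(\Omega)\cap C(\bar\Omega)$, which forces $\bar y(t)=0$ on $\partial\Omega$, for instance via the standard characterization of $H_0^1\cap C(\bar\Omega)$ on Lipschitz or $C^{1,1}$ domains. Continuity in $t$ then extends this to all $t \in [0,T]$.

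Consequently $Z_\psi \cap (\{0\}\times\bar\Omega \cup [0,T]\times\partial\Omega)=\emptyset$, so $Z_\psi\subset (0,T]\times\Omega$. The argument for $Z^\phi$ is symmetric.

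The third step is disjointness. On $Z_\psi\cap Z^\phi$ we would have
\[
\phi-\psi = (\phi-\bar y)+(\bar y-\psi)\le \varepsilon/2,
\]
which contradicts $\psi+\varepsilon\le\phi$ in $\bar Q$.

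Finally, nonemptiness is not essential for the argument and only matters if the statement literally requires it. Should $Z_\psi$ be empty, I would replace it by a closed ball $\bar B_\eta(p)$ around some point $p\in(0,T]\times\Omega$ chosen away from $Z^\phi$, or simply add a single point. This is possible because $Z^\phi$ is compact and cannot cover all of $(0,T]\times\Omega$: it avoids the boundary, so points of $(0,T]\times\Omega$ near the boundary lie outside it.
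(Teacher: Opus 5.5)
Your proof is correct and takes essentially the paper's route. The paper also combines the $C(\bar Q)$-Lipschitz estimate of \cref{lem:Lipschitz} with the separation conditions in Assumption~\ref{ass:standing:obstacle}\ref{ass:standing:obstacle:v}, and delegates to \cite[Lemma~6.14]{ChristofWachsmuth2025} exactly the sublevel-set construction you carry out, including the parabolic-boundary check and disjointness via $\psi+\varepsilon\le\phi$. One small remark: drop the parenthetical option of enlarging to level $\varepsilon/2$. That option does not guarantee nonemptiness, and it breaks disjointness, since it only gives $\phi-\psi\le\varepsilon$ on the intersection. Adding an isolated point away from the other set, as you do at the end, is the right fix.
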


\begin{proof}
From \cref{lem:Lipschitz}, we know that 
$S_\psi^\phi$ is globally Lipschitz continuous 
as a function from $L^\infty(L^2)$ to $C(\bar Q)$.
Because of this continuity
and our assumptions on $\psi$ and $\phi$,
we can argue along the exact same lines 
as in the proof of 
\cite[Lemma 6.14]{ChristofWachsmuth2025} to obtain the assertion of the lemma. 
\end{proof}

\Cref{lem_disjoint_contact_sets} allows us 
to modify the 
right-hand side 
of \eqref{eq:FP_bilateral} 
such that 
it involves obstacles only on 
those parts of 
$\bar Q$ that are relevant 
for parameters $u$ near 
$\bar u$.
To formalize this idea, 
we introduce the following setting.

\begin{assumption}[{setting for the local 
modification of \eqref{eq:FP_bilateral}}]%
\label{ass:localization_setting}
\begin{enumerate}[label=\roman*)]
\item 
$r>0$, $Z_\psi$, and $Z^\phi$ are
as in \cref{lem_disjoint_contact_sets};
\item 
$V_\psi, V^\phi \subset (0, \infty) \times \Omega$ 
are open 
bounded sets 
satisfying 
$Z_\psi \subset V_\psi$,
$Z^\phi \subset V^\phi$, 
$\bar V_\psi, \bar V^\phi \subset (0, \infty) \times \Omega$,
and 
$\dist(V_\psi, V^\phi) > 0$;
\item $W_\psi, W^{\phi} \subset 
(0, \infty) \times \Omega$ are open bounded  sets
satisfying
$\bar V_\psi \subset W_\psi$,
$\bar V^\phi\subset W^\phi$,
$\bar W_\psi, \bar W^\phi \subset (0, \infty) \times \Omega$,
and 
$\dist(W_\psi, W^\phi) > 0$;
\item 
$\chi_\psi\colon \R^{d+1} \to [-\infty, 0]$
is an upper semicontinuous function that is
real-valued on $V_\psi$ and satisfies
$\chi_\psi|_{V_\psi} \in C^2(V_\psi)$,
$\chi_\psi \equiv 0$ on $Z_\psi$,
$\chi_\psi \equiv - \infty$ on 
$\R^{d+1} \setminus V_\psi$;
\item 
$\chi^\phi\colon \R^{d+1} \to [0, \infty]$ 
is a lower semicontinuous function 
that is real-valued on $V^\phi$ and satisfies 
$\chi^\phi|_{V^\phi} \in C^2(V^\phi)$,
$\chi^\phi \equiv 0$ on $Z^\phi$,
$\chi^\phi \equiv \infty$ on 
$\R^{d+1} \setminus V^\phi$;
\item 
$\zeta_\psi 
\in C^\infty(\R^{d+1}) $ is a function satisfying 
$0 \leq \zeta_\psi \leq 1$ in $\R^{d+1}$,
$\zeta_\psi \equiv 0$ in an open neighborhood of 
$\bar V_\psi$, $\zeta_\psi \equiv 1$ in 
$\R^{d+1} \setminus W_\psi$;
\item
$\zeta^\phi \in C^\infty(\R^{d+1}) $ is a 
function satisfying 
$0 \leq \zeta^\phi \leq 1$ in $\R^{d+1}$,
$\zeta^\phi \equiv 0$ 
in an open neighborhood of $\bar V^\phi$,
$\zeta^\phi \equiv 1$ in $\R^{d+1} \setminus W^\phi$;
\item $\hat \psi := \psi + \smash{\chi_\psi}$ 
and $\hat \phi := \phi + \chi^\phi$
are as in \eqref{eq:hat_obstacles},
and 
$\smash{M_{\hat \psi}, M^{\hat \phi}}\colon L^1(L^1) \to L^1(L^1)$
and 
$S_{\hat \psi}, S^{\hat \phi}\colon L^1(L^1) \to C(L^1)$
are
the (extended) multiplier 
and solution maps of the unilateral 
parabolic obstacle problems 
that have only the lower obstacle $\hat \psi$,
respectively, only the upper obstacle $\hat \phi$;
see \cref{lem_deg_obstacle_probs,lem:dir_diff_uni}.
\end{enumerate}
\end{assumption}

Note that, due to the nonemptiness, compactness, 
and disjointness of 
the sets $Z_\psi$ and $Z^\phi$ in \cref{lem_disjoint_contact_sets},
sets $V_\psi, V^\phi, W_\psi, W^\phi$ and 
functions $\chi_\psi, \chi^\phi, \zeta_\psi, \zeta^\phi$
with the properties in 
Assumption \ref{ass:localization_setting}
can easily be constructed. 
As a first modification of \eqref{eq:FP_bilateral},
we now obtain:

\begin{lemma}[{first modification of \eqref{eq:FP_bilateral}}]%
\label{lem:first_mod}%
Consider the situation in 
Assumption 
\ref{ass:localization_setting} and 
define 
$\lambda^\phi(u) := \max(0,  M_{\psi}^\phi(u))$
and 
$\lambda_\psi(u) := \min(0,  M_{\psi}^\phi(u))$
for all $u \in L^\infty(L^2)$. 
Then it holds 
\begin{equation}
\label{eq:FP_bilateral_mod1}
\begin{matrix}
\lambda^\phi(u) = M^{\hat\phi}(u - \lambda_\psi(u))
\\
\lambda_\psi(u) = M_{\hat \psi}(u - \lambda^\phi(u))
\end{matrix}
\qquad 
\forall u \in B_r^{L^\infty(L^2)}(\bar u).
\end{equation}
Further,
for all $u \in B_r^{L^\infty(L^2)}(\bar u)$,
we have 
$S_\psi^\phi(u)
=
S^{\hat \phi}(u - \lambda_\psi(u))
=
S_{\hat \psi}(u - \lambda^\phi(u))
$.
\end{lemma}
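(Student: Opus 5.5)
Fix $u \in B_r^{L^\infty(L^2)}(\bar u)$ and set $y := S_\psi^\phi(u)$. By \cref{lem:random_lem25}, $y = S^\phi(u - \lambda_\psi(u))$, i.e., $y$ solves the unilateral upper obstacle problem with parameter $w := u - \lambda_\psi(u) \in L^\infty(L^2)$, with multiplier $\lambda^\phi(u) = M^\phi(w)$. The plan is to show directly that $y$ also solves the problem with the singular obstacle $\hat\phi = \phi + \chi^\phi$ and parameter $w$, by verifying the complementarity system \eqref{eq:bilateral_obst_stat_sys_deg} with $V_\psi = \emptyset$. Uniqueness from \cref{lem_deg_obstacle_probs}\ref{lem_deg_obstacle_probs:ii} then gives $S^{\hat\phi}(w) = y$ and $M^{\hat\phi}(w) = w - \partial_t y + \Delta y = \lambda^\phi(u)$. (Since $w \in L^\infty(L^2)$, the extended maps from \cref{lem:dir_diff_uni} coincide here with the original ones.) The lower-obstacle identity follows by the symmetric argument with $\hat\psi$, $V_\psi$, $Z_\psi$.

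To verify the system, take $\lambda^{\hat\phi} := \lambda^\phi(u) = \max(0, M_\psi^\phi(u))$. The regularity, the PDE $\partial_t y - \Delta y = w - \lambda^{\hat\phi}$, the initial condition, and $\lambda^{\hat\phi} \ge 0$ are inherited from \eqref{eq:unlateral_upper_obst_stat_sys}. The constraint $y \le \hat\phi$ holds on $V^\phi \cap Q$ because $\chi^\phi \ge 0$ gives $\hat\phi \ge \phi \ge y$. It remains to check two things:
\begin{enumerate}[label=\alph*)]
\item $\lambda^{\hat\phi}(y - \hat\phi) = 0$ a.e.\ in $V^\phi \cap Q$;
\item $\lambda^{\hat\phi} = 0$ a.e.\ in $Q \setminus V^\phi$.
\end{enumerate}

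Both reduce to localizing the support of $\lambda^\phi(u)$. By complementarity, $\lambda^\phi(u) = 0$ a.e.\ on $\{y < \phi\}$, so $\lambda^\phi(u)$ lives a.e.\ on $\{y = \phi\}$. By \cref{lem_disjoint_contact_sets}, working with $C(\bar Q)$-representatives, this set is contained in $Z^\phi \subset V^\phi$; this gives (b). On $Z^\phi$ we have $\chi^\phi = 0$, hence $\hat\phi = \phi = y$ there, and off $\{y = \phi\}$ the multiplier vanishes a.e.; this gives (a).

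For the final assertion, the identity $S^{\hat\phi}(u - \lambda_\psi(u)) = S^\phi(u - \lambda_\psi(u)) = S_\psi^\phi(u)$ was shown above, and analogously $S_{\hat\psi}(u - \lambda^\phi(u)) = S_\psi^\phi(u)$. The main delicate point is the measure-theoretic passage from the pointwise set inclusion $\{y = \phi\} \subset Z^\phi$ (for continuous representatives) to a.e.\ statements about the $L^2$-multiplier. It is harmless because the a.e.\ complementarity relation already forces $\lambda^\phi(u) = 0$ a.e.\ outside the closed contact set $\{y = \phi\}$.
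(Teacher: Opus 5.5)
Your proof is correct and follows essentially the same route as the paper. You use \cref{lem_disjoint_contact_sets} to localize $\lambda^\phi(u)$ and $\lambda_\psi(u)$ a.e.\ to $Z^\phi\subset V^\phi$ and $Z_\psi\subset V_\psi$, exploit $\chi^\phi=0$ on $Z^\phi$ and $\chi^\phi\ge 0$ (and the analogous facts for $\chi_\psi$), and conclude via the complementarity characterization and uniqueness in \cref{lem_deg_obstacle_probs}. The only differences are cosmetic: you start from the unilateral identities of \cref{lem:random_lem25} rather than directly from the bilateral system \eqref{eq:bilateral_obst_stat_sys}. Also, in step (a) the equality $\hat\phi=\phi=y$ holds on $\{y=\phi\}\subset Z^\phi$, not on all of $Z^\phi$; that is what your argument actually uses.
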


\begin{proof}
If $u \in B_r^{L^\infty(L^2)}(\bar u)$ with 
associated $y := S_\psi^\phi(u)$ is given, 
then we know from \cref{lem_disjoint_contact_sets}
that $\{y = \psi\} \subset Z_\psi$ and 
$\{y = \phi\} \subset Z^\phi$. In view of 
the system 
\eqref{eq:bilateral_obst_stat_sys}
and \cref{cor:continuity_solutions},
this implies that 
$y$, $u$, 
$\lambda_\psi(u)$,
and
$\lambda^\phi(u)$
satisfy 
\begin{equation*}
\begin{gathered}
y \in L^2(H_0^1\cap H^2) \cap H^1(L^2),
\quad 
\lambda_\psi(u), \lambda^\phi(u) \in L^\infty(L^2),
\quad y(0) = y_0, 
\\
\partial_t y  - \Delta y  = 
u - \lambda_\psi(u) - \lambda^\phi(u)\quad \text{ a.e.\ in }Q,
\\
\lambda_\psi(u) \leq 0,
\qquad  
y - \psi \geq 0,
\qquad
\lambda_\psi(u)(y - \psi) = 0\quad \text{ a.e.\ in }Q,
\\
\lambda^\phi(u) \geq 0,
\qquad  
y - \phi \leq 0,
\qquad
\lambda^\phi(u)(y - \phi) = 0\quad \text{ a.e.\ in }Q,
\\
\lambda_{\psi}(u) = 0~\text{ a.e.\ in }Q \setminus Z_\psi,
\qquad
\lambda^{\phi}(u) = 0~\text{ a.e.\ in }Q \setminus Z^\phi.
\end{gathered}
\end{equation*}
If we compare the above with the 
complementarity systems
for the unilateral obstacle problems 
that involve only
the obstacle $\hat \psi$, respectively, 
$\hat \phi$, see \eqref{eq:bilateral_obst_stat_sys_deg},
keeping in mind that $\phi = \hat  \phi$
a.e.\ on $Z^\phi$, 
$\psi = \hat  \psi$
a.e.\ on $Z_\psi$,
$\hat \psi \leq \psi$ 
a.e.\ on $Q$,
and 
$\hat \phi \geq \phi$ a.e.\ on $Q$,
then the assertions 
of the lemma follow immediately
from \cref{lem_deg_obstacle_probs}.
\end{proof}

Note that the fixed-point equation 
\eqref{eq:FP_bilateral_mod1} still does not fit 
into the framework of \cref{sec:3}. 
To modify it further, we 
have to extend it to spaces of measures and 
insert an additional operator that creates 
compactness. We prepare this by proving:

\begin{lemma}[strict density in spaces of measures]%
\label{lemma:strict_density}%
Let $V \subset \R^N$, $N \in \N$, 
be a nonempty, open, bounded set.
Then there exist maps 
$R_{m,n} \in \LL(\MM(\bar V), C(\bar V))$
and 
$R_{n} \in \LL(\MM(\bar V), \MM(\bar V))$, $m,n \in \N$, such that,
for all $\mu \in \MM(\bar V)$, it holds 
\begin{gather*}
R_{m,n}\mu \in C_c^\infty(V)~\forall m,n \in \N,
\\
R_{m,n}\mu \weaklystar R_n\mu \text{ in } \MM(\bar V) 
~\text{and}~
\|R_{m,n}\mu\|_{\MM(\bar V)} 
\to 
\|R_{n}\mu\|_{\MM(\bar V)} \text{ for } m \to \infty, \forall n\in \N,
\\
R_{n}\mu \weaklystar \mu \text{ in } \MM(\bar V) 
~\text{and}~
\|R_{n}\mu\|_{\MM(\bar V)} 
\to 
\|\mu\|_{\MM(\bar V)} \text{ for } n \to \infty.
\end{gather*}
\end{lemma}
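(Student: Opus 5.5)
We will build the operators in two stages: first push the measure into the interior of $V$ (this gives $R_n$), then mollify the pushed measure (this gives $R_{m,n}$). The reason for this order is that a measure on $\bar V$ may charge $\partial V$, so plain convolution would not produce functions in $C_c^\infty(V)$.

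\emph{Construction of $R_n$.} Since $V$ is only open and bounded, we cannot use a global contraction. Instead, we fix a point and pick measurable maps $\pi_n\colon \bar V \to V$ with the following properties:
\begin{itemize}
\item $\pi_n(\bar V)$ is contained in a compact set $K_n\subset V$;
\item $|\pi_n(x)-x|\le 1/n$ for all $x\in\bar V$;
\item $\pi_n$ is continuous, so that push-forward is weak-star continuous and behaves well.
\end{itemize}
Continuity on all of $\bar V$ together with values in the interior is problematic for general $V$ (for example, a slit domain). We therefore expect to relax this requirement to Borel measurability plus uniform closeness $|\pi_n(x)-x|\le 1/n$. To build such maps, we partition $\bar V$ into finitely many Borel sets of diameter at most $1/n$, each meeting $V$, and send each piece to a chosen point of $V$ inside that piece. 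We then set $R_n\mu:=(\pi_n)_\#\mu$, which is a finite sum of Dirac masses at interior points.

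The properties of $R_n$ follow directly:
\begin{itemize}
\item $R_n$ is linear and satisfies $\|R_n\mu\|\le\|\mu\|$.
\item For $f\in C(\bar V)$, uniform continuity gives $\int f\,\dd R_n\mu=\int f\circ\pi_n\,\dd\mu\to\int f\,\dd\mu$. This is the weak-star convergence in $\MM(\bar V)=C(\bar V)^*$.
\item Lower semicontinuity of the norm, combined with $\|R_n\mu\|\le\|\mu\|$, yields $\|R_n\mu\|\to\|\mu\|$.
\end{itemize}

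\emph{Construction of $R_{m,n}$.} The measure $R_n\mu=\sum_k\mu(A_k)\delta_{x_k}$ is supported at finitely many points $x_k\in V$. Let $\varrho_m$ be a standard mollifier whose support radius is below both $1/m$ and $\tfrac12\min_k\dist(x_k,\partial V)$. We set
\[
R_{m,n}\mu:=\sum_k\mu(A_k)\varrho_m(\cdot-x_k).
\]
This defines a linear map into $C_c^\infty(V)\subset C(\bar V)$, and it is bounded. Weak-star convergence $R_{m,n}\mu\weaklystar R_n\mu$ as $m\to\infty$ follows from uniform continuity of test functions.

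For the norms, when $m$ is so large that the supports are pairwise disjoint, we get $\|R_{m,n}\mu\|_{L^1}=\sum_k|\mu(A_k)|=\|R_n\mu\|$ exactly. If the chosen points $x_k$ coincide, we merge the corresponding pieces first, so that the $x_k$ are distinct.

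\emph{Main obstacle.} The delicate step is designing $\pi_n$ so that the bound $|\pi_n(x)-x|\le1/n$ holds uniformly and $\pi_n$ lands in $V$. This matters when $\bar V$ and $V$ differ badly near the boundary. The partition argument handles it: every point of $\bar V$ has a point of $V$ within $1/(2n)$, so we can cover $\bar V$ by finitely many balls $B_{1/(2n)}(v_j)$ with centers $v_j\in V$. From this cover we take a disjoint Borel refinement and send each piece to its center $v_j$.
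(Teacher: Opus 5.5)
Your proof is correct and follows essentially the paper's route. You discretize $\mu$ into finitely many atoms $\sum_j \mu(A_j)\delta_{x_j}$ over a partition of $\bar V$ into small Borel pieces, get $\|R_n\mu\|_{\MM(\bar V)}\to\|\mu\|_{\MM(\bar V)}$ from the bound $\sum_j|\mu(A_j)|\le|\mu|(\bar V)$ plus weak-star lower semicontinuity, and then replace each atom by a normalized mollifier bump with eventually disjoint supports to get exact mass equality. The only difference is where the atoms sit: you place them at interior points from the start, via a finite cover of $\bar V$ by balls of radius $1/(2n)$ centered in $V$ (which also correctly replaces your first, not automatically realizable, ``each piece meets $V$'' partition), whereas the paper uses the cubes $Q_{i,n}$ with atoms $x_{i,n}\in\bar V\cap Q_{i,n}$, possibly on $\partial V$, and then shifts the bump centers $x_{i,n,m}\in V$ toward $x_{i,n}$ as $m\to\infty$; your variant makes the construction of $R_{m,n}$ slightly simpler.
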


\begin{proof}
We consider for every $n \in \N$ the partition of 
$\R^N$ into the disjoint cells 
$ Q_{i,n} := (1/n)(i + [0,1)^N )$, $i \in \Z^N$,  define 
$\II_n := \{i \in \Z^N \mid \bar V \cap Q_{i,n} \neq \emptyset\}$,
and choose for every $i \in \II_n$ an arbitrary 
but fixed $x_{i,n} \in \bar V \cap Q_{i,n}$. For this selection, we define 
\begin{equation}
\label{eq:RnDef}
    R_{n}\mu := \sum_{i \in \II_n} \mu(\bar V \cap Q_{i,n})\delta_{x_{i,n}}
    \in \MM(\bar V)\qquad \forall \mu \in \MM(\bar V),
\end{equation}
where $\delta_{x_{i,n}}$ denotes a Dirac measure supported at $x_{i,n}$. 
Note that \eqref{eq:RnDef} trivially defines a map $R_n \in \LL(\MM(\bar V), \MM(\bar V))$. For every $\mu \in \MM(\bar V)$ and $w \in C(\bar V)$, we further have 
\begin{align}
\label{eq:randomeq272g378n}
\left \langle R_n \mu, w\right \rangle_{C(\bar V)}
=
\sum_{i \in \II_n} \mu(\bar V \cap Q_{i,n})w(x_{i,n})
=
\int_{\bar V}
\sum_{i \in \II_n} \mathds{1}_{\bar V \cap Q_{i,n}} w(x_{i,n})
\dd \mu.
\end{align}
Since $\bar V$ is compact, $\diam(Q_{i,n}) = \sqrt{N}/n$,
and continuous functions on compact sets are 
uniformly continuous, 
we further have for all $w \in C(\bar V)$ that 
\[
\sup_{x \in \bar V} \left | w(x) - \sum_{i \in \II_n} \mathds{1}_{\bar V \cap Q_{i,n}}(x) w(x_{i,n}) \right | 
=
\sup_{i \in \II_n}
\left (
\sup_{x \in \bar V \cap Q_{i,n}}  | w(x) - w(x_{i,n})   |
\right )
\to 0
\]
for $n \to \infty$. Due to
\eqref{eq:randomeq272g378n}
and the dominated convergence theorem, this yields
\[
\left \langle R_n \mu, w\right \rangle_{C(\bar V)} 
\to 
\left \langle \mu, w\right \rangle_{C(\bar V)}
\qquad \forall w \in C(\bar V)
\qquad \forall \mu \in \MM(\bar V),
\]
i.e., we have $R_n\mu \weaklystar \mu$ in $\MM(\bar V)$
for $n \to \infty$ for all $\mu \in \MM(\bar V)$. 
Note that \eqref{eq:randomeq272g378n},
the identity 
$\|\mu\|_{C(\bar V)^*} = \|\mu\|_{\MM(\bar V)} = |\mu|(\bar V)$
obtained from the theorem of Riesz--Alexandroff
\cite[Theorem 2.4.6]{Attouch2006},
and the definition of the total variation measure $|\mu|(\bar V)$ \cite[Definition 1.4]{Ambrosio2000}
further imply
\begin{align*}
    \|R_n \mu\|_{\MM(\bar V)}
    =
    \sup_{w \in B_1^{C(\bar V)}(0)}
    \left \langle R_n \mu, w\right \rangle_{C(\bar V)}
    \leq
    \sum_{i \in \II_n}   |\mu(\bar V \cap Q_{i,n})  |
    \leq
    |\mu|(\bar V)
     =
    \|\mu\|_{\MM(\bar V)}
\end{align*}
for all $n \in \N$ and $\mu \in \MM(\bar V)$.
As the norm 
$\|\cdot\|_{\MM(\bar V)}$ is weak-star lower semicontinuous
and
\smash{$R_n\mu \weaklystar \mu$} for all 
$\mu \in \MM(\bar V)$, this shows that 
$\|R_n \mu\|_{\MM(\bar V)} \to \|\mu\|_{\MM(\bar V)}$
holds for $n \to \infty$ for all $\mu \in \MM(\bar V)$.
The operators $R_{n} \in \LL(\MM(\bar V), \MM(\bar V))$, $n \in \N$,
thus have all of the desired properties. 
It remains to construct the functions $R_{m,n}$.
To this end, we fix $n \in \N$ and consider for 
the points $x_{i,n} \in \bar V$, $i \in \II_n$,
closed Euclidean 
balls $B_{\varepsilon_m}(x_{i,n,m})$, $m \in \N$,
with centers $x_{i,n,m} \in V$ and radii $\varepsilon_m > 0$
such that:
\begin{enumerate}[label=\roman*)]
\item $\varepsilon_m \to 0$ 
and $x_{i,n,m} \to x_{i,n}$ for all $i \in \II_n$ for $m \to \infty$;
\item the balls $B_{\varepsilon_m}(x_{i,n,m})$
are subsets of $V$ for all $i \in \II_n$, $m \in \N$;
\item for all $m$, the balls 
$\{B_{\varepsilon_m}(x_{i,n,m})\}_{i \in \II_n}$
have positive distance from each other. 
\end{enumerate} 
\pagebreak

It is easy to check that the fact that $\bar V$
is the closure of an open set implies that 
$\varepsilon_m$ and $x_{i,n,m}$ with the above properties can always be found. We now define
\begin{equation}
\label{eq:Rmn_def}
    R_{m,n}\mu := \sum_{i \in \II_n} \mu(\bar V \cap Q_{i,n})  
    \frac{1}{\varepsilon_m^N}
    \varrho \left(\frac{\cdot - x_{i,n,m}}{\varepsilon_m}\right)
    \in C_c^\infty(V)\qquad \forall \mu \in \MM(\bar V)
\end{equation}
 for all $m \in \N$,
where $ \varrho \in C_c^\infty(\R^N)$
denotes a standard mollifier; see
\cite[sect.\ 2.2.2]{Attouch2006}.
From \eqref{eq:Rmn_def}, it follows immediately that
$R_{m,n} \in \LL(\MM(\bar V), C(\bar V))$ for all $m$.
Using the mean value theorem and the fact that 
$\|z\|_{L^1(V)} = \|z\|_{\MM(\bar V)}$ for all $z \in L^1(V)$, 
one further easily checks that 
\[
\left \langle R_{m,n} \mu, w\right \rangle_{C(\bar V)} 
\to 
\left \langle R_n \mu, w\right \rangle_{C(\bar V)}
\quad \forall w \in C(\bar V)
\quad \forall \mu \in \MM(\bar V)\quad \forall n \in \N
\quad \text{ as }m \to \infty
\]
and that 
\[
\|R_{m,n}\mu\|_{\MM(\bar V)} 
=
\|R_{m,n}\mu\|_{L^1(V)} 
=
\sum_{i \in \II_n} |\mu(\bar V \cap Q_{i,n})    |
=
\|R_{n}\mu\|_{\MM(\bar V)}
\]
holds for all $m,n \in \N$ and all $\mu \in \MM(\bar V)$.
This shows that the operators $R_{m,n}$ 
have all of the desired properties 
and completes the proof of the lemma. 
\end{proof}

\begin{remark}
Using a diagonal sequence,
the theorem of Banach--Steinhaus,
and 
the separability of $C(\bar V)$,
one easily checks that 
\cref{lemma:strict_density}
implies the well-known fact that 
$C_c^\infty(V)$ is strictly dense in $\MM(\bar V)$
for open, bounded, nonempty $V \subset \R^N$.
\end{remark}

To extend \eqref{eq:FP_bilateral_mod1}
to measure spaces, we also need 
results on the heat equation with
right-hand sides in $\MM(\bar Q)$.
We therefore introduce:

\begin{definition}[notation for the heat equation]
We denote by:
\begin{enumerate}[label=\roman*)]
\item
$D\colon L^1(W^{2,1}) \cap W^{1,1}(L^1) \to L^1(L^1)$
the differential 
operator 
$Dv := \partial_t v - \Delta v$;
\item $D^{-1}\colon L^2(L^2) \to L^2(H_0^1 \cap H^2)
\cap H^1(L^2)$ the solution map 
$\mu \mapsto w$ of 
\[
\partial_t w - \Delta w = \mu \text{ in } Q,
\quad
w = 0 \text{ on } (0,T) \times \partial\Omega,
\quad
w = 0 \text{ in } \{0\} \times \Omega.
\]
\end{enumerate}
\end{definition}

Note that $D^{-1}$ is 
well defined as a function 
from $L^2(L^2)$ to $L^2(H_0^1 \cap H^2)
\cap H^1(L^2)$ 
by \cite[Proposition 2.3]{ChristofVexler2020},
that $DD^{-1}\mu = \mu$ holds for all 
$\mu \in L^2(L^2)$, and that $D^{-1}$
can be extended to larger domains of definition 
by continuity (e.g., to 
a map 
$D^{-1}\colon L^1(L^1) \to C(L^1)$ by
\cref{lem:dir_diff_uni}). 
These extensions to larger spaces 
are also denoted
by $D^{-1}$
for simplicity. 
For our analysis of \eqref{eq:FP_bilateral_mod1},
we require in particular:

\begin{lemma}[extension of $D^{-1}$
to measure spaces]%
\label{lem:measure_heat}%
Let $V \subset Q$
be nonempty and open,
identify  
$L^2(V)$ with a subset of $L^2(Q) \cong L^2(L^2)$ 
via extension by zero,
consider $D^{-1}$ as a map
$D^{-1}\colon L^2(V) \to L^2(H_0^1\cap H^2) \cap H^1(L^2)$,
and fix $1 < p < 5/4$.
Then $D^{-1}$
admits a unique weak-star-to-weak 
continuous extension 
$D^{-1}\colon \MM(\bar V) \to L^p(W^{1,p})$
and this extension satisfies 
$D^{-1} \in \LL(\MM(\bar V), L^p(W^{1,p}))$
and%
\begin{equation}
\label{eq:extension_characterization}
w = D^{-1}\mu
~~\Leftrightarrow~~
w \in L^1(Q),
~
\int_Q 
w
\left (
-\partial_t z - \Delta z
\right )
\dd(t,x)
=
\left \langle \mu , z\right \rangle_{C(\bar V)}
~\forall z \in \ZZ
\end{equation}
with
$
\ZZ:=
\left \{
z \in L^2(H_0^1 \cap H^2) \cap H^1(L^2)
\mid
\partial_t z + \Delta z \in L^\infty(Q)\text{ and }z(T) = 0 
\right \} \subset C(\bar Q).
$
Further, for every 
$\zeta \in C^\infty(\R^{d+1})$ satisfying 
$\zeta \equiv 0$ in an open neighborhood of 
$\bar V$, one has that the function 
\begin{equation}
\label{eq:D_props}
    \MM(\bar V) \ni \mu \mapsto \zeta D^{-1}\mu \in 
     L^p(W^{1,p}_0 \cap W^{2,p}) \cap W^{1,p}(L^p)
\end{equation}
is well defined, linear, and weak-star-to-strong continuous. 
\end{lemma}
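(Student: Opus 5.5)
The plan is to build the extension by duality, combining a Boccardo--Gallou\"et-type a priori estimate with the strict density result of \cref{lemma:strict_density}. The localization statement \eqref{eq:D_props} will then follow from a cutoff argument plus Aubin--Lions.

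\emph{Step 1: a priori estimate.} First I would show
\[
\|D^{-1} f\|_{L^p(W^{1,p})} \leq C \|f\|_{L^1(Q)}
\qquad \forall f \in L^2(V) \quad (\text{extended by zero}).
\]
I would argue by duality. For $g \in L^{p'}(Q)^{d}$ and $h \in L^{p'}(Q)$ with $p' > 5 = d+2$ (worst case $d=3$), consider the backward problem
\[
-\partial_t z - \Delta z = h - \mathrm{div}\, g, \qquad z(T)=0, \qquad z = 0 \text{ on } (0,T)\times\partial\Omega .
\]
Standard parabolic $L^\infty$/H\"older theory for divergence-form right-hand sides (De Giorgi--Nash--Moser, or the results of Rehberg et al.\ used in \cref{cor:continuity_solutions}) gives $\|z\|_{L^\infty(Q)} \leq C(\|g\|_{L^{p'}} + \|h\|_{L^{p'}})$. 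Testing against $w = D^{-1} f$ yields
\[
\int_Q w h + \nabla w \cdot g \,\dd(t,x) = \int_Q f z \,\dd(t,x) \leq \|f\|_{L^1}\, \|z\|_{L^\infty}.
\]
Taking the supremum over $(g,h)$ gives the estimate. I expect this step to be the main obstacle: the sharp threshold $p<5/4$ corresponds exactly to $p'>d+2$, and I need a citable boundedness result for the dual problem. I would first try the $L^\infty$ estimate for divergence-form parabolic equations in Ladyzhenskaya--Solonnikov--Ural'tseva.

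\emph{Step 2: extension and the characterization \eqref{eq:extension_characterization}.} For $\mu \in \MM(\bar V)$, a diagonal sequence built from the operators $R_{m,n}$ of \cref{lemma:strict_density} gives $f_k \in C_c^\infty(V)$ with $f_k \weaklystar \mu$ and $\|f_k\|_{L^1} \to \|\mu\|_{\MM(\bar V)}$. By Step 1 the sequence $w_k := D^{-1} f_k$ is bounded in the reflexive space $L^p(W^{1,p})$. Integration by parts gives $\int_Q w_k(-\partial_t z - \Delta z) = \int_Q f_k z$ for all $z \in \ZZ$. Since $z \in C(\bar Q)$, this passes to the limit for every weak accumulation point $w$, which therefore satisfies \eqref{eq:extension_characterization}.

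Uniqueness in \eqref{eq:extension_characterization} holds because $\{-\partial_t z - \Delta z \mid z \in \ZZ\} = L^\infty(Q)$: solve the backward heat equation with arbitrary $L^\infty$ data, which gives $z \in C(\bar Q)$ by \cite[Theorem 3.1]{Rehberg2015}. Hence $w$ is determined by $\mu$, the whole sequence converges, and the resulting map is linear with $\|D^{-1}\mu\|_{L^p(W^{1,p})} \leq C\|\mu\|_{\MM(\bar V)}$. The same uniqueness argument gives weak-star-to-weak sequential continuity on bounded sets, and by Banach--Steinhaus this covers all weak-star convergent sequences. The extension is unique because it is determined on the weak-star sequentially dense set $C_c^\infty(V)$.

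\emph{Step 3: localization \eqref{eq:D_props}.} Let $w = D^{-1}\mu$ and $\zeta$ vanish near $\bar V$. Since $\zeta\mu = 0$, we get in the distributional sense
\[
D(\zeta w) = w\,\partial_t\zeta - w\,\Delta\zeta - 2\nabla\zeta\cdot\nabla w \in L^p(L^p),
\]
with zero initial and boundary values. Maximal $L^p$-regularity on the $C^{1,1}$ or convex domain then gives $\zeta w \in L^p(W_0^{1,p}\cap W^{2,p}) \cap W^{1,p}(L^p)$, with a bound in terms of $\|w\|_{L^p(W^{1,p})}$; this yields well-definedness and linearity.

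For weak-star-to-strong continuity I would take a second cutoff $\eta$ with $\eta \equiv 1$ on $\supp \zeta$ and $\eta \equiv 0$ near $\bar V$. The same computation shows that $\eta w_k$ is bounded in $L^p(W^{2,p}) \cap W^{1,p}(L^p)$ whenever $\mu_k \weaklystar \mu$. By Aubin--Lions, $\eta w_k \to \eta w$ strongly in $L^p(W^{1,p})$, so the right-hand side $D(\zeta w_k)$ converges strongly in $L^p(L^p)$. Maximal regularity then gives $\zeta w_k \to \zeta w$ strongly in the target space.
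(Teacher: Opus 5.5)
Your proposal is correct and is essentially the paper's argument. The paper obtains existence, the bound $D^{-1}\in\LL(\MM(\bar V),L^p(W^{1,p}))$, and the characterization \eqref{eq:extension_characterization} by following the duality-plus-strict-approximation proof of \cite[Theorem 2.2]{Casas2016}, with \cref{lemma:strict_density} supplying the approximants; this is precisely your Steps 1--2 with $p<5/4$, i.e., $p'>5\geq d+2$. It proves \eqref{eq:D_props} via the same cutoff identity $\zeta D^{-1}\mu = D^{-1}[(\partial_t\zeta-\Delta\zeta)D^{-1}\mu - 2\nabla\zeta\cdot\nabla D^{-1}\mu]$, maximal $L^p$-regularity from \cite{Wood2007}, and Aubin--Lions on regions bounded away from $\bar V$, using cutoffs equal to one there just like your $\eta$.

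The one step to tighten is your distributional derivation of that identity with ``zero initial and boundary values'' for $\zeta w$, since $w$ is only known to lie in $L^p(W^{1,p})$. As in the paper, it is cleanest to write the identity for the smooth approximants $f_k$ (where $\zeta f_k=0$) and then pass to the limit using the weak-star-to-weak continuity of $D^{-1}$.
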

\pagebreak

\begin{proof}
That $D^{-1}$ can have at most one 
weak-star-to-weak continuous extension 
$D^{-1}\colon \MM(\bar V) \to L^p(W^{1,p})$
follows from the weak-star density of 
$C_c^\infty(V)$ in $\MM(\bar V)$.
To prove that such an extension exists,
is in $\LL(\MM(\bar V), L^p(W^{1,p}))$,
and is characterized by  
\eqref{eq:extension_characterization}, 
one can follow word for word
the proof of \cite[Theorem 2.2]{Casas2016}.
(Note that, for the strict approximation argument
in \cite{Casas2016}, one can also use our 
\cref{lemma:strict_density}, and 
that $\ZZ \subset C(\bar Q)$ follows
from \cref{lem_deg_obstacle_probs}
with $\chi_\psi \equiv -\infty$ and $\chi^\phi \equiv \infty$.)
It remains to prove the assertions on the map 
\eqref{eq:D_props}. So let us assume that 
$\zeta \in C^\infty(\R^{d+1})$ 
is an arbitrary but fixed function satisfying 
$\zeta \equiv 0$ in an open neighborhood of 
$\bar V$. 
From \cite[Theorem~6.2]{Wood2007}, we get that 
$D^{-1}$ is also
well defined, linear, and continuous as a map
$D^{-1}\colon L^{p}(L^{p}) \to 
L^p(W_0^{1,p}\cap W^{2, p}) \cap W^{1,p}(L^p)$.
(Note that, if $\Omega$ is of class $C^{1,1}$,
then the outer ball condition
needed in \cite[Theorem 6.2]{Wood2007}
follows from \cite[Corollary~2]{Lewicka2020}.)
Using the product rule, the 
weak-star-to-weak continuity of $D^{-1}$,
the weak-star density of 
$C_c^\infty(V)$ in $\MM(\bar V)$,
and the properties
of $\zeta$, one further easily checks that,
for every $\mu \in \MM(\bar V)$,
one has $(\partial_t\zeta - \Delta \zeta)D^{-1}\mu - 2 \nabla \zeta \cdot \nabla D^{-1}\mu \in L^p(L^p)$ and 
\begin{equation}
\label{eq:reg_id}
 \zeta D^{-1}\mu = D^{-1}\left [(\partial_t\zeta - \Delta \zeta)D^{-1}\mu - 2 \nabla \zeta \cdot \nabla D^{-1}\mu \right ].
\end{equation}
Since 
$D^{-1}$ is well defined 
as an element of 
$\LL(\MM(\bar V), L^p(W^{1,p}))$
and
as an element of 
$\LL( L^{p}(L^{p}),
L^p(W_0^{1,p}\cap W^{2, p}) \cap W^{1,p}(L^p))$,
\eqref{eq:reg_id} already implies that 
\eqref{eq:D_props} is well defined, linear, and continuous.
It remains to prove the weak-star-to-strong continuity
of \eqref{eq:D_props}.
Let
$\{\mu_n \} \subset \MM(\bar V)$ 
be a sequence satisfying 
\smash{$\mu_n \weaklystar \mu$ in $\MM(\bar V)$}. 
If $Q \subset\bar V$, then \eqref{eq:D_props} is 
identical zero 
and there is nothing to prove,
so it suffices to consider the case
$Q\setminus \bar V \neq \emptyset$ in the following. 
Assume that $O \subset Q$  
is an arbitrary nonempty open set satisfying 
$\bar O \cap \bar V = \emptyset$
and, for the moment, that 
$\zeta \in C^\infty(\R^{d+1})$ 
not only satisfies 
$\zeta \equiv 0$ in an open neighborhood of $\bar V$
but also $\zeta \equiv 1$ on $\bar O$. 
Then the boundedness of 
$\{\mu_n\}$ in $\MM(\bar V)$,
the weak-star-to-weak continuity of the map
$D^{-1}\colon \MM(\bar V) \to L^p(W^{1,p})$,
and the linearity and continuity 
of  \eqref{eq:D_props} imply 
$\zeta D^{-1}\mu_n \weakly \zeta D^{-1}\mu$
in 
$L^p(W_0^{1,p}\cap W^{2, p})\cap W^{1,p}(L^p)$.
Due to the 
lemma of Aubin--Lions \cite[Lemma 7.7]{Roubicek2013},
the latter yields $\zeta D^{-1}\mu_n \to \zeta D^{-1}\mu$
strongly in $L^p(W_0^{1,p})$. 
Since $\zeta \equiv 1$ on $\bar O$ and 
as $O$ was arbitrary, this 
allows us to deduce that 
$(D^{-1}\mu_n)|_{O} \to (D^{-1}\mu)|_{O}$
in $L^p(O)$
and
$\nabla (D^{-1}\mu_n)|_{O} \to \nabla (D^{-1}\mu)|_{O}$
in $L^p(O)^{d}$
for all nonempty open $O \subset Q$ with 
$\bar O \cap \bar V = \emptyset$. 
Let us now again suppose that 
$\zeta \in C^\infty(\R^{d+1})$ 
is arbitrary with 
$\zeta \equiv 0$ in an open neighborhood of 
$\bar V$. Then  we know that 
$\partial_t \zeta$, $\nabla \zeta$, and $\Delta \zeta$
also vanish in this neighborhood
and we readily obtain from the 
strong $L^p$-convergence of 
$D^{-1}\mu_n$ 
and
$\nabla D^{-1}\mu_n$
away from $\bar V$ 
and $D^{-1} \in \LL(L^{p}(L^{p}),
 L^p(W_0^{1,p}\cap W^{2, p})
\cap W^{1,p}(L^p))$
that
\begin{equation*}
\begin{aligned}
 \zeta D^{-1}\mu_n 
 &= 
 D^{-1}\left [(\partial_t\zeta - \Delta \zeta)D^{-1}\mu_n - 2 \nabla \zeta \cdot \nabla D^{-1}\mu_n \right ]
 \\
 &\to 
 D^{-1}\left [(\partial_t\zeta - \Delta \zeta)D^{-1}\mu - 2 \nabla \zeta \cdot \nabla D^{-1}\mu \right ]
 = \zeta D^{-1}\mu
 \end{aligned}
\end{equation*}
in 
$ L^p(W_0^{1,p}\cap W^{2, p}) \cap W^{1,p}(L^p)$
for $n \to \infty$; see \eqref{eq:reg_id}.
This proves the 
weak-star-to-strong continuity 
of the function \eqref{eq:D_props}
and completes the proof of the lemma. 
\end{proof}

Using the operators $D$ and $D^{-1}$, 
we can prove the following 
lemma. 
 
\begin{lemma}[second modification of solution and multiplier maps]
\label{lem:mult_map_D}%
Consider the situation in 
Assumption 
\ref{ass:localization_setting}. 
Then, for all 
$u, \lambda^\phi,\lambda_\psi \in  L^\infty(L^2)$,
we have
\begin{equation*}
\begin{aligned}
M^{\hat \phi}(u - \lambda_\psi) 
&= M^{\hat \phi}(u - D[\zeta_\psi D^{-1}\lambda_\psi])
&&\text{ a.e.\ in } Q,
\\
M_{\hat \psi}(u - \lambda^\phi)
&= M_{\hat \psi}(u - 
D[\zeta^\phi D^{-1}\lambda^\phi]) 
&&\text{ a.e.\ in } Q,
\\
S^{\hat \phi}(u - \lambda_\psi) 
&= S^{\hat \phi}(u - D[\zeta_\psi D^{-1}\lambda_\psi])
&&\text{ in } \bar Q \setminus W_\psi,
\\
S_{\hat \psi}(u - \lambda^\phi)
&= S_{\hat \psi}(u - 
D[\zeta^\phi D^{-1}\lambda^\phi]) 
&&\text{ in } \bar Q \setminus  W^\phi.
\end{aligned}
\end{equation*}
\end{lemma}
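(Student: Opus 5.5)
The plan is to reduce everything to an elementary ``shift'' argument for the unilateral problems, using the complementarity characterization in \cref{lem_deg_obstacle_probs}\ref{lem_deg_obstacle_probs:ii}. We treat only the pair of identities involving $M^{\hat\phi}$ and $S^{\hat\phi}$; the other pair follows symmetrically by swapping the roles of $\psi$ and $\phi$.

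Fix $u,\lambda_\psi\in L^\infty(L^2)$ and set $w:=D^{-1}\lambda_\psi\in L^2(H_0^1\cap H^2)\cap H^1(L^2)$ and $z:=(1-\zeta_\psi)w$. Then $\lambda_\psi = D[\zeta_\psi w] + Dz$.

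\textbf{Properties of $z$.} By Assumption \ref{ass:localization_setting}, $1-\zeta_\psi$ vanishes outside $W_\psi$, and $\bar W_\psi\subset(0,\infty)\times\Omega$. Hence $z$ has compact support in $(0,T]\times\Omega$ (restricted to $\bar Q$), which gives $z(0)=0$ and $z=0$ on $(0,T)\times\partial\Omega$. We also have $z\in L^2(H_0^1\cap H^2)\cap H^1(L^2)$. Moreover, $\dist(W_\psi,W^\phi)>0$ and $V^\phi\subset W^\phi$ give $z=0$ on $V^\phi$, and by definition $z=0$ on $\bar Q\setminus W_\psi$.

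\textbf{Integrability of the shifted argument.} We check that $u-D[\zeta_\psi w]\in L^\infty(L^2)$, so that \cref{lem_deg_obstacle_probs} applies directly. By the product rule,
\[
D[\zeta_\psi w]=\zeta_\psi\lambda_\psi+(\partial_t\zeta_\psi-\Delta\zeta_\psi)w-2\nabla\zeta_\psi\cdot\nabla w .
\]
The embedding $L^2(H_0^1\cap H^2)\cap H^1(L^2)\hookrightarrow C(H_0^1)$ then puts the last two terms in $L^\infty(L^2)$. Should this embedding be unavailable in the stated generality, one can instead work with the $L^1(L^1)$-extensions from \cref{lem:dir_diff_uni} together with density.

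\textbf{Core step.} Let $y:=S^{\hat\phi}(u-D[\zeta_\psi w])$ with multiplier $\lambda:=M^{\hat\phi}(u-D[\zeta_\psi w])$. We claim that $(y-z,\lambda)$ satisfies the complementarity system \eqref{eq:bilateral_obst_stat_sys_deg} (with $V_\psi=\emptyset$) for the right-hand side $u-\lambda_\psi$. The checks are as follows.
\begin{itemize}
\item Regularity and the initial condition: $y-z$ has the required regularity and $(y-z)(0)=y_0$.
\item State equation: $D(y-z)=u-D[\zeta_\psi w]-\lambda-Dz=u-\lambda_\psi-\lambda$.
\item Complementarity: the conditions $\lambda\ge0$, $y-z\le\hat\phi$ and $\lambda(y-z-\hat\phi)=0$ only need to hold on $V^\phi\cap Q$, where $z=0$. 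There they coincide with the conditions already satisfied by $(y,\lambda)$.
\item Vanishing outside $V^\phi$: the condition $\lambda=0$ on $Q\setminus V^\phi$ is unchanged.
\end{itemize}
By the uniqueness in \cref{lem_deg_obstacle_probs}, this yields $S^{\hat\phi}(u-\lambda_\psi)=y-z$ and $M^{\hat\phi}(u-\lambda_\psi)=\lambda$. The first of these gives equality of the solutions on $\bar Q\setminus W_\psi$, since $z=0$ there (continuous representatives). The second gives equality of the multipliers a.e.\ in $Q$.

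The only point requiring care is the bookkeeping of regularity and supports in the previous paragraphs. The key structural fact is that $z$ vanishes wherever $\hat\phi$ is finite, so subtracting $z$ leaves the obstacle unchanged: $\hat\phi-z=\hat\phi$ pointwise.
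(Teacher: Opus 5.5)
Your proposal is correct and is essentially the paper's argument. The paper treats the $\hat\psi$ pair and adds the shift $(1-\zeta^\phi)D^{-1}\lambda^\phi$ to $S_{\hat\psi}(u-\lambda^\phi)$, whereas you subtract $(1-\zeta_\psi)D^{-1}\lambda_\psi$ from the solution for the modified argument; in both cases one then checks the complementarity system of \cref{lem_deg_obstacle_probs} using that the shift vanishes wherever the remaining obstacle is finite, and concludes by uniqueness. The embedding into $L^\infty(H_0^1)$ that you invoke is available (the paper cites \cite[Proposition 2.3]{ChristofVexler2020}), so your $L^1(L^1)$ fallback is not needed.
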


\begin{proof}
Assume that $\lambda^\phi, u \in L^\infty(L^2)$
are given. Then 
\cref{lem_deg_obstacle_probs} implies that 
$y:= S_{\hat \psi}(u - \lambda^\phi) \in L^2(H_0^1 \cap H^2) \cap H^1(L^2) \cap C(\bar Q)$ satisfies
\begin{gather*}
\partial_t y  - \Delta y  = 
u   - \lambda^\phi - M_{\hat \psi}(u - \lambda^\phi)
\text{ a.e.\ in } 
Q,
\qquad 
M_{\hat \psi}(u - \lambda^\phi) = 0 \text{ a.e.\ in } 
Q \setminus V_\psi,
\\
M_{\hat \psi}(u - \lambda^\phi) \leq 0,
\qquad  
y - \hat \psi \geq 0,
\qquad
M_{\hat \psi}(u - \lambda^\phi) (y - \hat \psi) = 0 \text{ a.e.\ on } Q \cap V_\psi.
\end{gather*}
Define
$\tilde y := y + (1-\zeta^\phi)D^{-1}\lambda^\phi \in L^2(H_0^1 \cap H^2) \cap H^1(L^2) \cap C(\bar Q)$.
Then it holds 
\begin{align*}
\partial_t \tilde y  - \Delta \tilde y  
&= 
u   - \lambda^\phi - M_{\hat \psi}(u - \lambda^\phi)
+  D[(1-\zeta^\phi)D^{-1}\lambda^\phi]
\\
&=
u   -  D[\zeta^\phi D^{-1}\lambda^\phi] - M_{\hat \psi}(u - \lambda^\phi),
\end{align*}
we have $\tilde y(0) = y(0) = y_0$,
and it follows from the properties of $\zeta^\phi$ that 
\[
	\tilde y = y \text{ in } \bar Q \setminus W^\phi 
    \supset Q \cap W_\psi \supset Q \cap V_\psi,
\]
and, thus,
\[
M_{\hat \psi}(u - \lambda^\phi) \leq 0,
\qquad  
\tilde y - \hat \psi \geq 0,
\qquad
M_{\hat \psi}(u - \lambda^\phi) (\tilde y - \hat \psi) = 0 \text{ a.e.\ on } Q \cap V_\psi.
\]
Since $D^{-1}\lambda^\phi$ is in $L^\infty(H_0^1)$ 
by \cite[Proposition 2.3]{ChristofVexler2020} and
since $\zeta^\phi \in C^\infty(\R^{d+1})$, 
we also have 
$u - D[\zeta^\phi D^{-1}\lambda^\phi] 
=u - (D\zeta^\phi)(D^{-1}\lambda^\phi ) - \zeta^\phi\lambda^\phi
+ 2 \nabla \zeta^\phi \cdot \nabla D^{-1}\lambda^\phi 
\in L^\infty(L^2)$.
Combined, all of this allows us to 
invoke \cref{lem_deg_obstacle_probs}
again 
and to obtain that 
\[
\tilde y = S_{\hat \psi}(u   -  D[\zeta^\phi D^{-1}\lambda^\phi])
\quad\text{and}\quad
M_{\hat \psi}(u - \lambda^\phi) = M_{\hat \psi}(u   -  D[\zeta^\phi D^{-1}\lambda^\phi]).
\]
The assertions 
for $S_{\hat \psi}$ and $M_{\hat \psi}$ follow immediately
from the above identities. 
For the maps 
\smash{$S^{\hat \phi}$ and 
$M^{\hat \phi}$}, one can use the exact same arguments. 
\end{proof}

We now have all tools at hand 
to apply \cref{th:abstract_dir_diff} to \eqref{eq:bilateral_obst_intro}
and to prove the directional differentiability of 
the solution operator $S_{\psi}^\phi$.

\begin{proposition}[{application of the
abstract theory to \eqref{eq:bilateral_obst_intro}}]%
\label{prop:dir_diff}%
The function \smash{$S_\psi^\phi$} is directionally 
differentiable at $\bar u$ in the sense that, 
for all $h \in L^\infty(L^2)$, 
there exists $(S_\psi^\phi)'(\bar u;h)(T) \in L^1(\Omega)$
satisfying 
\[
\frac{S_\psi^\phi(\bar u + \tau h)(T) - S_\psi^\phi(\bar u)(T)}{\tau}
\to (S_\psi^\phi)'(\bar u;h)(T)
\text{ in } L^1(\Omega)
\text{ for } 0 < \tau \to 0.
\]
\end{proposition}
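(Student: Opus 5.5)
We will apply \cref{th:abstract_dir_diff}, with the fixed-point equation obtained from \eqref{eq:FP_bilateral_mod1} after the modification in \cref{lem:mult_map_D}. Take $U := L^\infty(L^2)$ with the given $\bar u$ and the radius $r$ from \cref{lem_disjoint_contact_sets}, and set $Y := L^1(\Omega)$. For the multiplier space we choose
\[
\Lambda := \MM(\bar V^\phi) \times \MM(\bar V_\psi) \cong \bigl(C(\bar V^\phi)\times C(\bar V_\psi)\bigr)^*,
\]
equipped with the sum norm. This is the dual of a separable space. By \cref{lem:first_mod}, $\lambda^\phi(u)$ is supported in $Z^\phi \subset \bar V^\phi$ and $\lambda_\psi(u)$ in $Z_\psi \subset \bar V_\psi$, so $M(u) := (\lambda^\phi(u), \lambda_\psi(u))$ lies in $\Lambda$. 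We then define, for $(\mu^\phi, \mu_\psi) \in \Lambda$,
\[
F((\mu^\phi,\mu_\psi),u) := \Bigl( M^{\hat\phi}\bigl(u - D[\zeta_\psi D^{-1}\mu_\psi]\bigr)\big|_{\bar V^\phi},\; M_{\hat\psi}\bigl(u - D[\zeta^\phi D^{-1}\mu^\phi]\bigr)\big|_{\bar V_\psi} \Bigr),
\qquad
G := S_{\hat\psi}\bigl(u - D[\zeta^\phi D^{-1}\mu^\phi]\bigr)(T).
\]
\Cref{lem:measure_heat} ensures that $D[\zeta D^{-1}\mu] \in L^p(L^p) \subset L^1(L^1)$ is well defined for measures. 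Using \cref{lem:mult_map_D} together with the supports of $M^{\hat\phi}$ and $M_{\hat\psi}$ (contained in $V^\phi$ and $V_\psi$ by \cref{lem_deg_obstacle_probs}), \eqref{eq.M_FP} holds. Moreover, $G(M(u),u) = S_\psi^\phi(u)(T)$ on $\Omega\setminus W^\phi(T)$, and the analogous formula with $S^{\hat\phi}$ holds on $\Omega\setminus W_\psi(T)$. Since the closures of $W_\psi$ and $W^\phi$ are disjoint, the two formulas together recover $S_\psi^\phi(u)(T)$ on all of $\Omega$. To cover all of $\Omega$ within a single $G$, I would take $Y := L^1(\Omega)^2$ and let $G$ consist of both terminal values restricted to the complements of $W^\phi(T)$ and $W_\psi(T)$.

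Assumption \ref{ass:standing:abstract}\ref{ass:standing:abstract:iv}a) follows from \cref{lem:mult_contractive}, which gives $L^1$-Lipschitz continuity of the multipliers in $u$ and hence Lipschitz continuity in $L^\infty(L^2)$. The main work is the non-expansiveness b) in the measure norm. For $\mu \in L^\infty$ (or smooth densities), \eqref{eq:M_Lip_1} gives
\[
\|F_1(\mu_1)-F_1(\mu_2)\| + \|S^{\hat\phi}(\cdot)(T)-S^{\hat\phi}(\cdot)(T)\|_{L^1} \le \|D[\zeta_\psi D^{-1}(\mu_{\psi,1}-\mu_{\psi,2})]\|_{L^1(L^1)}.
\]
This bound is not $\le \|\mu_{\psi,1}-\mu_{\psi,2}\|$. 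Instead, I would use the trick in the proof of \cref{lem:mult_map_D}: shifting the state by $(1-\zeta)D^{-1}\mu$ shows that $M_{\hat\psi}(u - D[\zeta^\phi D^{-1}\mu]) = M_{\hat\psi}(u-\mu)$ for all $\mu$ supported in $\bar V^\phi$. The same identity holds for the state on the set where $\zeta^\phi=1$. Applying \eqref{eq:M_Lip_1} to $u-\mu$ then yields the bound $\|\mu_1-\mu_2\|_{L^1}$. Summing the two components, which involve disjoint variables, gives \eqref{eq:Lipschitz_estimate_abstract} for $L^\infty$ densities. For general measures we pass to the limit via the strictly convergent approximations $R_{m,n}$ of \cref{lemma:strict_density}: the weak-star-to-strong continuity in \eqref{eq:D_props} makes $F$ and $G$ continuous along these approximations, the norms converge, and lower semicontinuity of the norms closes the estimate. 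I expect this extension to measures, keeping the constant exactly $1$, to be the main obstacle.

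For c), take $g_n \weaklystar g$ in $\Lambda$. By \eqref{eq:D_props}, $D[\zeta D^{-1}g_n] \to D[\zeta D^{-1}g]$ strongly in $L^p(L^p) \hookrightarrow L^1(L^1)$. This is exactly why $\zeta$ was inserted: it upgrades weak-star convergence to the strong convergence required by \cref{lem:dir_diff_uni}. The arguments therefore converge as $u' + \tau_n h_n$ with $h_n$ convergent in $L^1(L^1)$. The Hadamard differentiability \eqref{eq:had_probs_unilatera} then gives weak-star convergence in $\MM$ of the $F$-quotients after restriction to $\bar V$; this restriction is continuous because the measures on $(0,T]\times\Omega$ are supported in the compact sets $\bar V$, so test functions can be cut off. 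It also gives strong $L^1$ convergence of the $G$-quotients. Finally, \cref{th:abstract_dir_diff} yields convergence of the difference quotients of $G(M(u),u)$, and hence of $S_\psi^\phi(\bar u+\tau h)(T)$, in $L^1(\Omega)$.
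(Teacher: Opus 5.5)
Your proposal is correct and follows the paper's proof essentially step by step. It uses the same $F$, $G$ and two-component $Y$ (terminal values off $W_\psi$ and off $W^\phi$), and proves non-expansiveness in the same way: \cref{lem:mult_map_D} plus \eqref{eq:M_Lip} for smooth densities, extended to measures by the linear strict approximations $R_{m,n}$. Assumption~\ref{ass:standing:abstract}\ref{ass:standing:abstract:iv}c) is handled, as in the paper, via the weak-star-to-strong continuity of \eqref{eq:D_props}.

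The one correction is that $\Lambda$ should be built on $\overline{Q\cap V^\phi}$ and $\overline{Q\cap V_\psi}$, not on $\bar V^\phi$ and $\bar V_\psi$. The latter can extend beyond $t=T$, and there $D^{-1}$ from \cref{lem:measure_heat}, which requires $V\subset Q$, is not defined.
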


\begin{proof}
Consider the situation in 
Assumption
\ref{ass:localization_setting}
and  define
\begin{gather*}
\Lambda := \MM(\overline{Q \cap V^\phi}) \times \MM(\overline{Q \cap V_\psi}),
\qquad
\|(\lambda^\phi, \lambda_\psi)\|_\Lambda
:=
\|\lambda^\phi\|_{\MM(\overline{Q \cap V^\phi})}
+
\|\lambda_\psi\|_{\MM(\overline{Q \cap V_\psi})},
\\
\Omega^\phi
:= \{x \in \Omega \mid (T,x) \not \in \bar W_\psi \}
\neq \emptyset,
\qquad
\Omega_\psi
:= \{x \in \Omega \mid (T,x) \not \in \bar W^\phi \} \neq \emptyset,
\\
Y := 
L^1(\Omega^\phi) 
\times 
L^1(\Omega_\psi),
\qquad
\|(y^\phi, y_\psi)\|_Y
:=
\|y^\phi\|_{L^1(\Omega^\phi)}
+
\|y_\psi\|_{L^1(\Omega_\psi)},
\\
U := L^\infty(L^2),
\qquad \|u\|_U := \|u\|_{L^\infty(L^2)},
\\
F\colon \Lambda \times B_r^U(\bar u)
\to \Lambda,
\qquad
F(\lambda^\phi, \lambda_\psi, u)
:=
\begin{pmatrix}
M^{\hat \phi}(u - D[\zeta_\psi D^{-1}\lambda_\psi])
\\
M_{\hat \psi}(u - D[\zeta^\phi D^{-1}\lambda^\phi])
\end{pmatrix},
\\
M\colon  B_r^U(\bar u) \to \Lambda,
\qquad
M(u):=
\begin{pmatrix}
\max(0, M_{\psi}^\phi(u))
\\
\min(0, M_{\psi}^\phi(u))
\end{pmatrix},
\\
G\colon \Lambda \times B_r^U(\bar u)
\to Y,
\qquad
G(\lambda^\phi, \lambda_\psi, u)
:=
\begin{pmatrix}
S^{\hat \phi}(u - D[\zeta_\psi D^{-1}\lambda_\psi])(T)|_{\Omega^\phi}
\\
S_{\hat \psi}(u - D[\zeta^\phi D^{-1}\lambda^\phi])(T)|_{\Omega_\psi}
\end{pmatrix}.
\end{gather*}

Let us check that the above functions and spaces 
satisfy all of the conditions in 
Assumption \ref{ass:standing:abstract}: 
Due to the Riesz--Alexandroff 
theorem \cite[Theorem~2.4.6]{Attouch2006}, 
$\Lambda$ is the topological dual  
of the separable space 
$C(\overline{Q \cap V^\phi}) \times C(\overline{Q \cap V_\psi})$
endowed with the norm 
$\|(v^\phi, v_\psi)\|_{C(\overline{Q \cap V^\phi}) \times C(\overline{Q \cap V_\psi})}
:=
\max(\|v^\phi\|_{C(\overline{Q \cap V^\phi})}, \|v_\psi\|_{C(\overline{Q \cap V_\psi})}) $.
Further, $Y$ and $U$ are trivially normed spaces. 
Thus, $\Lambda$, $Y$, and $U$ are as required. 
Next, we verify that 
$F$, $M$, and $G$ are well defined. 
From \cref{cor:continuity_solutions} and
\cref{def:sol_ops}, we obtain that 
$M$ is well defined as a mapping 
$M\colon  B_r^U(\bar u) \to [L^\infty(L^2)]^2$. 
From \cref{lem_disjoint_contact_sets},
\eqref{eq:bilateral_obst_stat_sys}, 
and the properties of $V^\phi$ and $V_\psi$,
it follows
further 
that 
$\max(0, M_{\psi}^\phi(u)) = 0$
holds a.e.\ in $Q \setminus V^\phi$
and that 
\smash{$\min(0, M_{\psi}^\phi(u)) = 0$}
holds a.e.\ in $Q \setminus V_\psi$
for all $u \in B_r^U(\bar u)$.
This allows us to interpret
$M$ as a function
$M\colon  B_r^U(\bar u) \to \Lambda$
as desired. Next, we consider $F$ and $G$. 
From \cref{lem:dir_diff_uni},
we obtain that \smash{$M^{\hat \phi}$, $M_{\hat \psi}$,
$S^{\hat \phi}$, and $S_{\hat \psi}$}
are well defined as maps 
from $L^1(L^1)$ to $L^1(L^1)$,
respectively, $L^1(L^1)$ to $C(L^1)$.
From \cref{lem:measure_heat},
the properties of $\zeta^\phi$ and $\zeta_\psi$,
and the definition of $D$, we further get
\begin{gather*}
u - D[\zeta^\phi D^{-1}\lambda^\phi],
u - D[\zeta_\psi D^{-1}\lambda_\psi]
\in L^{p}(L^p) \subset L^1(L^1)
\\
\forall u \in L^{\infty}(L^2)
\qquad
\forall \lambda^\phi \in 
\MM(\overline{Q \cap V^\phi})
\qquad
\forall \lambda_\psi \in \MM(\overline{Q \cap V_\psi})
\qquad \forall 1 < p < \frac{5}{4}.
\end{gather*}
As 
\eqref{eq:bilateral_obst_stat_sys_deg} 
and \cref{lem:dir_diff_uni} imply that 
$M^{\hat \phi}(v)$ and $M_{\hat \psi}(v)$
vanish a.e.\ in $Q \setminus V^\phi$
and $Q \setminus V_\psi$, respectively, 
for all $v \in L^1(L^1)$,
this shows that the functions 
$F$ and $G$ are indeed well defined as 
maps
$F\colon \Lambda \times B_r^U(\bar u)
\to \Lambda$
and
$G\colon \Lambda \times B_r^U(\bar u) \to Y$.
Next, we check the mapping properties 
of $M$, $F$, and $G$. 
That $M$ satisfies the Lipschitz 
condition \eqref{eq:LipM} follows immediately 
from \cref{lem:mult_contractive}. 
From \cref{lem:first_mod}, 
the fact that $M$ maps 
$B_r^U(\bar u)$ to $[L^\infty(L^2)]^2$,
and \cref{lem:mult_map_D}, we further immediately get that 
$M(u) = F(M(u),u)$ holds for all $u \in B_r^U(\bar u)$
(i.e., \eqref{eq.M_FP}).
Let us next verify the Lipschitz estimate for $F$ and $G$
in Assumption \ref{ass:standing:abstract}\ref{ass:standing:abstract:iv}b).
Suppose that 
$u \in B_r^U(\bar u)$ and 
$\lambda_\psi, \mu_\psi \in \MM(\overline{Q \cap V_\psi})$
are given and that 
$R_{m,n}\colon 
\MM(\overline{Q \cap V_\psi}) \to C_c^\infty( Q \cap V_\psi)$
is as in \cref{lemma:strict_density} for $V := Q \cap V_\psi$.
Then we get from \cref{lem:mult_map_D,lem_deg_obstacle_probs}
that 
\begin{equation*}
\begin{aligned}
& \|M^{\hat \phi}(u - D[\zeta_\psi D^{-1}R_{m,n}\lambda_\psi])
-
M^{\hat \phi}(u - D[\zeta_\psi D^{-1}R_{m,n}\mu_\psi])\|_{\MM(\overline{Q \cap V^\phi})}
\\
&=
 \|M^{\hat \phi}(u -  R_{m,n}\lambda_\psi)
-
M^{\hat \phi}(u -  R_{m,n}\mu_\psi) \|_{\MM(\overline{Q \cap V^\phi})}
\\
&=
 \|M^{\hat \phi}(u -  R_{m,n}\lambda_\psi)
-
M^{\hat \phi}(u -  R_{m,n}\mu_\psi) \|_{L^1(L^1)}
\\
&\leq
\| R_{m,n}\lambda_\psi 
- R_{m,n}\mu_\psi \|_{L^1(L^1)}
-
\| 
S^{\hat \phi}(u -  R_{m,n}\lambda_\psi)(T)
-
S^{\hat \phi}(u -  R_{m,n}\mu_\psi)(T)
\|_{L^1(\Omega)}
\\
&\leq
 \| R_{m,n}(\lambda_\psi 
- \mu_\psi)\|_{L^1(Q \cap V_\psi)}
-
\| 
S^{\hat \phi}(u -  R_{m,n}\lambda_\psi)(T)
-
S^{\hat \phi}(u - R_{m,n}\mu_\psi)(T)
\|_{L^1(\Omega^\phi)}
\\
&=
  \| R_{m,n}(\lambda_\psi 
- \mu_\psi)  \|_{\MM(\overline{Q \cap V_\psi})}
\\
&\qquad 
-
\| 
S^{\hat \phi}(u -  D[\zeta_\psi D^{-1}R_{m,n}\lambda_\psi])(T)
-
S^{\hat \phi}(u -  D[\zeta_\psi D^{-1}R_{m,n}\mu_\psi])(T)
 \|_{L^1(\Omega^\phi)}.
\end{aligned}
\end{equation*}
Due to the convergence properties 
in \cref{lemma:strict_density},
the continuity 
of the 
functions 
$M^{\hat \phi}\colon L^1(L^1) \to L^1(L^1)$
and 
$S^{\hat \phi}\colon L^1(L^1) \to C(L^1)$
from \cref{lem:dir_diff_uni},
the weak-star-to-strong continuity of maps 
of the type \eqref{eq:D_props},
and the definition of $D$,
we can let 
$m$ and then $n$ go to infinity in 
the above to obtain that 
\begin{align*}
&\|M^{\hat \phi}(u - D[\zeta_\psi D^{-1}\lambda_\psi])
-
M^{\hat \phi}(u - D[\zeta_\psi D^{-1}\mu_\psi])\|_{\MM(\overline{Q \cap V^\phi})}
\\
&\leq 
\| \lambda_\psi - \mu_\psi \|_{\MM(\overline{Q \cap V_\psi})}
\\
&\qquad - 
\| 
S^{\hat \phi}(u -  D[\zeta_\psi D^{-1} \lambda_\psi])(T)
-
S^{\hat \phi}(u -  D[\zeta_\psi D^{-1} \mu_\psi])(T)
 \|_{L^1(\Omega^\phi)}.
\end{align*}
Completely analogously, we also get that 
\begin{align*}
&\|M_{\hat \psi}(u - D[\zeta^\phi D^{-1}\lambda^\phi])
-
M_{\hat \psi}(u - D[\zeta^\phi D^{-1}\mu^\phi])\|_{\MM(\overline{Q \cap V_\psi})}
\\
&\leq 
 \| \lambda^\phi 
- \mu^\phi  \|_{\MM(\overline{Q \cap V^\phi})}
\\
&\qquad-
\| 
S_{\hat \psi}(u -  D[\zeta^\phi D^{-1}\lambda^\phi])(T)
-
S_{\hat \psi}(u -  D[\zeta^\phi D^{-1}\mu^\phi])(T)
\|_{L^1(\Omega_\psi)}
\end{align*}
holds for all $\lambda^\phi,  \mu^\phi \in \MM(\overline{Q \cap V^\phi})$.
Adding the last two estimates and 
comparing with the definitions of $F$ and $G$ 
shows that we have 
\begin{align*}
 \|
F(\lambda,u) - F(\mu,u)
 \|_\Lambda
+
  \|
G(\lambda,u) - G(\mu,u)
  \|_Y
\leq
 \|
\lambda  - \mu
 \|_\Lambda
\quad \forall \lambda, \mu \in \Lambda
\quad \forall u \in B_r^U(\bar u),
\end{align*}
as needed in Assumption \ref{ass:standing:abstract}\ref{ass:standing:abstract:iv}b). 
It remains to check the differentiability 
properties in Assumption \ref{ass:standing:abstract}\ref{ass:standing:abstract:iv}c).
To this end, 
we define $(\bar \lambda^\phi, \bar \lambda_\psi) := M(\bar u)$ and 
suppose that 
$g = (g^\phi, g_\psi) \in \Lambda$,
$h \in L^\infty(L^2)$, 
$\{g_n\} = \{(g_n^\phi, g_{n,\psi})\} \subset \Lambda$, and 
$\{\tau_n\} \subset (0, \infty)$
with $g_n \weaklystar g$ in $\Lambda$ and $\tau_n \to 0$
are given. 
Due to the definition of $F$, we have 
\begin{align*}
&\frac{F(M(\bar u) + \tau_n g_n, \bar u + \tau_n h) - F(M(\bar u),\bar u)}{\tau_n}
\\
&=
\frac{1}{\tau_n}
\left (
\begin{aligned}
M^{\hat \phi}(\bar u + \tau_n h- D[\zeta_\psi D^{-1}\bar \lambda_\psi] - \tau_n D[\zeta_\psi D^{-1}g_{n, \psi}])
&-
M^{\hat \phi}(\bar u - D[\zeta_\psi D^{-1}\bar \lambda_\psi])
\\
M_{\hat \psi}(\bar u + \tau_n h - D[\zeta^\phi D^{-1}\bar \lambda^\phi]
- \tau_n  D[\zeta^\phi D^{-1} g_{n}^\phi])
&-
M_{\hat \psi}(\bar u - D[\zeta^\phi D^{-1}\bar \lambda^\phi])
\end{aligned}
\right ).
\end{align*}
From the weak-star-to-strong continuity 
of functions of the type \eqref{eq:D_props}, 
we obtain that 
$D[\zeta_\psi D^{-1}g_{n, \psi}] \to D[\zeta_\psi D^{-1}g_\psi]$
and 
$D[\zeta^\phi D^{-1} g_{n}^\phi] \to D[\zeta^\phi D^{-1} g^\phi]$
in $L^1(L^1)$.
In view of the 
Hadamard directional differentiability 
properties of \smash{$M^{\hat \phi}$ and $M_{\hat \psi}$}
in  \eqref{eq:had_probs_unilatera},
the fact that 
\smash{$M^{\hat  \phi}$ and $M_{\hat \psi}$} vanish
a.e.\ in $Q \setminus V^\phi$, respectively, 
$Q \setminus V_\psi$, 
the inclusions 
$\overline{Q \cap V_\psi}, 
\overline{Q \cap V^\phi} \subset (0, T] \times \Omega$,
and the  Tietze extension theorem,
this yields
\begin{align*}
&\frac{F(M(\bar u) + \tau_n g_n, \bar u + \tau_n h) - F(M(\bar u),\bar u)}{\tau_n}
\\
&\weaklystar
\left (
\begin{aligned}
(M^{\hat \phi})'(\bar u - D[\zeta_\psi D^{-1}\bar \lambda_\psi]; 
h - D[\zeta_\psi D^{-1}g_\psi] 
)
\\
(M_{\hat \psi})'(\bar u - D[\zeta^\phi D^{-1}\bar \lambda^\phi];
h - D[\zeta^\phi D^{-1}g^\phi]
)
\end{aligned}
\right ) \quad \text{ in } \Lambda,
\end{align*}
as required in Assumption \ref{ass:standing:abstract}\ref{ass:standing:abstract:iv}c).
The differentiability assumption on $G$ is checked completely analogously,
using the properties of $S_{\hat \psi}$ and 
\smash{$S^{\hat \phi}$}
in \eqref{eq:had_probs_unilatera}. 
Having verified that
Assumption \ref{ass:standing:abstract} holds, we
now get from 
\cref{th:abstract_dir_diff}
that 
the map 
\[
S\colon B_r^U(\bar u) \to Y,
~
S(u):=
G(M(u), u)
=
\begin{pmatrix}
S^{\hat \phi}(u - D[\zeta_\psi D^{-1}\min(0, M_{\psi}^\phi(u))])(T)|_{\Omega^\phi}
\\
S_{\hat \psi}(u - D[\zeta^\phi D^{-1}\max(0, M_{\psi}^\phi(u))])(T)|_{\Omega_\psi}
\end{pmatrix},
\]
is directionally differentiable 
at $\bar u$ in all directions $h \in U$ in the sense of \eqref{eq:diff_claim}. 
Due to \cref{lem:first_mod,lem:mult_map_D},
we further know that
\begin{align*}
\begin{pmatrix}
S^{\hat \phi}(u - D[\zeta_\psi D^{-1}\min(0, M_{\psi}^\phi(u))])(T)|_{\Omega^\phi}
\\
S_{\hat \psi}(u - D[\zeta^\phi D^{-1}\max(0, M_{\psi}^\phi(u))])(T)|_{\Omega_\psi}
\end{pmatrix}
&=
\begin{pmatrix}
S^{\hat \phi}(u - \min(0, M_{\psi}^\phi(u)))(T)|_{\Omega^\phi}
\\
S_{\hat \psi}(u - \max(0, M_{\psi}^\phi(u)))(T)|_{\Omega_\psi}
\end{pmatrix}
\\
&=
\begin{pmatrix}
S_\psi^{\phi}(u)(T)|_{\Omega^\phi}
\\
S_{\psi}^\phi(u)(T)|_{\Omega_\psi}
\end{pmatrix}\quad \forall u \in B_r^U(\bar u).
\end{align*}
Since $\Omega_\psi \cup \Omega^\phi = \Omega$
holds
by our assumptions on $W_\psi$ and $W^\phi$, this shows that 
the function 
\smash{$L^\infty(L^2) \ni u \mapsto S_\psi^\phi(u)(T) \in L^1(\Omega)$} is directionally differentiable 
at $\bar u$ in all directions $h \in L^\infty(L^2)$.
This completes the proof. 
\end{proof}

Since we can always restrict the bilateral 
parabolic obstacle problem to 
a time interval $(0,s)$ with $0 < s < T$
and repeat the arguments of this section there, 
since $\bar u \in L^\infty(L^2)$ was an arbitrary given and fixed parameter, 
and since we have the Lipschitz continuity properties 
in \cref{lem:LipschitzPrimitiv,lem:Lipschitz},
we now obtain:

\begin{theorem}[differentiability for the bilateral 
parabolic obstacle problem]%
\label{th:main}%
Let $T$, $\Omega$, $Q$, $y_0$, $\psi$, and $\phi$
be as in Assumption \ref{ass:standing:obstacle}
and denote the solution map 
of the bilateral parabolic obstacle problem 
\eqref{eq:bilateral_obst_intro} by 
\begin{equation}
\label{eq:Sdef_again}
S_\psi^\phi\colon L^\infty(L^2) \to C(\bar Q) \cap L^2(H_0^1 \cap H^2)
\cap H^1(L^2),\qquad u \mapsto y.
\end{equation}
Then the function \smash{$S_\psi^\phi$} is Hadamard 
directionally differentiable in the sense that,
for all  $u, h \in L^\infty(L^2)$,
there exists $(S_\psi^\phi)'(u;h) \in L^\infty(Q) \cap L^2(H_0^1)$
such that 
\begin{equation}
\label{eq:main_claim}
\begin{aligned}
    &\{h_n\} \subset L^\infty(L^2), 
    \{\tau_n\} \subset \left (0,\infty \right ),
    h_n \to h \text{ in } L^\infty(L^2), 
    \tau_n \to 0
    \\
    &
    \Rightarrow
    ~
    \frac{S_\psi^\phi(u + \tau_n h_n) - S_\psi^\phi(u)}{\tau_n}
    \left \{
    \begin{aligned}
    &\to (S_\psi^\phi)'(u;h) \text{ in } L^p(Q) \text{ for all } 1 \leq p < \infty,
    \\
&\weaklystar (S_\psi^\phi)'(u;h) \text{ in } L^\infty(Q),
    \\
    &\weakly (S_\psi^\phi)'(u;h) \text{ in } L^2(H_0^1).
    \end{aligned}
    \right.
\end{aligned}
\end{equation}
\end{theorem}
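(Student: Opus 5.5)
Fix $u, h \in L^\infty(L^2)$ and sequences $h_n \to h$ in $L^\infty(L^2)$, $\tau_n \to 0$, and set $\delta_n := (S_\psi^\phi(u + \tau_n h_n) - S_\psi^\phi(u))/\tau_n$. The plan is to deduce pointwise-in-time convergence from \cref{prop:dir_diff}, upgrade it to convergence in $L^p(Q)$ by uniform bounds, and identify the weak limits by uniqueness of limits.

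\emph{Step 1: pointwise-in-time limit.} Since $\bar u$ in Assumption \ref{ass:standing:obstacle} is arbitrary, we apply \cref{prop:dir_diff} with $\bar u := u$. We also restrict the EVI to $(0,s)$ for each $s \in (0,T]$. The restricted problem again satisfies Assumption \ref{ass:standing:obstacle} with $T$ replaced by $s$, because restricting $u$, $\psi$, $\phi$ preserves all hypotheses, and its solution is the restriction of $S_\psi^\phi(u)$ by uniqueness (\cref{prop:ex_uniqu}). Hence, for every $s \in (0,T]$, the quotients $(S_\psi^\phi(u+\tau h)(s) - S_\psi^\phi(u)(s))/\tau$ converge in $L^1(\Omega)$ to a limit, which we call $(S_\psi^\phi)'(u;h)(s)$. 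At $s=0$ the quotient is zero. To pass from the fixed direction $h$ to $h_n$ we use \cref{cor:LipCL1}:
\[
\Bigl\|\frac{S_\psi^\phi(u+\tau_n h_n)(s)-S_\psi^\phi(u+\tau_n h)(s)}{\tau_n}\Bigr\|_{L^1(\Omega)} \le \|h_n - h\|_{L^1(L^1)} \to 0,
\]
uniformly in $s$. Thus $\delta_n(s) \to (S_\psi^\phi)'(u;h)(s)$ in $L^1(\Omega)$ for every $s \in [0,T]$.

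\emph{Step 2: uniform bounds.} By \cref{lem:Lipschitz},
\[
\|\delta_n\|_{C(\bar Q)} \le \hat C \|h_n\|_{L^\infty(L^2)} \le C.
\]
By \cref{lem:LipschitzPrimitiv}, $\|\delta_n\|_{L^2(H_0^1)} \le C$. The bound $\|\delta_n(s)\|_{L^1(\Omega)} \le C$ for all $s$, together with the pointwise convergence of Step 1, gives $\delta_n \to (S_\psi^\phi)'(u;h)$ in $L^1(Q)$ by dominated convergence in $s$. The measurability of the limit in $s$ is inherited as a pointwise limit of $C(L^1)$-functions. Since the family is bounded in $L^\infty(Q)$, interpolating between $L^1$ and $L^\infty$ yields convergence in $L^p(Q)$ for all $1 \le p < \infty$. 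The limit lies in $L^\infty(Q)$ by weak-star lower semicontinuity.

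\emph{Step 3: weak limits.} Any weak-star accumulation point in $L^\infty(Q)$, and any weak accumulation point in $L^2(H_0^1)$, must coincide with the $L^1(Q)$-limit. Boundedness plus the standard subsequence-subsequence argument then gives the full convergences in \eqref{eq:main_claim}, and also $(S_\psi^\phi)'(u;h) \in L^\infty(Q) \cap L^2(H_0^1)$. The limit depends only on $h$, since Step 1 showed that it is independent of the choice of $\{h_n\}$ and $\{\tau_n\}$.

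\emph{Main obstacle.} The delicate point is Step 1. One must check carefully that the truncated problem on $(0,s)$ fits Assumption \ref{ass:standing:obstacle}, in particular that the compactness and localization in \cref{lem_disjoint_contact_sets} work on the shorter cylinder, where the contact sets are now in $(0,s]\times\Omega$. One must also check that \cref{prop:dir_diff} gives convergence along arbitrary $\tau \to 0$ and not only along a sequence. Both should follow directly from the statements as given.
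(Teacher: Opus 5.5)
Your proposal is correct and follows essentially the same route as the paper: you apply \cref{prop:dir_diff} to the restrictions of the problem to $(0,s)$, use the Lipschitz bounds of \cref{lem:LipschitzPrimitiv,lem:Lipschitz} with dominated convergence to upgrade the slice-wise $L^1(\Omega)$-convergence to the convergences in \eqref{eq:main_claim}, and identify the weak limits by uniqueness. The only cosmetic difference is that you get the Hadamard property directly from the uniform-in-$s$ estimate of \cref{cor:LipCL1}, whereas the paper cites \cite[Proposition~2.49]{BonnansShapiro2000}; this amounts to the same argument.
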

\begin{proof}
From \cref{prop:dir_diff}, 
applied to restrictions of  
\eqref{eq:bilateral_obst_intro} to arbitrary 
intervals $(0,s)$, $0 < s \leq T$, we obtain that
the functions
\smash{$L^\infty(L^2) \ni u \mapsto S_\psi^\phi(u)(s) \in L^1(\Omega)$}
are directionally differentiable for all $s \in [0,T]$.
Due to the Lipschitz estimates in 
\cref{lem:LipschitzPrimitiv,lem:Lipschitz},
we further know that 
difference quotients of the function $S_\psi^\phi$
in \eqref{eq:Sdef_again} are bounded 
in $C(\bar Q) \cap L^2(H_0^1)$. 
Using the dominated convergence theorem 
and the theorem of Banach--Alaoglu, 
it now follows immediately that, for $h_n = h$,
the difference quotients in \eqref{eq:main_claim} 
converge as asserted. That one also has Hadamard directional 
differentiability, i.e., \eqref{eq:main_claim}
without the assumption $h_n = h$ for all $n \in \N$,
follows from the Lipschitz estimates
in \cref{lem:LipschitzPrimitiv,lem:Lipschitz} and 
\cite[Proposition~2.49]{BonnansShapiro2000}.
This completes the proof. 
\end{proof}

We remark that, in contrast to the elliptic case, 
it is not straightforward 
to derive an auxiliary EVI that uniquely characterizes 
the directional derivatives 
of the map $S_\psi^\phi$ in \eqref{eq:Sdef_again};
cf.\ \cite[Théorème 3.3]{Mignot1976},
\cite[Theorem 2]{Haraux1977},
and \cite[Theorem 4.1]{Christof2019}.
The results 
for vectorial sweeping processes obtained in 
\cite[Theorem 1.1]{BrokateChristof2026} suggest
that it might be possible to obtain a unique 
characterization of the functions 
$(S_\psi^\phi)'(u;h)$ by employing a 
Banach-space-valued version of 
Kurzweil--Stieltjes integration theory.
It is currently unknown, however, 
whether this approach indeed works
for \eqref{eq:bilateral_obst_intro}.

\bibliographystyle{siamplain}
\bibliography{references}

\end{document}